\documentclass[a4paper,oneside,11pt]{article} 

\addtolength\topmargin{-.5in} 
\addtolength\textheight{1.in} 
\addtolength\oddsidemargin{-.045\textwidth} 
\addtolength\textwidth{.09\textwidth} 


\tolerance=1414 
\setlength\emergencystretch{1.5em} 
\hbadness=1414 
\setlength\hfuzz{4pt} 
\widowpenalty=10000 
\raggedbottom 
\setlength\vfuzz{3pt} 

\usepackage[utf8]{inputenc} 
\usepackage[T1]{fontenc} 
\usepackage{textcomp} 
\usepackage[english]{babel} 
\usepackage{indentfirst}
\usepackage[autolanguage]{numprint} 
\usepackage{hyperref} 
\hypersetup{colorlinks=true,linkcolor=blue,citecolor=red,urlcolor=blue}
\usepackage{graphicx} 
\usepackage{verbatim} 
\usepackage{makeidx} 
\usepackage[refpage]{nomencl} 
\usepackage{enumitem} 
\usepackage{tikz} 
\usetikzlibrary{matrix,arrows,patterns}
\usepackage{multicol}
\usepackage{fullpage}
\usepackage{comment}
\usepackage{stmaryrd}
\usepackage{mathrsfs}
\usepackage{ulem}

\newcommand\mscriptsize[1]{\mbox{\scriptsize\ensuremath{#1}}}



\usepackage{amsmath,amssymb,amsthm,amsfonts} 
\usepackage[all]{xy} 

\numberwithin{equation}{section}


\newcounter{Main}

\theoremstyle{plain} 


\swapnumbers 
\theoremstyle{definition} 
\newtheorem{Def}{Definition}[section] 
\newtheorem{Def,Thm}[Def]{Definition and theorem} 
\newtheorem{Def,Prop}[Def]{Proposition-definition} 


\theoremstyle{plain} 
\newtheorem{Proposition}[Def]{Proposition} 
\newtheorem{Lemma}[Def]{Lemma} 
\newtheorem{Theorem}[Def]{Theorem} 
\newtheorem{Corollary}[Def]{Corollary} 
\theoremstyle{remark} 
\newtheorem{Example}[Def]{Example} 
\newtheorem{Remark}[Def]{Remark} 

\usepackage{xcolor}
\usepackage[all]{xy}
\newcommand{\V}{\mathrm{u}_{2\mathtt{a}}}
\newcommand{\U}{\mathrm{u}_{\mathtt{a}}}

\newcommand{\X}{\mathscr{X}}
\newcommand{\Aa}{\mathscr{A}}

\newcommand{\Conv}{\mathop{\scalebox{1.5}{\raisebox{-0.2ex}{$\ast$}}}}

\newcommand\bmattrix[3]{\textnormal{\scriptsize$\left(\begin{array}{c}#1\\#2\\#3\end{array}\right)$\normalsize}}
\newcommand\sbmattrix[9]{\textnormal{\scriptsize$\left(\begin{array}{ccc}#1&#2&#3\\#4&#5&#6\\#7&#8&#9\end{array}\right)$\normalsize}}

\newcommand\ssbmatrix[4]{\textnormal{\scriptsize$\left(\begin{array}{cc}#1&#2\\#3&#4\end{array}\right)$\normalsize}}

\title{On the homology of special unitary groups \\ over polynomial rings} 
\author{Claudio Bravo\footnote{Instituto de Matem\'aticas, Universidad de Talca, Talca, Chile.
Email address: claudio.bravo@utalca.cl.}} 

\date{}


\begin{document} 

\maketitle

\begin{abstract}
In this work, we answer the homotopy invariance question for the ``smallest'' non-isotrivial group-scheme over $\mathbb{P}^1$, obtaining a result, which is not contained in previous works due to Knudson and Wendt.
More explicitly, let $\mathcal{G}=\mathrm{SU}_{3,\mathbb{P}^1}$ be the (non-isotrivial) non-split group-scheme over $\mathbb{P}^1$ defined from the standard (isotropic) hermitian form in three variables.
In this article, we prove that there exists a natural homomorphism $\mathrm{PGL}_2(F) \to \mathcal{G}(F[t])$ that induces isomorphisms $H_*(\mathrm{PGL}_2(F), \mathbb{Z}) \to H_*(\mathcal{G}(F[t]), \mathbb{Z})$.
Then we study the rational homology of $\mathcal{G}(F[t,t^{-1}])$, by previously describing suitable fundamental domains for certain arithmetic subgroups of $\mathcal{G}$.\\
\textbf{MSC codes:} primary 20G10, 20G30, 20E08; secondary 11E57, 20F65.\\
\textbf{Keywords:} Homology, special unitary groups, arithmetic subgroups and Bruhat-Tits trees.
\end{abstract}

\section{Introduction}\label{section introduction}

The fundamental theorem of algebraic K-theory states that for each regular ring $R$ there is a natural isomorphism between the $i$-th K-theory groups $K_i(R[t]) \cong K_i(R)$, for all $i\geq 0$ (cf. \cite[Th. 8, \S 6, Ch. 8]{Ktheory}).
In a more general language, a presheaf $\mathcal{F}$ on the category of schemes over a base $\mathcal{S}$ is called homotopy invariant whenever $\mathcal{F}(X \times_{\mathcal{S}} \mathbb{A}^1_\mathcal{S})\cong \mathcal{F}(X)$, for any scheme $X$ over $\mathcal{S}$.
These homotopy invariant presheaves have been studied and applied to several domains that go beyond K-theory (cf. \cite{SuslinKtheory1,SuslinVoevodsky,Voevodsky}).

Since the first K-theory group of a ring $R$ is isomorphic to $\mathrm{GL}(R)^{\mathrm{ab}}=H_1\big( \mathrm{GL}(R), \mathbb{Z} \big)$, where $\mathrm{GL}(R)$ is the limit direct $\varinjlim \mathrm{GL}_n(R)$ given by the inclusions
$\mathrm{GL}_n(R) \hookrightarrow \mathrm{GL}_{n+1}(R)$, $A \mapsto \ssbmatrix{A}{0}{0}{1},$ the fundamental theorem of K-theory implies that
$H_1 \big( \mathrm{GL}(R), \mathbb{Z} \big) \cong H_1 \big( \mathrm{GL}(R[t]), \mathbb{Z} \big).$
The existence of homotopy invariance results in K-theory motivated the research on unstable versions for the homology of algebraic groups.
In the latter, the homotopy invariance question asks under which conditions, on a linear algebraic group $\mathcal{G}$ and a ring $R$, the group homomorphism $\mathcal{G}(R) \to \mathcal{G}(R[t])$ induces isomorphisms:
$$H_*\big(\mathcal{G}(R),M\big) \xrightarrow{\cong} H_*\big(\mathcal{G}(R[t]),M\big),$$
in the homology of the involved groups.

In the current literature, there exist some interesting results on this problem.
On the one hand, assuming that $F$ is a finite field of characteristic $p>0$ and that $\mathcal{G}$ is a semisimple simply connected split $F$-group, Soul\'e gives in \cite{Soulé} a positive answer to the question of homotopy invariance when $M$ is a field with $\mathrm{char}(M) \neq 0$ and $(\mathrm{char}(M),p)=1$.
In order to prove this result, Soul\'e studies the action $\mathcal{G}(F[t])$ on the Bruhat–Tits building $X\big(\mathcal{G}, F(\!(t^{-1})\!)\big)$ by determining a fundamental domain of this action.
On the other hand, assuming that $F$ is a field of characteristic $0$, Knudson in \cite{Knudson1} obtains various interesting results on the integral homology of $\mathrm{SL}_2$.
The author, for instance, proves the following result:

\begin{Theorem}\cite[Th. 1.3]{Knudson1}\label{teo knudson hom equiv}
Let $F$ be a field with $\mathrm{char}(F)=0$. The canonical inclusion $\mathrm{SL}_2(F) \hookrightarrow \mathrm{SL}_2(F[t])$ induces isomorphisms $H_*\big(\mathrm{SL}_2(F),\mathbb{Z}\big) \xrightarrow{\cong} H_*\big(\mathrm{SL}_2(F[t]),\mathbb{Z}\big)$.
\end{Theorem}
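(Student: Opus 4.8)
The plan is to present $\mathrm{SL}_2(F[t])$ as an amalgamated product through its action on the Bruhat--Tits tree of $\mathrm{SL}_2\big(F(\!(t^{-1})\!)\big)$, and then to reduce the whole statement to a homotopy invariance property of the Borel subgroup. By Nagao's theorem (cf.\ Serre, \emph{Trees}, II.1.6) one has
$$\mathrm{SL}_2(F[t]) \;=\; \mathrm{SL}_2(F)\,\ast_{B(F)}\,B(F[t]),$$
where $B$ is the Borel subgroup of upper triangular matrices, so that $B(R)=\big\{\ssbmatrix{a}{b}{0}{a^{-1}}: a\in R^\times,\ b\in R\big\}\cong R\rtimes R^\times$ with the torus $R^\times$ acting on the additive group $R$ by $a\cdot b=a^2b$, and the edge group is the inclusion $B(F)\hookrightarrow B(F[t])$. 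The associated Mayer--Vietoris sequence (equivalently, the equivariant homology of the tree, with trivial coefficients) is
$$\cdots\to H_n(B(F),\mathbb{Z})\to H_n(\mathrm{SL}_2(F),\mathbb{Z})\oplus H_n(B(F[t]),\mathbb{Z})\to H_n(\mathrm{SL}_2(F[t]),\mathbb{Z})\to H_{n-1}(B(F),\mathbb{Z})\to\cdots$$

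The heart of the argument is the claim that $B(F)\hookrightarrow B(F[t])$ induces an isomorphism on integral homology. To prove this I would run the Lyndon--Hochschild--Serre spectral sequence of the split extension $1\to R\to B(R)\to R^\times\to 1$ for $R\in\{F,F[t]\}$, namely $E^2_{p,q}=H_p\big(R^\times,H_q(R,\mathbb{Z})\big)\Rightarrow H_{p+q}(B(R),\mathbb{Z})$. This is where $\mathrm{char}(F)=0$ is used decisively: the additive group of $R$ is then a $\mathbb{Q}$-vector space, hence $H_q(R,\mathbb{Z})$ is again a $\mathbb{Q}$-vector space (e.g.\ $H_q(R,\mathbb{Z})\cong\Lambda^q_{\mathbb{Q}}R$, by the K\"unneth formula on finite-dimensional subspaces and passage to the colimit), on which the element $2\in\mathbb{Q}^\times\subseteq F^\times\subseteq R^\times$ of the torus acts by the rational scalar $2^{2q}=4^q$. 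Now $2$ is central in the abelian group $R^\times$, so multiplication by $2$ on the module $H_q(R,\mathbb{Z})$ induces the identity on each $H_p\big(R^\times,H_q(R,\mathbb{Z})\big)$ (a central element of a group acts trivially on its homology with arbitrary module coefficients); but it also induces multiplication by $4^q$. For $q\geq 1$ the rational number $4^q-1$ is nonzero, hence invertible on the $\mathbb{Q}$-vector space $H_p\big(R^\times,H_q(R,\mathbb{Z})\big)$, which therefore vanishes. Both spectral sequences thus collapse onto their bottom rows, giving natural isomorphisms $H_*(B(R),\mathbb{Z})\xrightarrow{\ \cong\ }H_*(R^\times,\mathbb{Z})$ compatible with the torus sections $R^\times\hookrightarrow B(R)$; since the torus is the same for $R=F$ and $R=F[t]$, the claim follows.

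Feeding this back into the Mayer--Vietoris sequence, the map $H_n(B(F),\mathbb{Z})\to H_n(B(F[t]),\mathbb{Z})$ is an isomorphism, so $H_n(B(F),\mathbb{Z})\to H_n(\mathrm{SL}_2(F),\mathbb{Z})\oplus H_n(B(F[t]),\mathbb{Z})$ is (split) injective, all connecting maps vanish, and a short diagram chase on the resulting split short exact sequences shows that the inclusion-induced map $H_n(\mathrm{SL}_2(F),\mathbb{Z})\to H_n(\mathrm{SL}_2(F[t]),\mathbb{Z})$ is an isomorphism for every $n$. I expect the only real obstacle to be the Borel claim, that is, controlling the higher homology of the huge abelian unipotent radical $F[t]$ as a module over the torus $F^\times$; this is precisely, and solely, where $\mathrm{char}(F)=0$ is needed (in characteristic $p$ the group $F[t]$ is an $\mathbb{F}_p$-vector space, its homology carries essential $p$-torsion, and the statement genuinely breaks down, consistently with Soul\'e's restriction on the coefficient field).
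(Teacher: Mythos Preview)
Your proposal is correct and follows essentially the same approach as the paper's account of Knudson's proof: Nagao's amalgam $\mathrm{SL}_2(F[t])\cong \mathrm{SL}_2(F)\ast_{B(F)} B(F[t])$, the Mayer--Vietoris sequence, and the Hochschild--Serre spectral sequence for $B(R)\cong F^\times\ltimes R$ collapsed via the ``center kills'' argument (the element $2\in F^\times$ acting by $4^q$ on $H_q(R,\mathbb{Z})\cong\Lambda^q R$, with the $\mathbb{Q}$-vector space structure forcing vanishing for $q>0$). This is exactly the argument the paper sketches after the statement and carries out in detail in its own Lemma~\ref{lemma inv hom 2} for the $\mathrm{SU}_3$ analogue.
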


The proof of Th. \ref{teo knudson hom equiv} is essentially written in the next $2$ steps.
First, Knudson describes the (homological) Mayer-Vietoris sequence defined by Nagao's decomposition $\mathrm{SL}_2(F[t]) \cong \mathrm{SL}_2(F) \ast_{B(F)} B(F[t])$, where $B$ is the subgroup of upper triangular matrices in $\mathrm{SL}_2$.
Then he reduces the homology groups $H_*\big(B(F), \mathbb{Z}\big)$ and $H_*\big(B(F[t]), \mathbb{Z}\big)$ to $H_*(F^{*}, \mathbb{Z})$
by using the Hochschild-Serre spectral sequences associated to the decomposition $B(R') \cong F^* \ltimes R'$, for $R'=F[t]$ and $R'=F$.

Since the algebraic K-theory satisfies $K_i(R[t,t^{-1}]) \cong K_i(R) \oplus K_{i-1}(R)$, for all $i>0$, it is also interesting to investigate unstable versions for the homology of algebraic group.
Going to this question, Knudson in \cite{Knudson1} describes the integral homology of $\mathrm{SL}_2(F[t,t^{-1}])$ via the following result:

\begin{Theorem}\cite[\S 5]{Knudson1}\label{teo knudson hom equiv for SL2(F[t,t-1])}
If $F$ is a field with $\mathrm{char}(F)=0$, then there is an exact sequence:
$$\cdots \to H_n\big(F^*, \mathbb{Z} \big) \to H_n\big(\mathrm{SL}_2(F), \mathbb{Z} \big) \oplus H_n\big(\mathrm{SL}_2(F),\mathbb{Z} \big) \to H_n\big(\mathrm{SL}_2(F[t,t^{-1}]), \mathbb{Z}\big) \to \cdots $$
\end{Theorem}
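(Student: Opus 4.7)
The plan is to adapt Knudson's strategy from Theorem~\ref{teo knudson hom equiv} to the localized ring $F[t,t^{-1}]$, replacing Nagao's amalgamation by a Bass-Serre decomposition suited to $\mathrm{SL}_2(F[t,t^{-1}])$ and then running a Mayer-Vietoris argument.

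First I would consider the action of $\mathrm{SL}_2(F[t,t^{-1}])$ on the Bruhat-Tits tree $T$ of $\mathrm{SL}_2(F(\!(t)\!))$. Since $F[t,t^{-1}]\cap F[[t]]=F[t]$, the stabilizer of the standard vertex $v_0$ is $\mathrm{SL}_2(F[t])$. Because $\mathrm{SL}_2$ preserves the canonical bipartition of $T$ while $\mathrm{diag}(t,t^{-1})\in\mathrm{SL}_2(F[t,t^{-1}])$ translates a chosen apartment by two edges, a determinant computation shows that $v_0$ and its neighbour $v_1$ lie in distinct orbits, reducing the quotient graph to a single edge with two vertex orbits. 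Bass-Serre theory then yields the amalgamated product
\begin{equation*}
\mathrm{SL}_2(F[t,t^{-1}]) \;\cong\; \mathrm{SL}_2(F[t]) \ast_E \mathrm{SL}_2(F[t]),
\end{equation*}
where the two vertex stabilizers are abstractly isomorphic to $\mathrm{SL}_2(F[t])$ (the second one arising via conjugation by the element $\mathrm{diag}(t^{-1},1)\in\mathrm{GL}_2(F(t))\setminus\mathrm{SL}_2(F(t))$), and where the edge stabilizer $E\subseteq\mathrm{SL}_2(F[t])$ is identified with the preimage of $B(F)$ under reduction $\mathrm{SL}_2(F[t])\to\mathrm{SL}_2(F)$ at $t=0$.

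Next, the associated Mayer-Vietoris long exact sequence reads
\begin{equation*}
\cdots \to H_n(E,\mathbb{Z}) \to H_n(\mathrm{SL}_2(F[t]),\mathbb{Z})^{\oplus 2} \to H_n(\mathrm{SL}_2(F[t,t^{-1}]),\mathbb{Z}) \to H_{n-1}(E,\mathbb{Z}) \to \cdots,
\end{equation*}
and Theorem~\ref{teo knudson hom equiv} immediately replaces each factor $H_n(\mathrm{SL}_2(F[t]),\mathbb{Z})$ by $H_n(\mathrm{SL}_2(F),\mathbb{Z})$. Since the two embeddings $E\hookrightarrow\mathrm{SL}_2(F[t])$ coincide on the diagonal torus (conjugation by $\mathrm{diag}(t^{-1},1)$ fixes $T$ pointwise), the resulting map $H_n(F^*,\mathbb{Z})\to H_n(\mathrm{SL}_2(F),\mathbb{Z})^{\oplus 2}$ takes the antidiagonal form $(j_*,-j_*)$. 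To obtain the statement it therefore suffices to prove that the inclusion of the diagonal torus $F^*\hookrightarrow E$ induces isomorphisms $H_n(F^*,\mathbb{Z})\xrightarrow{\cong} H_n(E,\mathbb{Z})$ for every $n$.

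This identification of $H_*(E,\mathbb{Z})$ with $H_*(F^*,\mathbb{Z})$ is the main obstacle, since $E$ is strictly larger than either $B(F)$ or $B(F[t])$. I would chain two Hochschild-Serre spectral sequences: one for the extension $1\to\Gamma(t)\to E\to B(F)\to 1$ (where $\Gamma(t)$ is the principal level-$t$ congruence subgroup of $\mathrm{SL}_2(F[t])$), and a second for $1\to F\to B(F)\to F^*\to 1$ (the argument used implicitly in the proof of Theorem~\ref{teo knudson hom equiv}). To control the first spectral sequence I would filter $\Gamma(t)\supset\Gamma(t^2)\supset\cdots$ by principal congruence subgroups; in characteristic zero each successive quotient is a uniquely divisible abelian group isomorphic to $\mathfrak{sl}_2(F)$, decomposing under the adjoint action of $F^*$ into weight spaces of weights $\pm 2$ and $0$. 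The weight-zero line $F\cdot h$ is not killed by $F^*$ alone, but the unipotent subgroup $F\subset B(F)$ mixes it with the weight $+2$ line via $u\cdot h=h-2be$, which together with the $F^*$-action forces $H_p(B(F),\mathfrak{sl}_2(F))=0$ for all $p$. A careful induction on the filtration, combined with a convergence argument in characteristic zero, should promote this vanishing to all of $H_q(\Gamma(t),\mathbb{Z})$ for $q\geq 1$, collapsing the spectral sequence onto $q=0$ and yielding $H_*(E,\mathbb{Z})\cong H_*(B(F),\mathbb{Z})\cong H_*(F^*,\mathbb{Z})$. Making this vanishing rigorous beyond the level of the graded pieces is the delicate technical point where the characteristic zero hypothesis is essential.
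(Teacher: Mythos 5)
Your global skeleton is the right one, and it matches the route this paper attributes to Knudson (and mirrors for $\mathrm{SU}_3$ in \S\ref{section amalgam}--\ref{section homology of Hecke}): decompose $\mathrm{SL}_2(F[t,t^{-1}])\cong \mathrm{SL}_2(F[t])\ast_{\Gamma_0(t)}\mathrm{SL}_2(F[t])$ with edge group the Hecke subgroup $\Gamma_0(t)$ (preimage of $B(F)$ under $t\mapsto 0$), run Mayer--Vietoris, and use Theorem~\ref{teo knudson hom equiv} on the two vertex groups, so that everything reduces to $H_*\big(\Gamma_0(t),\mathbb{Z}\big)\cong H_*\big(F^*,\mathbb{Z}\big)$. (A minor caveat: edge-transitivity of the action on the tree of $\mathrm{SL}_2\big(F(\!(t)\!)\big)$ is not a ``determinant computation''; it comes from density of $\mathrm{SL}_2(F[t,t^{-1}])$ in the local group, i.e.\ strong approximation, exactly as in Lemma~\ref{lemma fund domain for H}. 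Your bipartition/translation remarks only separate the two vertex orbits.)

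The genuine gap is your proof of the key lemma $H_*\big(E,\mathbb{Z}\big)\cong H_*\big(F^*,\mathbb{Z}\big)$. The congruence filtration $\Gamma(t)\supset\Gamma(t^2)\supset\cdots$ does not control the homology of the \emph{discrete} group $\Gamma(t)$: $\Gamma(t)$ is not the (inverse) limit of its congruence quotients, and there is no spectral sequence or ``convergence argument'' that passes from the graded pieces $\Gamma(t^n)/\Gamma(t^{n+1})\cong\mathfrak{sl}_2(F)$ to the $B(F)$-module $H_q\big(\Gamma(t),\mathbb{Z}\big)$. Concretely, $\Gamma(t)$ is a free product of infinitely many copies of the additive group $tF[t]$, one for each point of $\mathbb{P}^1(F)$ (this is the $\mathrm{SL}_2$ analogue of Corollary~\ref{coro amal K0}), so $H_1\big(\Gamma(t),\mathbb{Z}\big)\cong\bigoplus_{x\in\mathbb{P}^1(F)}tF[t]$ with $B(F)$ permuting the summands --- a module your filtration heuristic cannot see, and whose $B(F)$-homology is precisely the hard point you leave open (as you acknowledge). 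The way this step is actually closed --- by Knudson, and in this paper for the $\mathrm{SU}_3$ analogue --- is to let $\Gamma_0(t)$ act on the \emph{second} tree, that of $\mathrm{SL}_2\big(F(\!(t^{-1})\!)\big)$: the standard apartment is a fundamental domain (cf.\ Corollary~\ref{coro fund reg for H01}), Bass--Serre theory gives $\Gamma_0(t)\cong B(F[t])\ast_{F^*}B(F[t])$ (cf.\ Theorem~\ref{teo amal dec for hecke}), and Mayer--Vietoris combined with $H_*\big(B(F[t]),\mathbb{Z}\big)\cong H_*\big(F^*,\mathbb{Z}\big)$ (Knudson's Prop.~3.2; the analogue here is Prop.~\ref{prop inv hom for Bst}) yields $H_*\big(\Gamma_0(t),\mathbb{Z}\big)\cong H_*\big(F^*,\mathbb{Z}\big)$. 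With that lemma established, the rest of your argument does go through.
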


In order to prove Th. \ref{teo knudson hom equiv for SL2(F[t,t-1])}, Knudson describes the action of $\tilde{\Gamma}:=\mathrm{SL}_2(F[t,t^{-1}])$ on the product of the Bruhat-Tits trees of $\mathrm{SL}_2\big(F((t^{\epsilon}))\big)$, for $\epsilon\in \lbrace 1,-1\rbrace$.
In this way, he obtains the amalgamated product $\tilde{\Gamma} \cong \Gamma \ast_{\Gamma_0} \Gamma$, where $\Gamma=\mathrm{SL}_2(F[t])$ and $\Gamma_0$ is the Hecke congruence subgroup
$$\Gamma_0=\Gamma_0(t)=\left\lbrace \ssbmatrix{a}{b}{tc}{d} \big\vert a,b,c,d \in F[t], \, ad-tcb=1 \right\rbrace.$$
Then, Knudson proves that $H_*(\Gamma, \mathbb{Z}) \cong H_*(F^*,\mathbb{Z})$.
Thus, Th. \ref{teo knudson hom equiv for SL2(F[t,t-1])} essentially follows from the Mayer-Vietoris sequence defined by $\tilde{\Gamma} \cong \Gamma \ast_{\Gamma_0} \Gamma$.
Then, applying Th. \ref{teo knudson hom equiv for SL2(F[t,t-1])} and \cite[Th. 2.1]{BorelYangContCoh}, Knudson proves in \cite[Th. 5.1]{Knudson1} that
if $F$ is a number field with $r$ (resp. $s$) real embeddings (resp. conjugate pairs of complex embeddings), then there are (natural) isomorphisms $H_p\big(\mathrm{SL}_2(F[t,t^{-1}]),\mathbb{Q}\big) \xrightarrow{\cong} H_{p-1}(F^{*},\mathbb{Q})$, for all $p\geq 2r+3s+2$.

Going to the case of higher rank algebraic $F$-groups, Knudson in \cite{Knudson2} extends the Soul\'e’s approach and deduces various homotopy invariance results for $\mathrm{SL}_n$ over arbitrary infinite fields $F$ and $M=\mathbb{Z}$.
These results generalize both Th. \ref{teo knudson hom equiv} and Th. \ref{teo knudson hom equiv for SL2(F[t,t-1])}.

Still in the context of algebraic $F$-groups, 
one of the strongest existing results on the homotopy invariance question was proved by 
Wendt in \cite{Wendt}. It states the following:

\begin{Theorem}\cite[Theo. 1.1]{Wendt}\label{teo wendt isotrivial}
Let $F$ be an infinite field and let $\mathcal{G}$ be a connected reductive smooth linear algebraic $F$-group.
The canonical inclusion $F \hookrightarrow F[t]$ induces isomorphisms $H_*\big(\mathcal{G}(F),\mathbb{Z}\big) \xrightarrow{\cong} H_*\big(\mathcal{G}(F[t]),\mathbb{Z}\big)$ if the order of the fundamental group of $\mathcal{G}$ is invertible in $F$.
\end{Theorem}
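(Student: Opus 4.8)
The plan is to recast the statement within Morel--Voevodsky $\mathbb{A}^1$-homotopy theory. For any commutative ring $R$, the integral group homology $H_*(\mathcal{G}(R),\mathbb{Z})$ is the homology of the classifying space $B\mathcal{G}(R)$ --- the simplicial set obtained by evaluating at $\mathrm{Spec}(R)$ the simplicial presheaf $B\mathcal{G}$ whose $n$-simplices form $\mathcal{G}^{\times n}$ --- and the homomorphism induced by $F\hookrightarrow F[t]$ is exactly the one coming, on sections, from the structure morphism $\mathbb{A}^1_F\to\mathrm{Spec}(F)$. So Theorem~\ref{teo wendt isotrivial} says that $B\mathcal{G}(F)\to B\mathcal{G}(F[t])$ is a homology equivalence, and the natural way to attack this is to analyse the presheaf $B\mathcal{G}$, and the Nisnevich classifying sheaf $B_{\mathrm{Nis}}\mathcal{G}$, through the $\mathbb{A}^1$-localization functor, using Morel's machinery: strictly $\mathbb{A}^1$-invariant sheaves, the connectivity and strict-invariance theorems, and the description of the $\mathbb{A}^1$-homotopy type of $B_{\mathrm{Nis}}\mathcal{G}$ for $\mathcal{G}$ reductive. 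The hypothesis that $F$ is infinite enters through the affine-representability and Bass--Quillen inputs below; over finite fields homotopy invariance simply fails, as the action of $\mathcal{G}(F[t])$ on the Bruhat--Tits building of $\mathcal{G}(F(\!(t^{-1})\!))$ already shows.

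Next I would run a structure-theoretic d\'evissage. For a torus $T$ one has $T(F[t])=T(F)$ on the nose: the units of a polynomial ring over a field are constant, and this persists after a finite separable base change and passage to Galois invariants, so the statement is vacuous there. For $\mathcal{G}$ semisimple and \emph{simply connected} the statement is of Soul\'e--Knudson type: if $\mathcal{G}$ is $F$-anisotropic then $\mathcal{G}(F[t])=\mathcal{G}(F)$, because the $\mathcal{G}(F[t])$-action on the Bruhat--Tits building of $\mathcal{G}(F(\!(t^{-1})\!))$ has a fixed point; and if $\mathcal{G}$ is $F$-isotropic one uses the $BN$-pair over $F(\!(t^{-1})\!)$ together with a Nagao-type amalgam decomposition of $\mathcal{G}(F[t])$, running the Mayer--Vietoris and Hochschild--Serre bookkeeping exactly as in the proof of Theorem~\ref{teo knudson hom equiv}. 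The general reductive case is then assembled from these two extremes via the canonical central extension $1\to\mu\to\mathcal{G}^{\mathrm{sc}}\times Z^{\circ}\to\mathcal{G}\to1$ --- with $Z^{\circ}$ the radical torus, $\mathcal{G}^{\mathrm{sc}}$ the simply connected cover of the derived group, and $\mu$ a finite group scheme of multiplicative type such that $|\mu|=|\pi_1(\mathcal{G})|$ --- by comparing the Hochschild--Serre spectral sequences of this extension for $\mathcal{G}(F)$ and for $\mathcal{G}(F[t])$.

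The main obstacle, and the place where the hypothesis is indispensable, is this last comparison. The maps $\mathcal{G}^{\mathrm{sc}}(R)\times Z^{\circ}(R)\to\mathcal{G}(R)$ need not be surjective, their defect being controlled by $H^1_{\mathrm{fppf}}(R,\mu)$, and one must show that this defect --- and its effect on homology --- is matched for $R=F$ and $R=F[t]$ \emph{integrally}, not merely rationally. Invertibility of $|\mu|=|\pi_1(\mathcal{G})|$ in $F$ is precisely what makes $\mu$ \'etale (so the relevant boundary maps become Kummer-theoretic and \'etale-cohomological) and what lets $|\mu|$ be inverted in the coefficient ring, so that corestriction/transfer arguments force the two spectral-sequence comparisons to agree. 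In $\mathbb{A}^1$-homotopic terms, the very same hypothesis guarantees that every Nisnevich-locally trivial $\mathcal{G}$-torsor over $\mathbb{A}^1_K$, for $K/F$ any field extension, is extended from $\mathrm{Spec}(K)$ --- an ``affine Bass--Quillen for $\mathcal{G}$-torsors'' statement that is the geometric heart of the argument and that genuinely fails without the hypothesis (already for $\mathcal{G}=\mathrm{PGL}_p$ over a field $F$ of characteristic $p$, where $\mathrm{Br}(F[t])$ may properly contain $\mathrm{Br}(F)$). Granting the torsor-extension statement, the $\mathbb{A}^1$-homotopy formalism upgrades it --- which, since $B_{\mathrm{Nis}}\mathcal{G}$ is not itself $\mathbb{A}^1$-local for reductive $\mathcal{G}$, still requires a further appeal to the strict $\mathbb{A}^1$-invariance of homology sheaves --- to the required homology isomorphism $H_*(B\mathcal{G}(F),\mathbb{Z})\to H_*(B\mathcal{G}(F[t]),\mathbb{Z})$; combined with the d\'evissage, this yields Theorem~\ref{teo wendt isotrivial}. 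I expect the torsor-extension input, and the integral (rather than rational) control of the central kernel, to be the technically heaviest points.
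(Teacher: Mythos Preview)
This theorem is not proved in the present paper: it is quoted from Wendt \cite[Theo.~1.1]{Wendt} as background in the introduction, alongside the earlier results of Soul\'e and Knudson, to motivate the author's extension to the non-isotrivial group scheme $\mathrm{SU}_{3,\mathbb{P}^1}$. The paper contains no argument for it and makes no claim to re-prove it; consequently there is no ``paper's own proof'' against which your proposal can be compared.

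For what it is worth, your sketch is broadly in the spirit of the actual strategy in \cite{Wendt}, which does proceed via $\mathbb{A}^1$-homotopy theory and an analysis of $\mathcal{G}$-torsors over $\mathbb{A}^1$; but assessing the details would require comparison with \cite{Wendt} itself rather than with the present paper, whose own contributions (Theorems~\ref{main teo 1}--\ref{main teo 4}) concern a different group and use Bruhat--Tits trees and explicit Mayer--Vietoris arguments rather than motivic homotopy theory.
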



Some positive answers for the invariance homotopy question on others arithmetic groups, different to $\mathcal{G}(F[t])$, are given in works as either \cite[Ch. 4]{KnudsonBook} or \cite{Hutchinson, Knusdonelementarygroups, Knudsonfinitefields}.
Going to the case of arbitrary regular rings, it follows from \cite{KrMc} that homotopy invariance does not work for $H_1$ over $\mathcal{G}(R[t])$, when $\mathrm{rk}(\mathcal{G})=1$ and $R$ is an integral domains which is not field.
In the same context, when $\mathrm{rk}(\mathcal{G})=2$, homotopy invariance fails for $H_2$ as discussed in \cite{Wendt2}.
It therefore seems that one cannot hope for an extension of the preceding results for arbitrary regular rings or even polynomial rings in more than $1$ variable.

Note that almost all previous results are specific to algebraic $F$-groups, also called isotrivial groups.
Thus, it is natural to seek for extensions to group schemes defined over projective algebraic $F$-curves, but not over $F$.
Since each split semisimple group has a $\mathbb{Z}$-model (called its Chevalley model), in order to study the homotopy invariance question in this new context, we have to go to the case of non-isotrivial quasi-split groups.
Indeed, this paper is devoted to present a first advance in this direction for the ``smallest'' non-isotrivial quasi-split group.
More specifically, in this article we extend Th. \ref{teo knudson hom equiv} and \ref{teo knudson hom equiv for SL2(F[t,t-1])} to a certain special unitary group $\mathrm{SU}_{3,\mathbb{P}^1_F}$ defined over $\mathbb{P}^1_F$, which does not have an $F$-structure (See \S \ref{section main results}).

Theorems \ref{teo knudson hom equiv} and \ref{teo wendt isotrivial} admit the following interpretation in terms of group actions.
Indeed, in these hypotheses, $\mathcal{G}(F[t])$ acts on the Bruhat-Tits building $X=X\big(\mathcal{G}, F(\!(t^{-1})\!)\big)$ with a sector chamber $Q_0$ (see \cite[\S 1.4]{AbramenkoBrown} for its definition) as a fundamental domain according to \cite[Th. 1]{Soulé} and \cite[Th. 2.1]{Margaux}.
Moreover the $\mathcal{G}(F[t])$-stabilizer of the tip $v_0$ of $Q_0$ is $\mathcal{G}(F)$.
Thus, Thms \ref{teo knudson hom equiv} and \ref{teo wendt isotrivial} can be paraphrased by saying that the $\mathbb{Z}$-homology of $\mathcal{G}(F[t])$ reduces to the homology of $\mathrm{Stab}_{\mathcal{G}(F[t])}(v_0)$.
In Th. \ref{main teo 1} we prove that this interpretation holds for the non-isotrivial group scheme $\mathrm{SU}_{3,\mathbb{P}^1_F}$.
Moreover, we conjecture that, for each semi-simple simply connected non-isotrivial quasi-split groups schemes $\mathcal{G}_{\mathbb{P}^1_F}$ defined over $\mathbb{P}^1_F$, the group $\Gamma=\mathcal{G}_{\mathbb{P}^1_F}(F[t])$ acts on $X$ with a sector chamber $Q_0$ as a fundamental domain, and that its integral homology reduces to the $\mathbb{Z}$-homology of $\mathrm{Stab}_{\Gamma}(v_0)$, where $v_0$ is the tip of $Q_0$ as above.
Since $\mathrm{SU}_3$ and $\mathrm{SL}_2$ encode the behavior of root subgroups of quasi-split reductive groups, we hope that this work, combined with the approach given in \cite{Wendt}, inspires a future study in this direction.



\section{Main results}\label{section main results}

In order to introduce our main results, we consider the $\mathbb{P}^1_{F}$-group scheme $\mathcal{G}$ defined as follows.
Assume that $\mathrm{char}(F)\neq 2$ and set the $2:1$ (ramified) cover $\psi: \mathcal{C}=\mathbb{P}^1_F \to \mathcal{D}=\mathbb{P}^1_F$ given by $z\mapsto z^2$.
This cover corresponds to the quadratic extension field $L=F(\sqrt{t})$ over $K=F(t)$.
Let $R$ be a subring of $K$ such that $\mathrm{Quot}(R)=K$, and let $S\subset L$ be its integral closure in $L$.
At any affine subset $\mathrm{Spec}(R)\subset \mathcal{C}$, we denote by $\mathcal{G}_R$ the special unitary group-scheme defined from the $R$-hermitian form:
\begin{equation}\label{eq h_R}
h_R:S^3\to R, \quad h_R(x,y,z):=x\bar{z}+ y \bar{y} + z \bar{x}.   
\end{equation}
Since we can cover $\mathcal{C}$ with affine subsets $\mathrm{Spec}(R_i)$ with affine intersection, the groups $\mathcal{G}_{R_i}$ can be glue in order to define the $\mathcal{C}$-group scheme $\mathcal{G}=\mathrm{SU}_{3,\mathcal{C}}$.
The generic fiber $\mathcal{G}_K$ of $\mathcal{G}$ is a quasi-split semisimple simply connected $K$-group of (split) rank $1$. 
As we show in \S \ref{subsection radical datum}, this group splits over $L$.
In particular, since $L$ does not have the form $F'(t)$, for some finite extension $F'/F$, the group scheme $\mathcal{G}$ is non-isotrivial, i.e., $\mathcal{G}$ does not have an $F$-model.
Moreover, at any closed point $P$ of $\mathcal{C}$ that fails to decompose at $\mathcal{D}$, the $K_P$-group $\mathcal{G}_{K_P}=\mathcal{G}_K \otimes_{K} K_P$ is quasi-split and it splits at the quadratic extension $L_P=L \otimes_K K_P$.
If $P$ decomposes at $\mathcal{D}$, then $\mathcal{G}_{K_P}=\mathrm{SL}_{3,K_P}$.

Let $\Gamma:=\mathrm{SU}_3(F[t])$ be the group of $R=F[t]$ points of $\mathcal{G}$.
This group can be represented as the group of matrices in $\mathrm{SL}_3(F[\sqrt{t}])$ preserving the form $h_{R}$.
Our first main result is the following, which extends the results of Knudson and Went, in Ths. \ref{teo knudson hom equiv} and \ref{teo wendt isotrivial}, to the context of the group scheme $\mathcal{G}=\mathrm{SU}_{3,\mathcal{C}}$.

\begin{Theorem}\label{main teo 1}
Let $F$ be a field with $\mathrm{char}(F)=0$.
There exists an injective (and natural) homomorphism $\iota: \mathrm{PGL}_2(F) \hookrightarrow \Gamma=\mathrm{SU}_3(F[t])$, which induces isomorphisms:
$$\iota_*: H_* \big( \mathrm{PGL}_2(F), \mathbb{Z} \big) \xrightarrow{\cong}  H_* \big( \mathrm{SU}_3(F[t]), \mathbb{Z} \big).$$
\end{Theorem}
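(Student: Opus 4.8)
Since $\mathcal{G}_K$ has split rank $1$, my plan is to imitate Knudson's two-step proof of Theorem~\ref{teo knudson hom equiv}, with the role of Nagao's decomposition of $\mathrm{SL}_2(F[t])$ played by the decomposition of $\Gamma=\mathrm{SU}_3(F[t])$ arising from its action on the Bruhat-Tits \emph{tree} $X=X\big(\mathcal{G}_K,F(\!(t^{-1})\!)\big)$ at the infinite place, which is ramified in the cover $\psi$. First I would establish --- in the spirit of the cited results of Soul\'e and Margaux, but now in the non-isotrivial setting --- that $\Gamma$ acts on $X$ with a half-line (sector chamber) $v_0-v_1-v_2-\cdots$ as a strict fundamental domain, that $\mathrm{Stab}_\Gamma(v_0)=\Gamma\cap\mathrm{SL}_3(F)$, and that the Bass-Serre presentation collapses to an amalgam $\Gamma\cong G_F\ast_{B_F}B$, where $G_F:=\Gamma\cap\mathrm{SL}_3(F)$, where $B$ is the group of $F[t]$-points of a Borel subgroup scheme of $\mathcal{G}$ (the stabiliser of the isotropic line fixed by the standard form), and where $B_F=B\cap G_F=\mathrm{Stab}_\Gamma(v_0)\cap\mathrm{Stab}_\Gamma(v_1)$ is the Borel subgroup of $G_F$. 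Restricting $h_R$ to $F^3$, where the involution is trivial, yields the split ternary quadratic form $q(x,y,z)=2xz+y^2$, so $G_F=\mathrm{SO}(q)(F)\cong\mathrm{PGL}_2(F)$ via the standard isomorphism $\mathrm{SO}_3\cong\mathrm{PGL}_2$ (the adjoint representation, defined over $\mathbb{Z}[1/2]$); composing it with the inclusion $G_F\hookrightarrow\Gamma$ defines $\iota$, whose injectivity and naturality in $F$ are then immediate.

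Granting the amalgam, the Mayer-Vietoris sequence
\[
\cdots\to H_n(B_F,\mathbb{Z})\to H_n(G_F,\mathbb{Z})\oplus H_n(B,\mathbb{Z})\to H_n(\Gamma,\mathbb{Z})\to H_{n-1}(B_F,\mathbb{Z})\to\cdots
\]
shows that $\iota_*$ is an isomorphism as soon as the inclusion $B_F\hookrightarrow B$ induces an isomorphism on integral homology: if so, the second component of the left-hand map is an isomorphism, hence that map is injective, the connecting homomorphisms vanish, and $H_n(\Gamma,\mathbb{Z})\cong H_n(G_F,\mathbb{Z})$ through $\iota_*$. So the whole theorem reduces to this statement about Borel subgroups, and it is here that $\mathrm{char}(F)=0$ enters decisively.

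Write $S=F[\sqrt{t}]$ for the integral closure of $F[t]$ in $L$. Structure theory of $\mathcal{G}$ --- whose relative root system is of type $BC_1$ --- gives $B=T\ltimes U$ and $B_F=T\ltimes U_F$ with a \emph{common} maximal torus $T$, since $T(F[t])=S^{*}=F^{*}$ and $B_F\hookrightarrow B$ is the identity on $T$. The unipotent radical $U$ is the group of $F[t]$-points of the Heisenberg-type group of the hermitian form, a two-step nilpotent group fitting in a central extension $1\to Z\to U\to S\to 1$ with $Z\cong\sqrt{t}\,F[t]$, on which $T\cong F^{*}$ acts through the relative root (scalar multiplication by $\lambda$) on $U/Z\cong S$ and through its double (scalar multiplication by $\lambda^{2}$) on $Z$; while $U_F=U\cap\mathrm{SL}_3(F)\cong F$ is the root subgroup of $G_F$, on which $T$ again acts by scalar multiplication by $\lambda$. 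Thus $B$ and $B_F$ are both of the shape $T\ltimes V$ with $V$ assembled, through a central series with two (respectively one) layers, from $\mathbb{Q}$-vector spaces on which $T=F^{*}$ acts by \emph{strictly positive} powers of the tautological character. I would then prove, by induction on the length of this central series and by iterating the Lyndon-Hochschild-Serre spectral sequence, that for any such $G=T\ltimes V$ the inclusion $T\hookrightarrow G$ is a homology isomorphism; applying this to $G=B$ and $G=B_F$ and factoring $T\hookrightarrow B_F\hookrightarrow B$ then forces $B_F\hookrightarrow B$ to be a homology isomorphism. At every step the induction reduces to showing $H_p(T,W)=0$ for a $\mathbb{Q}$-vector space $W$ on which $2\in\mathbb{Q}^{*}\subset F^{*}$ acts by a scalar $2^{m}$ with $m\ge 1$ --- the coefficient modules occurring are exterior powers $\Lambda^{q}_{\mathbb{Q}}(U/Z)$, $\Lambda^{q}_{\mathbb{Q}}Z$ and their tensor products, all $\mathbb{Q}$-vector spaces of strictly positive $T$-weight because the integral homology of a uniquely divisible abelian group is the exterior $\mathbb{Q}$-algebra on it; and since $2$ is central in the abelian group $F^{*}$ it acts trivially on $H_p(T,W)$ by functoriality while acting by $2^{m}$ on coefficients, so $(2^{m}-1)H_p(T,W)=0$ and $H_p(T,W)$, being rational, vanishes. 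The availability of an element acting by a non-trivial scalar is precisely where characteristic $0$ is used.

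The hard part, I expect, lies in two places. Assuming the Bruhat-Tits input, the delicate point of the homology computation is keeping track of $T$-weights across the nested spectral sequences, so as to be sure that \emph{every} surviving coefficient module has vanishing zero-weight part --- it is weight positivity, together with $\mathrm{char}(F)=0$, that makes the ``central $2$'' argument bite. In Knudson's $\mathrm{SL}_2$ case the unipotent radical is abelian, so a single spectral sequence suffices; here one must first quotient $B$ by the centre $Z$ to land on $T\ltimes S$ --- which does have the shape of the Borel of $\mathrm{SL}_2$ --- and then run the spectral sequence of $1\to Z\to B\to T\ltimes S\to 1$, checking $H_p(T\ltimes S,\Lambda^{q}_{\mathbb{Q}}Z)=0$ for $q\ge 1$ by the same weight argument (now with weights $\ge 2$). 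The other substantial ingredient is the amalgam itself --- exhibiting a half-line as a fundamental domain for the non-isotrivial $\mathrm{SU}_3$ over $F(\!(t^{-1})\!)$ and computing all vertex and edge stabilisers --- which is geometric, independent of the homological argument, and is presumably carried out in the sections preceding this theorem.
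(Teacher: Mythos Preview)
Your proposal is correct and follows essentially the same route as the paper: the amalgam $\Gamma\cong G_F\ast_{B_F}B$ is taken as input (in the paper it is quoted from \cite{ArenasBravoLoiselLucchini} as Corollary~\ref{lemma ABLL amal}), Mayer--Vietoris reduces the theorem to showing that $F^{*}\hookrightarrow B_F$ and $F^{*}\hookrightarrow B$ are integral homology isomorphisms, and this is carried out exactly as you describe---first quotient by the centre $Z=U_T^{0}$ to land on a group of Knudson's shape $F^{*}\ltimes S$, then run the Hochschild--Serre spectral sequence for $1\to Z\to B\to F^{*}\ltimes S\to 1$ and kill the $q>0$ terms using that an integer $n\in F^{*}$ acts on $\Lambda^{q}Z$ by $n^{2q}$ while inner automorphisms act trivially on homology. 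The paper packages this last step as a single proposition (Proposition~\ref{prop inv hom for Bst}) about the groups $B_{S,T}$, applied once with $S=T=F$ and once with $S=T=F[\sqrt{t}]$, but the content is identical to your two-layer inductive description.
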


Note that Th. \ref{main teo 1} relates the homology groups two different algebraic groups, namely $\mathrm{SU}_3$ and $\mathrm{PGL}_2$.
This result is proved in \S \ref{section homolofy of SU(F[t])}.
It essentially follows in two steps as the analogous result of Knudson in Th. \ref{teo knudson hom equiv}.
Indeed, we fist focus on the Mayer-Vietoris sequence defined by the amalgamated product:
\begin{equation}\label{eq nagao dec for SU3}
\Gamma\cong \mathrm{PGL}_2(F) \ast_{B_0} B, 
\end{equation}
where $B$ is the group of upper triangular matrices in $\Gamma$ and $B_0= \iota\big(\mathrm{PGL}_2(F)\big) \cap B$.
See Lemma \ref{lemma SO3 is PSL2} and \cite[Theo. 2.4]{ArenasBravoLoiselLucchini} for more details.
Then, we reduce the homology groups $H_*\big(B, \mathbb{Z}\big)$ and $H_*\big(B_0, \mathbb{Z}\big)$ to $H_*(F^{*}, \mathbb{Z})$.
Since the unipotent radical of $B$ is, in the language of homological algebra, a non-split extension of $F[\sqrt{t}]$ by itself, the latter reduction is more involved than its analog for the special linear groups described in \cite{Knudson1} \& \cite{Knudson2} or even its analog in the isotrivial case given in \cite{Wendt}.

Now, let us denote by $\tilde{\Gamma}=\mathrm{SU}_3\big(F[t,1/t]\big)$ the group of $R'=F[t,1/t]$ points of $\mathcal{G}$.
This group can be represented as the subgroup of $\mathrm{SL}_3\big(F[\sqrt{t}, 1/\sqrt{t}]\big)$ of matrices preserving $h_{R'}$.
By describing the action of $\tilde{\Gamma}$ on the Bruhat-Tits tree $X\big(\mathrm{SU}_3, F(\!(t)\!) \big)$, we prove in \S \ref{section amalgam} that $\tilde{\Gamma}$ decomposes as the amalgamated product $\Gamma \ast_{\Gamma_0} \hat{\Gamma}$, where
\begin{equation*}
\hat{\Gamma} := \mscriptsize{ \begin{pmatrix} F[\sqrt{t}] & F[\sqrt{t}] & (1/\sqrt{t})F[\sqrt{t}] \\ \sqrt{t}F[\sqrt{t}] & F[\sqrt{t}] & F[\sqrt{t}] \\ \sqrt{t}F[\sqrt{t}] & \sqrt{t}F[\sqrt{t}] & F[\sqrt{t}] \end{pmatrix}}\cap \tilde{\Gamma},
\end{equation*}
and $\Gamma_0=\Gamma_0(t)$ is the Hecke congruence subgroup $\Gamma \cap \hat{\Gamma}$.

In order to describe the integral homology of $\tilde{\Gamma}$, we focus on the description of $H_*(\hat{\Gamma},\mathbb{Z})$ and $H_*(\Gamma_0,\mathbb{Z})$. 
To do so, in \S \ref{section amalgam} and \S \ref{section homology of Hecke},
we describe a fundamental domain for the action of each group, $\hat{\Gamma}$ or $\Gamma_0$, on the Bruhat-Tits tree $X\big(\mathrm{SU}_3, F(\!(t^{-1})\!) \big)$.
Then, by using Bass-Serre theory, we prove in Th. \ref{teo dec of H1 as an amm product} that $\hat{\Gamma}$ is isomorphic to the free product of $\mathrm{SL}_2(F)$ with the group of upper triangular matrices $\hat{B}$ of $\hat{\Gamma}$, amalgamated along a maximal common subgroup $\hat{B}_0$.
This allows us to describe the integral homology of $\hat{\Gamma}$, as next result shows:

\begin{Theorem}\label{main teo hom h1}
Let $F$ be a field with $\mathrm{char}(F)=0$. There exists an injective (and natural) homomorphism $\mathrm{SL}_2(F) \hookrightarrow \hat{\Gamma}$ inducing isomorphisms $H_*\big(\mathrm{SL}_2(F), \mathbb{Z}\big) \xrightarrow{\cong} H_*\big(\hat{\Gamma}, \mathbb{Z}\big)$.
\end{Theorem}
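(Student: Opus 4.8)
The plan is to mimic the two-step strategy used for Theorem \ref{main teo 1}, but now applied to the amalgamated product description of $\hat\Gamma$. First I would invoke Theorem \ref{teo dec of H1 as an amm product}, which gives $\hat\Gamma \cong \mathrm{SL}_2(F) \ast_{\hat B_0} \hat B$, where $\hat B$ is the group of upper triangular matrices in $\hat\Gamma$ and $\hat B_0 = \hat B_F := \hat B \cap \mathrm{SL}_2(F)$ is the subgroup of upper triangular matrices of $\mathrm{SL}_2(F)$, embedded via \eqref{eq iso borels}. Associated to this amalgam there is a Mayer–Vietoris long exact sequence
\begin{equation*}
\cdots \to H_n(\hat B_0,\mathbb{Z}) \to H_n(\mathrm{SL}_2(F),\mathbb{Z}) \oplus H_n(\hat B,\mathbb{Z}) \to H_n(\hat\Gamma,\mathbb{Z}) \to H_{n-1}(\hat B_0,\mathbb{Z}) \to \cdots
\end{equation*}
so it suffices to prove that the inclusion $\hat B_0 \hookrightarrow \hat B$ induces isomorphisms on $H_*(-,\mathbb{Z})$: then the connecting maps vanish, the sequence splits into short exact sequences, and the middle map restricted to the $\mathrm{SL}_2(F)$-summand is the required isomorphism.

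The second step is therefore to compute the homology of $\hat B$ and of $\hat B_0$ and compare them. Both groups are extensions of a torus by a unipotent radical: $\hat B_0 \cong F^\ast \ltimes F$ (the upper triangular Borel of $\mathrm{SL}_2$), while $\hat B$ is an extension $1 \to U \to \hat B \to F^\ast \to 1$ whose unipotent radical $U$ is the group of upper unitriangular matrices in $\hat\Gamma$. As in the proof of Theorem \ref{main teo 1}, $U$ is a non-split extension of a copy of $F[\sqrt t]$ (or a suitable $F$-submodule thereof — here $(1/\sqrt t)F[\sqrt t]$ in the corner entry) by another such module, with the commutator pairing coming from the hermitian form $h_{R'}$. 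I would run the Hochschild–Serre spectral sequence $E^2_{p,q} = H_p(F^\ast, H_q(U,\mathbb{Z})) \Rightarrow H_{p+q}(\hat B,\mathbb{Z})$, and the analogous (trivially degenerating) one for $\hat B_0$. Since $\mathrm{char}(F)=0$, the relevant $F$-vector-space coefficient modules $H_q(U,\mathbb{Z})$ are $\mathbb{Q}$-vector spaces on which $F^\ast$ acts through nontrivial weights (the scaling action $\alpha \mapsto$ multiplication by $\alpha^{\pm 1}$, $\alpha^{\pm 2}$, etc.), so $H_p(F^\ast, H_q(U,\mathbb{Z})) = 0$ for $q > 0$ by the usual center-kills/averaging argument, leaving only the $q=0$ row $H_p(F^\ast,\mathbb{Z})$. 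The same computation for $\hat B_0$ gives $H_*(\hat B_0,\mathbb{Z}) \cong H_*(F^\ast,\mathbb{Z})$, and the inclusion $\hat B_0 \hookrightarrow \hat B$ is compatible with the projections to $F^\ast$, hence induces an isomorphism on $E^2$-pages and thus on homology.

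The main obstacle is the control of $H_q(U,\mathbb{Z})$ for the non-split unipotent radical $U$ and, in particular, verifying that the $F^\ast$-action on these groups has no nonzero invariants or coinvariants. This is exactly the point flagged after Theorem \ref{main teo 1} as being ``more involved than its analog for the special linear groups'': because $U$ is not a product of $F^\ast$-modules, one cannot immediately read off $H_*(U,\mathbb{Z})$ from a Künneth formula, and I would instead use the central-extension structure $1 \to Z \to U \to U/Z \to 1$ (with $Z$ the center, corresponding to the long-root subgroup) together with its own Hochschild–Serre spectral sequence, tracking the $F^\ast$-weights through the differentials to see that every subquotient in positive homological degree for $U$ carries only nonzero weights. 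Once this weight bookkeeping is done, the vanishing $H_p(F^\ast,-)=0$ in positive $q$ follows formally, and the theorem drops out of the Mayer–Vietoris sequence. A secondary, more routine point is checking naturality of the $\mathrm{SL}_2(F)\hookrightarrow\hat\Gamma$ construction in $F$, which follows because every map involved (the embedding \eqref{eq iso borels}, the amalgam decomposition, and the spectral sequences) is functorial in the field.
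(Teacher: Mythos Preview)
Your proposal is correct and follows essentially the same strategy as the paper: invoke the amalgam from Theorem~\ref{teo dec of H1 as an amm product}, write down the Mayer--Vietoris sequence, and reduce both $H_*(\hat B_0,\mathbb{Z})$ and $H_*(\hat B,\mathbb{Z})$ to $H_*(F^*,\mathbb{Z})$ via Hochschild--Serre and a weight (center-kills) argument. The only organizational difference is that the paper has already packaged the hard step as Proposition~\ref{prop inv hom for Bst}: rather than running the outer spectral sequence for $1\to U\to \hat B\to F^*\to 1$ and then an inner one for $1\to Z\to U\to U/Z\to 1$ as you propose, the paper peels off the center $U_T^0$ directly from $B_{S,T}$, i.e.\ uses $1\to U_T^0\to B_{S,T}\to F^*\ltimes S\to 1$, so that the abelian kernel has homology $\bigwedge^q T^0$ with a single weight $\lambda^{2q}$ and no inner spectral sequence is needed; the remaining quotient $F^*\ltimes S$ is then handled by Knudson's \cite[Prop.~3.2]{Knudson1}. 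Your nested spectral sequence route works too (the $E^2$-terms carry weights $\lambda^{p+2q}$, all nonzero for $(p,q)\neq(0,0)$), but the paper's ordering avoids ever having to assemble $H_q(U,\mathbb{Z})$ from its associated graded, which is why the actual proof in \S\ref{subsection action of G'} is only a few lines long.
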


In Th. \ref{teo amal dec for hecke}, we show that $\Gamma_0$ is isomorphic to two copies of $B$ amalgamated by $F^{*}$ according to the diagonal injection $F^{*} \to B$, by previously determining a fundamental domain for the action of $\Gamma_0$ on its associated tree (see Cor. \ref{coro fund reg for H01}).
This allows us to prove the following result.

\begin{Theorem}\label{main teo hom of h01}
Let $F$ be a field with $\mathrm{char}(F)=0$. The diagonal homomorphism $F^* \hookrightarrow \Gamma_0$ induces isomorphisms $H_*\big(F^{*},\mathbb{Z}\big) \xrightarrow{\cong} H_*\big(\Gamma_0,\mathbb{Z}\big)$. 
\end{Theorem}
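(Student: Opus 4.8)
The strategy is to reduce the statement to a formal Mayer-Vietoris computation, using as input the amalgam decomposition of $\Gamma_0$ and the homology of the Borel subgroup $B$. Recall that Theorem \ref{teo amal dec for hecke} --- whose proof rests on the fundamental domain described in Corollary \ref{coro fund reg for H01} --- identifies $\Gamma_0$ with the amalgamated free product $B \ast_{F^{*}} B$, where $F^{*}$ is the diagonal torus embedded in each factor and where the diagonal homomorphism $F^{*} \hookrightarrow \Gamma_0$ of the statement is the composite $F^{*} \hookrightarrow B \hookrightarrow \Gamma_0$ through either copy of $B$. The action of $\Gamma_0$ on the associated Bass-Serre tree then produces the Mayer-Vietoris exact sequence
$$\cdots \to H_n(F^{*},\mathbb{Z}) \xrightarrow{\alpha_n} H_n(B,\mathbb{Z}) \oplus H_n(B,\mathbb{Z}) \to H_n(\Gamma_0,\mathbb{Z}) \xrightarrow{\partial_n} H_{n-1}(F^{*},\mathbb{Z}) \to \cdots,$$
in which $\alpha_n = \big((j_1)_*, -(j_2)_*\big)$ for $j_1, j_2\colon F^{*} \hookrightarrow B$ the two amalgamating inclusions (which coincide, or differ at worst by the automorphism $x \mapsto x^{-1}$ of $F^{*}$).

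The essential point is the identification $H_*(B,\mathbb{Z}) \cong H_*(F^{*},\mathbb{Z})$ induced by the inclusion $j\colon F^{*} \hookrightarrow B$. This is exactly the reduction carried out, for the very same group $B$, in the proof of Theorem \ref{main teo 1} in \S\ref{section homolofy of SU(F[t])}: writing $B = F^{*} \ltimes U$ with $U$ its unipotent radical, the hypothesis $\mathrm{char}(F)=0$ makes $U$ a uniquely divisible nilpotent group, so $H_q(U,\mathbb{Z})$ is a $\mathbb{Q}$-vector space for $q \geq 1$; an element $h_\lambda \in F^{*}$ with $\lambda$ not a root of unity acts on $H_q(U,\mathbb{Z})$ through an operator all of whose eigenvalues are $\neq 1$, while conjugation by $h_\lambda$ is inner in $B$ and hence trivial on $H_*(B,\mathbb{Z})$; comparing the two actions on the $E^2$-page of the Hochschild-Serre spectral sequence of $1 \to U \to B \to F^{*} \to 1$ forces it to collapse onto the edge $H_*(F^{*},\mathbb{Z})$. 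I would simply quote this computation here.

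Granting $j_*\colon H_*(F^{*},\mathbb{Z}) \xrightarrow{\cong} H_*(B,\mathbb{Z})$, the map $\alpha_n$ becomes, after these identifications, of the form $x \mapsto (x, -\sigma(x))$ with $\sigma$ an automorphism of $H_n(F^{*},\mathbb{Z})$; such a map is split injective with cokernel $H_n(F^{*},\mathbb{Z})$ (via $(a,b) \mapsto a + \sigma^{-1}(b)$), so every $\partial_n$ vanishes and the Mayer-Vietoris sequence breaks into short exact sequences $0 \to H_n(F^{*},\mathbb{Z}) \xrightarrow{\alpha_n} H_n(B,\mathbb{Z})^{2} \to H_n(\Gamma_0,\mathbb{Z}) \to 0$. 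Tracking the identifications, the inclusion of the first factor $H_n(B,\mathbb{Z}) \to H_n(\Gamma_0,\mathbb{Z})$ is then an isomorphism, and precomposing with $j_*$ shows that the diagonal map $H_n(F^{*},\mathbb{Z}) \to H_n(\Gamma_0,\mathbb{Z})$ is an isomorphism, which is the assertion. I expect the main obstacle to lie entirely upstream of this argument: establishing the amalgam decomposition of Theorem \ref{teo amal dec for hecke} through the fundamental domain of Corollary \ref{coro fund reg for H01}, and the homological reduction $H_*(B,\mathbb{Z}) \cong H_*(F^{*},\mathbb{Z})$, which is subtler than its $\mathrm{SL}_2$ analogue since $U$ is a non-split self-extension of $F[\sqrt{t}]$. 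The remaining work here is purely formal, the only delicate bookkeeping being to match the diagonal $F^{*}$ of the statement with the amalgamating subgroup and to track the identifications so as to obtain an isomorphism induced by the stated homomorphism.
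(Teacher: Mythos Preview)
Your proposal is correct and follows essentially the same route as the paper: invoke the amalgam $\Gamma_0 \cong B \ast_{F^{*}} B$ from Theorem~\ref{teo amal dec for hecke} to obtain the Mayer--Vietoris sequence (this is Proposition~\ref{prop homology of H01}), feed in the identification $H_*(B,\mathbb{Z}) \cong H_*(F^{*},\mathbb{Z})$ (which is Proposition~\ref{prop inv hom for Bst} with $S=T=F[\sqrt{t}]$), and conclude by observing that the long exact sequence then splits. Your write-up is in fact more explicit than the paper's about why $\alpha_n$ is split injective and about tracking that the resulting isomorphism is indeed induced by the diagonal inclusion.
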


Analogous results for some other relevant congruence subgroups of $\Gamma$ are described in \S \ref{section homology of Hecke}.
Since $\mathcal{G}_K=\mathrm{SU}_{3}(h_K)$ is simply connected, we conjecture that Ths. \ref{main teo 1}, \ref{main teo hom h1} \& \ref{main teo hom of h01} extend to the context where $F$ is an arbitrary infinite field with $\mathrm{char}(F)\neq 2$.
To do so, it could be interesting to use the approach of \cite{Wendt}, by previously studying the analog of Prop. \ref{prop inv hom for Bst} for arbitrary infinite field (for instance, by extending the method outlined in \cite[Ch. 2, \S 2.2]{KnudsonBook}).

The decomposition of $\tilde{\Gamma}$ as $\Gamma \ast_{\Gamma_0} \hat{\Gamma}$ yields a Mayer-Vietoris sequence on homology. Applying Th. \ref{main teo 1}, \ref{main teo hom h1} \& \ref{main teo hom of h01} to this Mayer-Vietoris sequence, we prove in \S \ref{subsection hom of hecke}, the next result on the rational homology of $\tilde{\Gamma}$, extending Th. \ref{teo knudson hom equiv for SL2(F[t,t-1])} to our context.

\begin{Theorem}\label{main teo 4}
For each field $F$ with $\mathrm{char}(F)=0$, there is an exact sequence of the form:
$$
\cdots \to H_n\big(F^*, \mathbb{Z} \big) \to H_n\big(\mathrm{PGL}_2(F), \mathbb{Z}\big) \oplus H_n\big(\mathrm{SL}_2(F),\mathbb{Z}\big) \to H_n\big(\mathrm{SU}_3(F[t,t^{-1}]), \mathbb{Z}\big) \to \cdots $$
\end{Theorem}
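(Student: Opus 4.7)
The strategy is essentially spelled out in the paragraph preceding the statement: use the amalgamated product decomposition $\tilde{\Gamma}\cong \Gamma \ast_{\Gamma_0}\hat{\Gamma}$ established in Section \ref{section amalgam}, feed it into Bass--Serre's Mayer--Vietoris sequence, and then rewrite each term using Theorems \ref{main teo 1}, \ref{main teo hom h1} and \ref{main teo hom of h01}. I would carry this out in three steps.

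First, I would invoke the general homological machinery for amalgamated products (see for instance \cite[Ch.\ II, \S 7]{KnudsonBook} or Brown's \emph{Cohomology of groups}, Ch.\ VII). Applied to $\tilde{\Gamma}=\Gamma \ast_{\Gamma_0}\hat{\Gamma}$, it yields the long exact sequence
\begin{equation*}
\cdots \to H_n(\Gamma_0,\mathbb{Z}) \xrightarrow{(\alpha_*,-\beta_*)} H_n(\Gamma,\mathbb{Z}) \oplus H_n(\hat{\Gamma},\mathbb{Z}) \to H_n(\tilde{\Gamma},\mathbb{Z}) \to H_{n-1}(\Gamma_0,\mathbb{Z}) \to \cdots,
\end{equation*}
where $\alpha\colon\Gamma_0\hookrightarrow\Gamma$ and $\beta\colon\Gamma_0\hookrightarrow\hat{\Gamma}$ are the inclusions coming from the amalgam.

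Second, I would substitute the homology of the three groups at the vertices and edge of the amalgam. By Theorem \ref{main teo hom of h01}, the diagonal inclusion $F^*\hookrightarrow \Gamma_0$ induces $H_*(F^*,\mathbb{Z})\xrightarrow{\cong} H_*(\Gamma_0,\mathbb{Z})$; by Theorem \ref{main teo 1}, the map $\iota\colon\mathrm{PGL}_2(F)\hookrightarrow \Gamma$ induces $H_*(\mathrm{PGL}_2(F),\mathbb{Z})\xrightarrow{\cong} H_*(\Gamma,\mathbb{Z})$; and by Theorem \ref{main teo hom h1}, there is an inclusion $\mathrm{SL}_2(F)\hookrightarrow \hat{\Gamma}$ which induces $H_*(\mathrm{SL}_2(F),\mathbb{Z})\xrightarrow{\cong} H_*(\hat{\Gamma},\mathbb{Z})$. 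Transporting the Mayer--Vietoris sequence across these three isomorphisms gives the exact sequence stated in Theorem \ref{main teo 4}.

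Third, I would verify that the transported connecting maps are induced by the expected natural homomorphisms. Concretely, one has to check that, up to the isomorphisms of Theorems \ref{main teo 1}, \ref{main teo hom h1} and \ref{main teo hom of h01}, the composition $F^*\hookrightarrow\Gamma_0\xrightarrow{\alpha}\Gamma$ is (homologically) the composition $F^*\hookrightarrow\mathrm{PGL}_2(F)\xrightarrow{\iota}\Gamma$, and analogously for $\beta$ with $F^*\hookrightarrow \mathrm{SL}_2(F)\hookrightarrow \hat{\Gamma}$. Both checks reduce to the observation that the diagonal $F^*\subset \Gamma_0$ sits inside the diagonal torus which, under the inclusions built in Sections \ref{section homolofy of SU(F[t])} and \ref{section amalgam}, is precisely the image of the diagonal torus of $\mathrm{PGL}_2(F)$ (resp.\ of $\mathrm{SL}_2(F)$). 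This is a direct matrix computation using the explicit forms of $\iota$ and of the embedding $\mathrm{SL}_2(F)\hookrightarrow \hat{\Gamma}$ given in the paper.

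The main obstacle, and essentially the only nontrivial point in the proof, is this last compatibility of the natural maps on homology: once the three Theorems \ref{main teo 1}, \ref{main teo hom h1} and \ref{main teo hom of h01} are in hand, the shape of the exact sequence is automatic from Bass--Serre theory, but one still has to make sure the horizontal arrows arising from Mayer--Vietoris are indeed induced by the \emph{natural} inclusions of $F^*$ into $\mathrm{PGL}_2(F)$ and $\mathrm{SL}_2(F)$, so that the sequence is canonical and not only exact up to some unspecified automorphisms of the terms.
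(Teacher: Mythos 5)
Your proposal is correct and follows exactly the paper's own argument: apply the Mayer--Vietoris sequence for the amalgam $\tilde{\Gamma}\cong\Gamma\ast_{\Gamma_0}\hat{\Gamma}$ from Lemma \ref{lemma decomposition of H} and replace the three terms using Theorems \ref{main teo 1}, \ref{main teo hom h1} and \ref{main teo hom of h01}. Your additional third step, checking that the transported arrows are induced by the natural inclusions of $F^*$ into $\mathrm{PGL}_2(F)$ and $\mathrm{SL}_2(F)$, is a sensible extra precision (and is implicitly used later, e.g.\ in Cor. \ref{coro ab of tilde gamma}), but it does not change the route.
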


By using Th. \ref{main teo 4}, we prove in Cor. \ref{coro ab of tilde gamma} that $H_1 (\tilde{\Gamma}, \mathbb{Z} )=\lbrace 0 \rbrace$, i.e., that the abelianization of $\tilde{\Gamma}=\mathrm{SU}_3 \left(F\left[t, t^{-1}\right] \right)$ is trivial.
This extends a previous result due to Cohn in \cite{Cohn}.
By also applying Th. \ref{main teo 4}, we describe in \S \ref{subsection hom of hecke} the rational homology of $\mathrm{SU}_3(F[t,t^{-1}])$ over certian fields $F$.
This includes an extension of \cite[Th. 5.1]{Knudson1} to our context.
In fact, in Cor. \ref{coro hom SU3(F[t,t-1])}, we prove that if $F$ is a number field with $r$ (resp. $s$) real (resp. conjugate pairs of complex) embeddings of $F$, then the connecting map:
$$ \partial: H_n\big(\tilde{\Gamma}, \mathbb{Q}\big)= H_n \Big(\mathrm{SU}_3 \left(F\left[t, t^{-1}\right] \right), \mathbb{Q} \Big) \to H_{n-1}\big(F^{*}, \mathbb{Q} \big), $$
is injective for $n \geq 2r+3s+1$ and bijective for $n \geq 2r+3s+2$.

\section{Conventions and preliminaries}\label{section conv and prel}

In this section, we recall some general well-known facts on the structure of the algebraic group $\mathrm{SU}_3$, as well as its associated Bruhat-Tits tree.
In particular, we do not assume that $\mathrm{char}(F)=0$.

\subsection{The radical datum of \texorpdfstring{$\mathrm{SU}_3$}{SU3}}\label{subsection radical datum}

As in \S \ref{section introduction}, assume just that $\mathrm{char}(F)\neq 2$ and set the $2:1$ (ramified) cover $\psi: \mathcal{C}=\mathbb{P}^1_F \to \mathcal{D}=\mathbb{P}^1_F$ given by $z\mapsto z^2$.
Let $K=F(t)$ and $L=F(\sqrt{t})$ be the function fields of the curves $\mathcal{C}$ and $\mathcal{D}$ as defined above.
The quadratic extension $L/K$ is Galois.
In the sequel, we denote by $\overline{x}$ the image of $x\in L$ via the non-trivial element in $\mathrm{Gal}(L/K)$.

Let $\mathcal{G}_K=\mathrm{SU}(h_K)$ be the special unitary $K$-group defined by the hermitian form $h_K: L^3 \to K$, $h_K(x,y,z)=x\bar{z}+y\bar{y}+z\bar{x}$. This group can be represented as the subgroup of the Weil restriction $\mathrm{R}_{L/K}(\mathrm{SL}_{3,L})$ consisting in the elements preserving $h_K$.
Following \cite[4.1]{BT2} and \cite[§4, Case 2, p. 43--50]{Landvogt}, in this section, we recall some basic concepts on the radical datum of $\mathcal{G}_K$.
In order to do this, we write:
$$ \mathrm{diag}(x,y,z)= \sbmattrix{x}{0}{0}{0}{y}{0}{0}{0}{z}.$$
A maximal $K$-split torus $\mathcal{S}_K$ of $\mathcal{G}_K$ consists in the subgroup of diagonal matrices of the form $\mathrm{diag}(\lambda,1,\lambda^{-1})$, where $\lambda \in \mathbb{G}_{m,K}$.
The centralizer $\mathcal{T}_K$ of $\mathcal{S}_K$ in $\mathcal{G}_K$ is then a $K$-maximal torus of $\mathcal{G}_K$.
This group can be parameterized by $\mathrm{R}_{L/K}(\mathbb{G}_{m,L})$ via $\lambda \mapsto \mathrm{diag}\left( \lambda, \bar{\lambda}\lambda^{-1}, \bar{\lambda}^{-1} \right) \in \mathcal{T}_K$.
In particular, the group of $K$-point $\mathcal{T}_K(K)$ of $\mathcal{T}_K$ admits the following parametrization:
$$\Tilde{a}: L^{*} \to \mathcal{T}_K(K), \quad \lambda \mapsto \mathrm{diag}\left(\lambda, \bar{\lambda}\lambda^{-1}, \bar{\lambda}^{-1} \right).$$
Note that $\mathcal{T}_K$ splits over $L$, but no over $K$.
More explicitly, note that $\mathcal{T}_L:=\mathcal{T}_K \otimes_{K} L \cong \mathbb{G}_{m,L}^2$, however $\mathcal{T}_K \not\cong \mathbb{G}_{m,K}^2$. 
A basis of characters of the $L$-split torus $\mathcal{T}_L$ consists in the characters $\lbrace \alpha, \overline{\alpha} \rbrace$ defined by $\alpha(\mathrm{diag}(x,y,z))=yz^{-1}$ and $\overline{\alpha}(\mathrm{diag}(x,y,z))=xy^{-1}$.
These characters are related via the action of $\mathrm{Gal}(L/K)$.
We denote by $\texttt{a}$ (resp. $2\texttt{a}$) the restriction of $\alpha$ (resp. $\alpha+\overline{\alpha}$) to $\mathcal{S}_K$.
The root system of $\mathcal{G}_K$ is $\Phi=\lbrace \pm \texttt{a}, \pm 2\texttt{a} \rbrace$.
Here, the root $\texttt{a}$ generates the $\mathbb{Z}$-module of characters $X^*(\mathcal{S}_K)$, while $(2\texttt{a})^{\vee}$ generated the $\mathbb{Z}$-module of cocharacter $X_*(\mathcal{S}_K)$.

The Weyl group $W=\mathcal{N}_{\mathcal{G}_K}(\mathcal{S}_K)/\mathcal{Z}_{\mathcal{G}_K}(\mathcal{S}_K)$ of $\mathcal{G}_K$ has order $2$. 
This group acts on $\Phi$ by exchanging $\texttt{a}$ with $-\texttt{a}$ (resp. $2\texttt{a}$ with $-2\texttt{a}$) via its non-trivial element $w\in W$.
Moreover, a lift $\mathrm{s}\in \mathcal{G}_K$ of $w\in W$ is given by the matrix:
\begin{equation}\label{eq s}
\mathrm{s}:=\sbmattrix{0}{0}{-1}{0}{-1}{0}{-1}{0}{0}.
\end{equation}

A system of positive root $\Phi^{+} \subset \Phi$ consists in the set $\lbrace \texttt{a}, 2\texttt{a} \rbrace$.
This election fixes a $K$-Borel subgroup of $\mathcal{G}_K$.
Explicitly, $\mathcal{B}_K$ is the group of upper triangular matrices in $\mathcal{G}_K$.
The root subgroups $\mathcal{U}_{\texttt{a}}=\mathcal{U}_{\texttt{a},K}$ and $\mathcal{U}_{2\texttt{a}}=\mathcal{U}_{2\texttt{a},K}$ of $\mathcal{B}_K$ defined by $\texttt{a}$ and $2\texttt{a}$ are parameterized by:
\begin{equation}\label{eq u_a}
\begin{array}{cccc}
    \U :& \mathcal{H}(L,K) &\to  & \mathcal{U}_{\texttt{a}}\\
     & (u,v) &\mapsto & \mscriptsize{ \begin{pmatrix} 1 & -\bar{u} & v \\ 0 & 1 & u \\ 0 & 0 & 1\end{pmatrix}} 
     \end{array}
    \, \, \text{,  and } \, \, 
    \begin{array}{cccc}
    \V :& \mathcal{H}(L,K)^0 &\to  & \mathcal{U}_{2\texttt{a}}\\
     & v &\mapsto & \mscriptsize{ \begin{pmatrix} 1 & 0 & v \\ 0 & 1 & 0 \\ 0 & 0 & 1\end{pmatrix}} 
\end{array},
\end{equation}
where $\mathcal{H}(L,K)$ and $\mathcal{H}(L,K)^0$ are the following $K$-varieties defined from the norm and the trace $\mathrm{N}=\mathrm{N}_{L/K}$ and $\mathrm{Tr}=\mathrm{Tr}_{L/K}$ of $L/K$:
\begin{align*}
\mathcal{H}(L,K) &:= \left\{ (u,v) \in \mathrm{R}_{L/K}(\mathbb{G}_{a,L})^2 \big\vert \, \mathrm{N}(u) + \mathrm{Tr}(v) = 0\right\},\\
\mathcal{H}(L,K)^0 &:= \left\{ v \in \mathrm{R}_{L/K}(\mathbb{G}_{a,L}) \big\vert \, \mathrm{Tr}(v) = 0\right\}.
\end{align*}
In the sequel, we denote by $H(L,K)$ (resp. $H(L,K)^0$) the set of $K$-point of $\mathcal{H}(L,K)$ (resp. $\mathcal{H}(L,K)^0$).
By abuse of notation, we write $\U$ (resp. $\V$) for the induced parametrization $\U: H(L,K) \to \mathcal{U}_{\texttt{a}}(K)$ (resp. $\V: H(L,K)^0 \to \mathcal{U}_{2\texttt{a}}(K)$.
Thus, we can write:
$$\mathcal{B}_K(K)= \left\{ \U(x,y)\tilde{a}(\lambda)= \sbmattrix{\lambda}{-\bar{\lambda}\lambda^{-1}\bar{x}}{\lambda^{-1}y}{0}{\bar{\lambda}\lambda^{-1}}{\lambda^{-1}x}{0}{0}{\bar{\lambda}^{-1}} \Big| \lambda\in L^{*}, \, (x,y) \in H(L,K) \right\}.$$

The unipotent subgroups of $\mathcal{G}_K$ corresponding to the negative root $-\texttt{a}$ (resp. $-2\texttt{a}$) is parameterized by $\mathcal{H}(L,K)$ (resp. $\mathcal{H}(L,K)^0$) via the homomorhism $\mathrm{u}_{-\texttt{a}}:=\mathrm{s} \cdot \U \cdot \mathrm{s}$ (resp. $\mathrm{u}_{-2\texttt{a}}:=\mathrm{s} \cdot \V \cdot \mathrm{s}$).
In the sequel, we also we write $\mathrm{u}_{-\texttt{a}}$ (resp. $\mathrm{u}_{-2\texttt{a}}$) for the induced parametrization $H(L,K) \to \mathcal{U}_{-\texttt{a}}(K)$ (resp. $H(L,K)^0 \to \mathcal{U}_{-2\texttt{a}}(K)$) at the level of $K$-points.

\subsection{The Bruhat-Tits tree of \texorpdfstring{$\mathrm{SU}_3$}{SU3}}\label{subsection BT}

The main goal of this section is to write a brief introduction to the Bruhat-Tits building $\X_P$ defined by $\mathcal{G}_{K_P}=\mathcal{G}_K \otimes_K K_P$, where $P$ is a closed point of $\mathbb{P}^1_F$, which is non-split (inert) at $L$.
Indeed, recall that each closed point $P \in \mathbb{P}^1_F$ gives rise to a valuation $\nu_P$ on $K$. 
Its completion $K_P$ is endowed with a valuation map, which, by abuse of notation, we also denote by $\nu_P$.
Since $P$ is assumed non-split over $L$, the field $L_P=L\otimes_K K_P$ is a quadratic extension of $K_P$, and, in particular, it is a discrete valued field.
By abuse of notation, we denote by $\nu_P$ the valuation on $L_P$ extending the valuation on $K_P$.
In the sequel, we write $\mathrm{Gal}(L_P/K_P)=\lbrace \mathrm{id}, \overline{(\cdot)} \rbrace$.

The (standard) apartment $\Aa_{P}$ of $\X_{P}$ defined by the maximal $K_P$-torus $\mathcal{S}_{K_P}:=\mathcal{S}_K \otimes_K K_P$ is, by definition:
$$\Aa_{P}=X_*(\mathcal{S}_{K_P}) \otimes_{\mathbb{Z}} \mathbb{R},$$
where $X_*(\mathcal{S}_{K_P})$ is the module of cocharacters of $\mathcal{S}_{K_P}$, as in \S \ref{subsection radical datum}.
The vertex set $\mathrm{V}(\Aa_P)$ of $\Aa_P$ is the set of elements $x=r \texttt{a}^{\vee}$ such that $\texttt{a}(x)=2r$ belongs to $\Gamma_a:= \frac{1}{4} \mathbb{Z}$.
The vertex set $\mathrm{V}(\Aa_P)$ induces a tessellation on the apartment.
An edge in $\Aa_P$ is an open segment in $\Aa_P$ joining two consecutive vertices.
We denote by $\mathrm{E}(\Aa_P)$ the edge set of $\Aa_P$.
See~\cite[4.2.21(4) and 4.2.22]{BT2} for more details. 
The group $N$ of $K_P$-points of $\mathcal{N}_{\mathcal{G}_{K_P}}(\mathcal{S}_{K_P})$ decomposes as $T \cup \mathrm{s}\cdot T$, where $T$ is the group of $K_P$-points of $\mathcal{T}_{K_P}=\mathcal{T}_K \otimes_K K_P$.
Since the group $T$ is parametrized by $L_P^*$ via $\lambda\mapsto \mathrm{diag}\left(\lambda,\bar{\lambda}\lambda^{-1}, \bar{\lambda}^{-1}\right)$, the group $N$ acts on $\Aa_P$ via:
$$ \tilde{a}(\lambda) \cdot x = x- \frac{1}{2}\nu_P(\lambda)\texttt{a}^{\vee}, \text{ and } \mathrm{s} \cdot x=-x, \quad  \forall x \in\Aa_P, \, \,  \, \forall \lambda \in L_P^{*}.$$
See~\cite[6.2.10]{BT1} and~\cite[4.2.7]{BT2} for more details. We write $H(L_P,K_P)$ (resp. $H^0(L_P,K_P)$) in order to denote the set of $K_P$-points of $\mathcal{H}(L,K) \otimes_K K_P$ (resp. $\mathcal{H}^{0}(L,K) \otimes_K K_P$).
By abuse of notation, we respectively denote by $\mathrm{u}_{\texttt{a}}$ and $\mathrm{u}_{2\texttt{a}}$ the maps from $H(L_P,K_P)$ and $H^0(L_P,K_P)$ to the root subgroups $\mathcal{U}_{\texttt{a}}(K_{P})$ and $\mathcal{U}_{2\texttt{a}}(K_{P})$.
For each $b\in \lbrace \texttt{a},-\texttt{a} \rbrace$, we set:
$$\mathcal{U}_{b,x}(K_P):=\big\lbrace \mathrm{u}_{b}(x,y) : (x,y) \in H(L_P,K_P), \,\, \nu_P(y) \geq - 2b(x) \big\rbrace.$$
Let us write $\mathcal{U}_{x}(K_P)= \big\langle \mathcal{U}_{\texttt{a}, x}(K_P), \,\mathcal{U}_{-\texttt{a}, x}(K_P) \big\rangle$, and
define $\backsim$ as the equivalence relation on $\mathcal{G}_{K_P}(K_P) \times \Aa_P$ given by: 
\begin{equation*}
(g,x) \backsim (h,y) \iff \exists n \in N, \,\, y=n \cdot x \text{ and } g^{-1}hn \in \mathcal{U}_{x}(K_P).
\end{equation*}
The Bruhat-Tits tree (building) $\X_P$ defined from $\mathcal{G}_{K_P}$ is the gluing of multiple copies of $\Aa_P$ according to:
$$
\X_P=\X\big(\mathcal{G}_{K_P}\big):= \mathcal{G}_{K_P}(K_P) \times \Aa_P/\backsim.
$$
This topological space is a graph, since the relation $\sim$ preserves the tessellation on $\Aa \cong \mathbb{R}  a^{\vee}$.
Moreover, this graph is a tree (i.e., it is connected and simply connected) according to \cite[Prop. 4.33]{Brown-Buildings}.
The group $\mathcal{G}_{K_P}(K_P)$ acts on $\X_P$ via simplicial maps. 
Moreover, the standard apartment $\Aa_{P}$ is the unique double infinity ray of $\X_{P}$ stabilized by $T=\mathcal{T}_{K_P}(K_{P}) \subseteq \mathcal{G}_{K_P}(K_P)$.

In the sequel, we denote by $\mathscr{R}_{P}$ the ray of $\Aa_{P}$ whose vertex set is $\lbrace v \in \mathrm{V}(\Aa_P): \texttt{a}(v)\geq 0 \rbrace$.
Note that $\Aa_P$ equals $\mathscr{R}_{P} \cup \mathrm{s} \cdot \mathscr{R}_{P}$, where $\mathscr{R}_{P}\cap \mathrm{s} \cdot \mathscr{R}_{P} = \left\lbrace 0 \cdot \texttt{a}^{\vee} \right\rbrace$.
We enumerate the vertex set $\mathrm{V}(\mathscr{R}_{P})$ of $\mathscr{R}_P$ by writing $\mathrm{V}(\mathscr{R}_{P})=\lbrace v_n \rbrace_{n=0}^{\infty}$, where $v_n$ and $v_{n+1}$ are neighbors for all $n \geq 0$.
We analogously enumerate $\mathrm{V}(\mathrm{s} \cdot \mathscr{R}_{P})$ by setting $\mathrm{V}(\mathrm{s} \cdot \mathscr{R}_{P})=\lbrace v_{-n} \rbrace_{n=0}^{\infty}$, where $v_{-n}$ and $v_{-n-1}$ are neighbors.
Let $v \in \X_{P}$ be a vertex. We denote by $\mathscr{V}^1(v)$ the set of all neighboring vertices of $v$.
The $1$-star of $v$ is the (full) subtree of $\X_P$ whose vertex set is exactly $\mathscr{V}^1(v) \cup \lbrace v \rbrace$.

\subsection{On the action of \texorpdfstring{$\Gamma$}{G} on the Bruhat-Tits tree}\label{subsection quot by arith sub}

Recall that $P=\infty$ is a closed point of $\mathbb{P}^1_F$, which is non-split at $L$.
Thus, the Bruhat-Tits building of $\X_{\infty}$ of $\mathcal{G}_{K_{\infty}}$ is the tree defined in \S \ref{subsection BT}.
Moreover, the group $\Gamma=\mathrm{SU}_3(F[t])$ acts on $\X_{\infty}$ as a subgroup of $\mathcal{G}_{K_{\infty}}(K_{\infty})$.

In the sequel, for any pair of sets $S,T \subseteq L$ we write $H(L,K)_{S\times T}=H(L,K) \cap (S \times T)$, while when $S=T$, we just write $H(L,K)_S$ instead of $H(L,K)_{S\times S}$.
Let $q: F^3\to F$, $q(x,y,z)=2xz+y^2$ be the quadratic form on $F$ defined by the restriction of $h$ to $F$, and consider the following subgroups of $\Gamma$:
\begin{align*}
G_0 &:= \Gamma \cap \mathrm{SL}_3(F) = \mathrm{SO}(q)(F) ,\\ \label{eq H_1}
G_n &:= \left \lbrace \U(x,y) \Tilde{a}(\lambda) \big\vert \, (x,y) \in H(L,K)_{F[\sqrt{t}]}, \,\, \nu_{\infty}(y)\geq -n/2, \,\, \lambda\in F^{*} \right \rbrace, \quad n>0, \\
B_0 &:= B \cap G_0=\{\text{upper triangular matrices in }\mathrm{SO}(q)(F)\}.
\end{align*}

In low dimensions, there are some exceptional isomorphisms between certain algebraic groups.
This is the case for $\mathrm{SO}(q)(F)$ and $\mathrm{PGL}_2(F)$ as the next result, due to Dieudonée in \cite{Dieudonnée}, shows.

\begin{Lemma}\cite[Ch. II, \S 9, (3)]{Dieudonnée}\label{lemma SO3 is PSL2}
There exists an isomorphism $\psi: \mathrm{PGL}_2(F) \to \mathrm{SO}(q)(F)$.
\end{Lemma}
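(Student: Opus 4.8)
The plan is to realize $\psi$ as the classical exceptional isogeny $\mathrm{PGL}_2 \to \mathrm{SO}_3$ furnished by the adjoint representation, and then to check that it is bijective on $F$-points. Let $\mathfrak{g}=\mathfrak{sl}_2$ be the $3$-dimensional $F$-space of trace-zero $2\times 2$ matrices, on which $\mathrm{GL}_2$ acts by conjugation; as scalar matrices act trivially, this descends to an action of $\mathrm{PGL}_2$ on $\mathfrak{g}$. Writing $X=\left(\begin{smallmatrix}a&b\\ c&-a\end{smallmatrix}\right)\in\mathfrak{g}$, one has $-\det X=a^2+bc$, a nondegenerate ternary quadratic form (here $\mathrm{char}\,F\neq 2$), and conjugation preserves $\det$; hence the action lands in $\mathrm{O}(-\det)$, in fact in the identity component $\mathrm{SO}(-\det)$ because $\mathrm{PGL}_2$ is connected. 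The linear isomorphism $\mathfrak{g}\xrightarrow{\sim}F^3$, $X\mapsto(b,a,c/2)$, carries $-\det$ onto $q(x,y,z)=2xz+y^2$ (using $\mathrm{char}\,F\neq 2$ once more, to divide by $2$); conjugating by it turns the above into a homomorphism $\psi\colon\mathrm{PGL}_2(F)\to\mathrm{SO}(q)(F)$, manifestly functorial in $F$.

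Injectivity is immediate: if $g\in\mathrm{GL}_2$ centralizes $\mathfrak{sl}_2$, then since $\mathfrak{sl}_2$ and the identity matrix span $M_2$, $g$ is scalar, hence trivial in $\mathrm{PGL}_2$; thus $\ker\psi=1$ already as group functors. For surjectivity I would pass to algebraic groups: $\mathrm{PGL}_2$ and $\mathrm{SO}(q)$ are smooth connected $F$-groups of the same dimension $3$, the morphism $\mathrm{PGL}_2\to\mathrm{SO}(q)$ just constructed has trivial scheme-theoretic kernel, and its differential at the identity is the adjoint map $\mathrm{ad}\colon\mathfrak{sl}_2\to\mathfrak{so}(q)$, whose kernel is the centre of $\mathfrak{sl}_2$ — and that centre vanishes precisely because $\mathrm{char}\,F\neq 2$. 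Comparing dimensions, this differential is an isomorphism, so the morphism is étale; its image is then an open subgroup of the connected group $\mathrm{SO}(q)$, hence equals $\mathrm{SO}(q)$, and the morphism is an isomorphism of algebraic $F$-groups. In particular $\psi$ is bijective on $F$-points, as desired.

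The one genuinely delicate point is surjectivity on $F$-points, where a mere dimension-and-connectedness count over $F$ is not enough; this is what forces the scheme-theoretic argument above (or, equivalently, an explicit check via the rank-one Bruhat decomposition of both groups: $\psi$ sends a maximal split torus and the two root subgroups of $\mathrm{PGL}_2$ onto those of $\mathrm{SO}(q)$, after which $G(F)=\langle T(F),U^{+}(F),U^{-}(F)\rangle$ on each side finishes it). In both routes the hypothesis $\mathrm{char}\,F\neq 2$ is invoked in exactly one essential place — to guarantee separability, i.e.\ that $\mathfrak{sl}_2$ is centreless — and nothing more about $F$ is needed.
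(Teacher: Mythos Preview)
Your proof is correct and follows essentially the same route as the paper's sketch: both construct $\psi$ via the conjugation action of $\mathrm{PGL}_2$ on trace-zero $2\times 2$ matrices, identify the preserved form with $q$, and argue injectivity from the fact that $\mathfrak{sl}_2$ together with the identity spans $M_2$. Your surjectivity argument via \'etaleness (trivial kernel plus centrelessness of $\mathfrak{sl}_2$ in characteristic $\neq 2$) is more explicit than the paper's terse ``dimensional argument,'' but the underlying idea is the same.
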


\begin{Remark}
For the sake of completeness, we include a sketch of the proof of Lemma \ref{lemma SO3 is PSL2} by describing the action of $\mathrm{PGL}_2$ on the Lie algebra of $\mathrm{SL}_2$.
Indeed, since $\mathrm{char}(F)\neq 2$, up to replacing $q$ by an equivalent quadratic form, we can assume that $q(x,y,z)=xz+y^2$, $\forall x,y,z \in F$.
Since $\mathrm{PGL}_2(F)$ acts, by conjugation, on the set of trace null matrices $V=\left\lbrace A_{a,b,c}=\ssbmatrix{b}{a}{c}{-b} \vert a,b,c \in F\right\rbrace$, we have a representation $\psi: \mathrm{PGL}_2(F) \to \mathrm{GL}(V) \cong \mathrm{GL}_3(F)$.
By definition:
$$\ker(\psi)=\lbrace [B] \in  \mathrm{PGL}_2(F) \vert \, BAB^{-1}=A, \forall A \in V \rbrace.$$
Since $V \oplus F \cdot \mathrm{id} = \mathbb{M}_2(F)$, the kernel of $\psi$ equals:
$$\left\lbrace [B] \in  \mathrm{PGL}_2(F) \vert \, BAB^{-1}=A, \forall A \in \mathbb{M}_2(F) \right\rbrace=\left\lbrace [B] \in  \mathrm{PGL}_2(F) \vert \, B \in F^* \cdot \mathrm{id} \right\rbrace= \left\lbrace [\mathrm{id}] \right\rbrace.$$
Thus $\psi$ is injective.
Since $A_{a,b,c}^2=(ac+b^2) \cdot \mathrm{id}= q((a,b,c)) \cdot \mathrm{id}$ and $(\psi([B])(A))^2 = (BAB^{-1})^2=A^2$, we get $\mathrm{Im}(\psi) \subset \mathrm{O}(q)(F)$.
Moreover, it is straightforward $\mathrm{Im}(\psi) \subset \mathrm{SL}_3(F)$, whence $\mathrm{Im}(\psi) \subset \mathrm{SO}(q)(F)$.
We can prove that $\psi$ is surjective via a dimensional argument.
\end{Remark}

Next result follows from \cite[\S 11.1]{ArenasBravoLoiselLucchini}.

\begin{Lemma}\cite[\S 11.1]{ArenasBravoLoiselLucchini}\label{lemma ABLL action}
For each $n>0$, the group $G_n$ acts on $\mathscr{V}^1(v_n)$ with two orbits, namely $G_n \cdot v_{n-1}$ and $G_n \cdot v_{n+1}$. 
Moreover, the group $G_0$ acts transitively on $\mathscr{V}^1(v_0)$.
In particular, the ray $\mathscr{R}_{\infty}$ is a fundamental domain for the action of $\Gamma=\mathrm{SU}_3(F[t])$ on $\X_{\infty}$.
Moreover, for each $n \geq 0$, the $\Gamma$-stabilizer of $v_n$ is exactly $G_n$.
\end{Lemma}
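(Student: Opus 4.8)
The plan is to run, in the present quasi-split unitary setting, the classical analysis of $\mathrm{SL}_2(F[t])$ acting on its tree (in the form that Soul\'e, \cite{Soulé}, extends to split groups), working throughout inside $\mathcal{G}_{K_\infty}(K_\infty)$ acting on $\X_{\infty}$. For a vertex $v$ write $\mathcal{K}_v=\mathrm{Stab}_{\mathcal{G}_{K_\infty}(K_\infty)}(v)$, put $\mathcal{U}^{+}=\mathcal{U}_{\mathtt a}\mathcal{U}_{2\mathtt a}$, and let $\xi^{+}$ be the end of $\X_{\infty}$ determined by $\mathscr{R}_{\infty}$. The argument has four parts: \emph{(a)} $\mathrm{Stab}_\Gamma(v_n)=G_n$; \emph{(b)} the orbit statements on the $1$-stars; \emph{(c)} $\Gamma\cdot\mathscr{R}_{\infty}=\X_{\infty}$; \emph{(d)} pairwise non-equivalence of the $v_n$. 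The easy half of (a), together with (c) and (d), become essentially formal once (b) is available, so (b) carries the real content.

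\emph{(a).} An element $\U(x,y)\Tilde{a}(\lambda)\in\Gamma$ forces $\lambda\in F[\sqrt t]^{\times}=F^{*}$ and $x,y\in F[\sqrt t]$; such a $\lambda$ acts trivially on $\Aa_{\infty}$, and from $\mathrm{N}(x)=-\mathrm{Tr}(y)$ one gets $2\nu_\infty(x)=\nu_\infty(\mathrm{N}(x))\geq\nu_\infty(y)$, so by the filtrations of \S\ref{subsection BT} every element of $G_n$ lies in $\mathcal{U}_{\mathtt a,v_m}(K_\infty)$, hence fixes $v_m$, for all $m\geq n$. In particular $G_n\subseteq G_{n+1}$, so $G_n$ fixes $v_{n+1}$ and $G_n\cdot v_{n+1}=\{v_{n+1}\}$, and $G_n\subseteq\mathrm{Stab}_\Gamma(v_n)$. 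For the reverse inclusion one intersects the explicit lattice description of $\mathcal{K}_{v_n}$ with $\Gamma$: for $n=0$ the standard lattice is unimodular, so $\mathcal{K}_{v_0}\cap\Gamma$ has entries in $F[\sqrt t]\cap\mathcal{O}_{L_\infty}=F$ and equals $\mathrm{SU}_3(F)=\mathrm{SO}(q)(F)=G_0$; for $n\geq1$ the lower-triangular entries of $\mathcal{K}_{v_n}$ are forced to have strictly positive $\infty$-valuation, hence vanish on $\Gamma$ (a nonzero polynomial in $\sqrt t$ has negative $\infty$-valuation), so $\mathrm{Stab}_\Gamma(v_n)$ is upper triangular and unwinds to exactly $G_n$.

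\emph{(b).} The action of $\mathcal{K}_{v_n}$ on $\mathscr{V}^1(v_n)$ factors through the reductive quotient $\overline{\mathcal{K}}_{v_n}$ of the corresponding parahoric group scheme over the residue field $F$ (the pro-unipotent radical acting trivially), and $\mathscr{V}^1(v_n)$ is the rank-one building of $\overline{\mathcal{K}}_{v_n}$ over $F$, hence $\cong\mathbb{P}^1(F)$ as a set. For $n=0$, $\overline{\mathcal{K}}_{v_0}$ is $\mathrm{SU}_3$ over $F$, i.e.\ $\mathrm{SO}(q)\cong\mathrm{PGL}_2$ by Lemma \ref{lemma SO3 is PSL2}, and $\mathscr{V}^1(v_0)$ is the conic of isotropic lines, on which the image of $G_0$ — namely $\mathrm{SO}(q)(F)\cong\mathrm{PGL}_2(F)$ — acts transitively; so $G_0$ is transitive on $\mathscr{V}^1(v_0)$. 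For $n\geq1$, $\overline{\mathcal{K}}_{v_n}$ has semisimple rank one; since $G_n\subseteq\mathcal{B}_K(K_\infty)$, its image in $\overline{\mathcal{K}}_{v_n}(F)$ is a Borel subgroup — the torus $F^{*}$ surjecting onto a maximal torus, and the unipotent part of $G_n$ surjecting onto the residual unipotent radical, as one checks from the parametrization of $\mathcal{U}^{+}$ and the bound $\nu_\infty(y)\geq -n/2$ defining $G_n$. A Borel of a rank-one group has exactly two orbits on its building, a fixed point and its complement; here the fixed point is the outward direction, which is $v_{n+1}$ by (a), so the two orbits are $\{v_{n+1}\}$ and $G_n\cdot v_{n-1}$.

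\emph{(c), (d), and the obstacle.} Now $\Gamma\cdot\mathscr{R}_{\infty}$ is a subtree of $\X_{\infty}$, and by (b) every vertex of each $\mathscr{V}^1(v_m)$ already lies in $\Gamma\cdot\mathscr{R}_{\infty}$; connectivity of $\X_{\infty}$ then gives $\Gamma\cdot\mathscr{R}_{\infty}=\X_{\infty}$. For (d): $v_0$ is not $\Gamma$-equivalent to any $v_n$ with $n>0$, since $\mathrm{Stab}_\Gamma(v_0)\cong\mathrm{PGL}_2(F)$ is non-solvable while $\mathrm{Stab}_\Gamma(v_n)=G_n\subseteq\mathcal{B}_K(K_\infty)$ is solvable; and for $0<m<n$, the group $G_n=\mathrm{Stab}_\Gamma(v_n)$ fixes the unique end $\xi^{+}$ (by (b) the sole $G_n$-fixed point of each $\mathscr{V}^1(v_{m'})$, $m'\geq n$, is the outward one), hence any $\gamma$ with $\gamma v_m=v_n$ fixes $\xi^{+}$ and preserves its Busemann function, which is strictly monotone along $\mathscr{R}_{\infty}$ — forcing $m=n$. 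Putting this together, $\mathscr{R}_{\infty}\to\Gamma\backslash\X_{\infty}$ is an isomorphism of graphs with the asserted stabilizers. The hard part is (b): identifying the links $\mathcal{K}_{v_n}$-equivariantly with $\mathbb{P}^1(F)$ and pinning down the image of $G_n$ in the residual group, where the Heisenberg-type (non-split) structure of $\mathcal{U}^{+}$ and the ramification of $L_\infty/K_\infty$ — in particular tracking which leading coefficient governs which vertex type and matching the root-group filtrations of \S\ref{subsection BT} against the parahoric — make the computation substantially heavier than for $\mathrm{SL}_2$; this, together with the (standard) transitivity of the full parahoric on each link, is precisely what is carried out in \cite[\S 11.1]{ArenasBravoLoiselLucchini}.
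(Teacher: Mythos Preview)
The paper does not prove this lemma: it is quoted from \cite[\S 11.1]{ArenasBravoLoiselLucchini}, so there is no in-paper argument to compare against. Your sketch is the standard one and is essentially what that reference carries out; it also parallels the argument the present paper \emph{does} give for the companion group $\hat{\Gamma}$ in Proposition~\ref{prop fund dom for H1} and the lemmas preceding it, where parts (a)--(c) are handled just as you describe (Iwahori factorization $U_{-\mathtt a,n}U_{\mathtt a,n}N_n$ intersected with the arithmetic group for the stabilizers, reduction to the residual rank-one group on each link for the orbit count, then propagation along the ray).

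One step in (d) needs tightening. From ``$G_n$ fixes the end $\xi^{+}$'' you write ``hence any $\gamma$ with $\gamma v_m=v_n$ fixes $\xi^{+}$'', which does not follow as stated. The missing bridge is: $\gamma G_m\gamma^{-1}=G_n$; for $m\geq1$ the end $\xi^{+}$ is the \emph{unique} end fixed by $G_m$ (since $G_m$ contains a nontrivial unipotent), so $\gamma\xi^{+}$ is the unique end fixed by $G_n$, forcing $\gamma\xi^{+}=\xi^{+}$; then $\gamma\in\Gamma\cap\mathcal{B}_K(K_\infty)=B$ has diagonal part $\tilde a(\lambda)$ with $\lambda\in F^{*}$, hence Busemann displacement zero, giving $m=n$. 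For comparison, the paper's own treatment of the analogous non-equivalence step (end of the proof of Proposition~\ref{prop fund dom for H1}) avoids ends entirely: it argues that if $\hat{G}_n$ and $\hat{G}_m$ are conjugate then the $F$-vector spaces $\pi(\hat{U}_n)$ and $\pi(\hat{U}_m)$ (images under $\U(x,y)\mapsto x$) have the same dimension $\lfloor n/2\rfloor=\lfloor m/2\rfloor$, and then invokes vertex type to exclude $n=m\pm1$. Either route works; yours is more geometric, the paper's more hands-on.
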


By applying Bass-Serre Theory (cf. \cite[\S 5]{SerreTrees}) to the preceding result, with Arenas-Carmona, Loisel and Lucchini Arteche, we decompose the group $\Gamma$ in the following amalgamated product.

\begin{Corollary}\cite[Th. 11.1]{ArenasBravoLoiselLucchini}\label{lemma ABLL amal}
The group $\Gamma$ is the free product of $\mathrm{SO}(q)(F)\cong \mathrm{PGL}_2(F)$ and 
$$B:=\left\lbrace \U(x,y)\widetilde{a}(\lambda) \big\vert \, (x,y) \in H(L,K)_{F[\sqrt{t}]}, \, \, \lambda \in F^{*} \right\rbrace,$$
amalgamated along their intersection $B_0$.
\end{Corollary}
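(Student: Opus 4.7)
The plan is to read the group decomposition off directly from the fundamental-domain information in Lemma~\ref{lemma ABLL action} using Bass--Serre theory (cf.~\cite[\S 5]{SerreTrees}). Since $\Gamma$ acts on the tree $\X_{\infty}$ with fundamental domain the ray $\mathscr{R}_{\infty}$, whose vertices $v_0, v_1, v_2, \dots$ have $\Gamma$-stabilizers $G_0, G_1, G_2, \dots$, and since the edge $e_n = [v_{n-1}, v_n]$ is stabilized by the intersection $G_{n-1} \cap G_n$, Bass--Serre theory identifies
$$
\Gamma \;\cong\; G_0 \ast_{G_0 \cap G_1} G_1 \ast_{G_1 \cap G_2} G_2 \ast_{G_2 \cap G_3} \cdots,
$$
the infinite iterated amalgamated product along $\mathscr{R}_{\infty}$.

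The second step is to collapse this infinite amalgam to a single two-factor one. The key observation is that the chain $G_1 \subseteq G_2 \subseteq G_3 \subseteq \cdots$ is nested: the defining condition $\nu_{\infty}(y) \geq -n/2$ weakens as $n$ grows, while every other constraint on $\U(x,y)\tilde{a}(\lambda)$ is independent of $n$. Hence $G_{n-1} \cap G_n = G_{n-1}$ for all $n \geq 2$, so each amalgamation $G_{n-1} \ast_{G_{n-1}} G_n$ is trivially $G_n$. A short induction on finite truncations shows that the iterated amalgam reduces to
$$
\Gamma \;\cong\; G_0 \ast_{G_0 \cap G_1} \Bigl(\varinjlim_{n \geq 1} G_n \Bigr).
$$

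It then remains to identify the three factors. By Lemma~\ref{lemma SO3 is PSL2}, $G_0 = \mathrm{SO}(q)(F) \cong \mathrm{PGL}_2(F)$. The directed union $\bigcup_{n \geq 1} G_n$ is exactly $B$, because every $y \in F[\sqrt{t}]$ has finite $\nu_{\infty}(y)$ and therefore lies in some $G_n$. Finally, since $G_n \subseteq B$ for every $n \geq 1$, the intersection $G_0 \cap G_1$ is contained in $G_0 \cap B$, which consists of the upper triangular matrices in $\mathrm{SO}(q)(F)$, that is $B_0$; the reverse inclusion $B_0 \subseteq G_0 \cap G_1$ is immediate by inspection of the parametrizations. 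Assembling the three identifications yields $\Gamma \cong \mathrm{PGL}_2(F) \ast_{B_0} B$.

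The only slightly delicate point I foresee is rigorously justifying the passage from an infinite iterated amalgam to an amalgamated product against a direct limit. This can be handled either by a cofinality argument on Bass--Serre normal forms, or more structurally by noting that the action of $\Gamma$ restricted to $B$ already has the subray $(v_1, v_2, \dots)$ as fundamental domain with nested stabilizers, so that $B$ is itself identified with $\varinjlim_n G_n$ via Bass--Serre, after which only the single edge $e_1$ participates in a genuine amalgamation.
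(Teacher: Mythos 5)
Your argument is correct and is essentially the paper's own route: the paper obtains this corollary (citing \cite[Th.~11.1]{ArenasBravoLoiselLucchini}) by applying Bass--Serre theory to the fundamental-domain statement of Lemma~\ref{lemma ABLL action}, exactly as it later spells out for $\hat{\Gamma}$ in Theorem~\ref{teo dec of H1 as an amm product}, where the nested stabilizers along the ray are replaced by their union and only the first edge contributes a genuine amalgamation. Your identifications $G_0\cong\mathrm{PGL}_2(F)$, $\bigcup_{n\geq 1}G_n=B$ and $G_0\cap G_1=B_0$ match the paper's, and the ``delicate point'' you flag is handled by Serre's direct-limit formulation, just as you suggest.
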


\section{The homology of \texorpdfstring{$\mathrm{SU}_3(F[t])$}{SU31}}\label{section homolofy of SU(F[t])}

The main goal of this section is to prove Th. \ref{main teo 1}.
In order to do this, we develop some results that describe the homology of suitable subgroups of $\mathcal{B}_K(K)$.
We will also apply them in order to prove Th. \ref{main teo hom h1} in \S \ref{subsection action of G'}.

Let $S,T$ be two $F$-subvector spaces of $L$ such that $\mathrm{N}(S)=\mathrm{N}_{L/K}(S) \subseteq T$, and write:
$$B_{S,T}:=\Big\lbrace \U(x,y)\tilde{a}(\lambda) \big\vert \, (x,y)\in H(L,K)_{S\times T}, \,\, \lambda \in F^* \Big\rbrace. $$
The first part of this section is devoted to prove the next result:
\begin{Proposition}\label{prop inv hom for Bst}
When $\mathrm{char}(F)=0$, the group homomorphism $F^*\to B_{S,T}$, $\lambda\mapsto \tilde{a}(\lambda)$ induces isomorphisms $H_*(F^*, \mathbb{Z}) \xrightarrow{\cong} H_*(B_{S,T}, \mathbb{Z})$.
\end{Proposition}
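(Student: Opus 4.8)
The plan is to present $B_{S,T}$ as a semidirect product and combine two Lyndon--Hochschild--Serre spectral sequences, the whole argument being organized around the weights of the $\mathbb{G}_m$--action induced by the cocharacter $\tilde a$. The hypotheses guarantee that $B_{S,T}$ is a subgroup of $\mathcal B_K(K)$ with unipotent radical $U_{S,T}=\{\U(x,y):(x,y)\in H(L,K)_{S\times T}\}$; a direct matrix computation yields the product formula $\U(u_1,v_1)\U(u_2,v_2)=\U(u_1+u_2,v_1+v_2-\bar u_1u_2)$ and the commutator $[\U(u_1,v_1),\U(u_2,v_2)]=\U(0,\bar u_2u_1-\bar u_1u_2)$. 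Hence $U_{S,T}$ is at most $2$--step nilpotent, its centre contains $Z:=\{\U(0,v):v\in T,\ \mathrm{Tr}(v)=0\}$, and $(u,v)\mapsto u$ gives a central extension $0\to Z\to U_{S,T}\to S\to 0$ in which $Z$ and $S$ are $F$--subspaces of $L$, hence $\mathbb{Q}$--vector spaces since $\mathrm{char}(F)=0$. Conjugation by $\tilde a(\lambda)$ sends $\U(u,v)$ to $\U(\lambda u,\lambda^2 v)$, so $B_{S,T}=U_{S,T}\rtimes F^{*}$, the projection $B_{S,T}\to F^{*}$ is split by $\lambda\mapsto\tilde a(\lambda)$, and the resulting grading places $S$ in weight $1$ and $Z$ in weight $2$.

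I would then study $H_q(U_{S,T},\mathbb{Z})$ as a $\mathbb{Z}[F^{*}]$--module through the homological spectral sequence of $0\to Z\to U_{S,T}\to S\to 0$. As $Z$ and $S$ are $\mathbb{Q}$--vector spaces, $H_q(Z,\mathbb{Z})=\bigwedge^q Z$ and $H_p(S,\mathbb{Z})=\bigwedge^p S$, with all nonzero degrees again $\mathbb{Q}$--vector spaces, and since $S$ acts trivially on $H_q(Z,\mathbb{Z})$ the universal coefficient theorem (the relevant Tor terms vanish, everything being torsion-free) gives $E^2_{p,q}\cong\bigwedge^p S\otimes_{\mathbb{Z}}\bigwedge^q Z$, which is $F^{*}$--homogeneous of weight $p+2q$. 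Now an $F^{*}$--equivariant map between homogeneous modules of distinct nonnegative weights must vanish — evaluate the action of a fixed element of infinite order, e.g.\ $2\in\mathbb{Q}^{*}\subseteq F^{*}$ — so the differential $d_r$, which shifts weight by $r-2$, vanishes for $r\geq 3$; therefore $E^{\infty}=E^3$ and, for $n\geq 1$, the group $H_n(U_{S,T},\mathbb{Z})$ is a finite iterated extension of $\mathbb{Q}[F^{*}]$--modules, each homogeneous of some weight $p+2q\geq n\geq 1$.

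The remaining ingredient is a standard acyclicity lemma: if $c\in F^{*}$ is not a root of unity and the $\mathbb{Q}$--vector space $M$ is such that $c$ acts on it by a scalar $c^w\neq 1$ with $w\geq 1$ (take $c=2$), then $c-1$ acts invertibly, so $H_0(\langle c\rangle,M)=M/(c-1)M=0$ and $H_1(\langle c\rangle,M)=\ker(c-1)=0$, whence $H_{*}(\langle c\rangle,M)=0$; since $F^{*}$ is abelian, the spectral sequence of $1\to\langle c\rangle\to F^{*}\to F^{*}/\langle c\rangle\to 1$ gives $H_{*}(F^{*},M)=0$, and this vanishing is inherited by extensions. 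Applying this to the modules produced above yields $H_p(F^{*},H_q(U_{S,T},\mathbb{Z}))=0$ for all $p\geq 0$ and $q\geq 1$, so the outer spectral sequence $E^2_{p,q}=H_p(F^{*},H_q(U_{S,T},\mathbb{Z}))\Rightarrow H_{p+q}(B_{S,T},\mathbb{Z})$ is concentrated on the row $q=0$ and degenerates; there $E^2_{p,0}=H_p(F^{*},\mathbb{Z})$, and the edge homomorphism $H_{*}(B_{S,T},\mathbb{Z})\to H_{*}(F^{*},\mathbb{Z})$ is an isomorphism. Since it is split by $\tilde a_{*}$ (as $\lambda\mapsto\tilde a(\lambda)$ splits $B_{S,T}\to F^{*}$), the map $\tilde a_{*}\colon H_{*}(F^{*},\mathbb{Z})\to H_{*}(B_{S,T},\mathbb{Z})$ is an isomorphism, as claimed.

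I expect the main difficulty to lie in the second step: tracking the weight grading through the genuinely non-split central extension $0\to Z\to U_{S,T}\to S\to 0$ and checking that all the homology groups in sight are honest $\mathbb{Q}$--vector spaces, so that the weight/eigenvalue vanishing argument applies verbatim. Once this structural bookkeeping is in place, the two spectral sequence collapses are formal.
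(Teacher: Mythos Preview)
Your argument is correct. The core mechanism---the $F^{*}$-weight grading forcing vanishing on $\mathbb{Q}$-vector spaces once an element like $2\in\mathbb{Q}^{*}$ is available---is the same as in the paper, but you organize the filtrations differently. The paper first quotients $B_{S,T}$ by the centre $Z=U_T^0$, obtaining the split semidirect product $B_{S,T}/Z\cong F^{*}\ltimes S$, and then invokes Knudson's result \cite[Prop.~3.2]{Knudson1} for that; a single Hochschild--Serre spectral sequence for $1\to Z\to B_{S,T}\to F^{*}\ltimes S\to 1$, with the weight-$2$ action on $Z$, finishes the job. You instead quotient by the full unipotent radical $U_{S,T}$ and analyse $H_{*}(U_{S,T},\mathbb{Z})$ through the central extension $0\to Z\to U_{S,T}\to S\to 0$, tracking weights across two nested spectral sequences. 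Your route is more self-contained (it does not cite Knudson and effectively reproves his Prop.~3.2 inside the argument), at the cost of the extra bookkeeping you correctly flag: verifying that all $E^{r}_{p,q}$ with $(p,q)\neq(0,0)$ remain $\mathbb{Q}$-vector spaces so that the eigenvalue argument applies, and that the filtration pieces of $H_n(U_{S,T},\mathbb{Z})$ inherit positive weight. The paper's route avoids this by never needing to know $H_{*}(U_{S,T},\mathbb{Z})$ at all.
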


Let $U_T^0$ be the subgroup of $B_{S,T}$ defined by $U_T^0= \lbrace \V(y) \vert \, y \in H(L,K)^0 \cap T \rbrace$. It is not hard to prove that $U_T^0$ is isomorphic to the additive group $ T^0:=\lbrace y \in T \vert \, \mathrm{Tr}(y)=0 \rbrace$.

\begin{Lemma}\label{lemma quot B/U0}
The group $U_T^0$ is normal in $B_{S,T}$ and $B_{S,T}/U_T^0 \cong F^* \ltimes S$, where $\lambda \in F^*$ acts on $S$ via $x \mapsto \lambda\cdot x$.
\end{Lemma}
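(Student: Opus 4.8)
The plan is to produce the isomorphism by exhibiting an explicit surjective group homomorphism $\phi\colon B_{S,T}\twoheadrightarrow F^{*}\ltimes S$ whose kernel is exactly $U_T^0$; the normality of $U_T^0$ and the description of the quotient then both drop out of the first isomorphism theorem.

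The one piece of input is the multiplication law of $B_{S,T}$, read off from the matrices in \eqref{eq u_a}. Since every $\lambda\in F^{*}$ is fixed by $\mathrm{Gal}(L/K)$ we have $\tilde{a}(\lambda)=\mathrm{diag}(\lambda,1,\lambda^{-1})$, so conjugation gives $\tilde{a}(\lambda)\,\U(x',y')\,\tilde{a}(\lambda)^{-1}=\U(\lambda x',\lambda^{2}y')$, while multiplying two upper unipotent generators yields $\U(x,y)\,\U(x',y')=\U(x+x',\,y+y'-\bar x x')$ (a routine $3\times 3$ product; the term $-\bar x x'$ is the Heisenberg-type twist of the unipotent radical). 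Combining these, for $(x,y),(x',y')\in H(L,K)_{S\times T}$ and $\lambda,\lambda'\in F^{*}$,
\begin{equation*}
\big(\U(x,y)\tilde{a}(\lambda)\big)\big(\U(x',y')\tilde{a}(\lambda')\big)=\U\!\big(x+\lambda x',\ y+\lambda^{2}y'-\lambda\bar x x'\big)\,\tilde{a}(\lambda\lambda').
\end{equation*}

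Now define $\phi\big(\U(x,y)\tilde{a}(\lambda)\big):=(\lambda,x)$ with values in $F^{*}\ltimes S$, the semidirect product with law $(\lambda,x)(\lambda',x')=(\lambda\lambda',\,x+\lambda x')$, i.e.\ $F^{*}$ acting on the additive group $S$ by $x\mapsto\lambda x$. Since the decomposition of a matrix of $B_{S,T}$ as $\U(x,y)\tilde{a}(\lambda)$ is unique, $\phi$ is well defined, and the displayed identity says precisely that it is a homomorphism. It is surjective: for any $x\in S$ and $\lambda\in F^{*}$ one has $-\tfrac12\mathrm{N}(x)\in T$ (as $\mathrm{N}(S)\subseteq T$ and $\mathrm{char}(F)\neq 2$) and $\mathrm{N}(x)+\mathrm{Tr}\!\big(-\tfrac12\mathrm{N}(x)\big)=0$, so $\big(x,-\tfrac12\mathrm{N}(x)\big)\in H(L,K)$ and $\phi\big(\U(x,-\tfrac12\mathrm{N}(x))\,\tilde{a}(\lambda)\big)=(\lambda,x)$. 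Finally, $\ker\phi$ consists of the $\U(x,y)\tilde{a}(\lambda)$ with $\lambda=1$ and $x=0$; for such an element the condition $(0,y)\in H(L,K)$ reduces to $\mathrm{Tr}(y)=0$, and since $\U(0,y)=\V(y)$ this gives $\ker\phi=\{\V(y)\mid y\in T,\ \mathrm{Tr}(y)=0\}=U_T^0$. The first isomorphism theorem then yields $U_T^0\trianglelefteq B_{S,T}$ and $B_{S,T}/U_T^0\cong F^{*}\ltimes S$, as asserted.

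There is no conceptual obstacle; the only care needed is in the bookkeeping of the twist $-\lambda\bar x x'$ in the third coordinate and in checking that passing to the quotient by $U_T^0$ kills precisely that coordinate while leaving the pair $(\lambda,x)$ untouched. As a consistency check one may instead verify directly that $\big(\U(x,y)\tilde{a}(\lambda)\big)\,\V(z)\,\big(\U(x,y)\tilde{a}(\lambda)\big)^{-1}=\V(\lambda^{2}z)$, which re-proves normality and shows that $B_{S,T}$ acts on $U_T^0\cong(T^{0},+)$ through $F^{*}$ via $z\mapsto\lambda^{2}z$, in agreement with the remark preceding the lemma.
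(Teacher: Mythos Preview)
Your proof is correct and follows essentially the same approach as the paper. The only organizational difference is that the paper proceeds in two stages---first writing $B_{S,T}\cong F^{*}\ltimes U_{S,T}$, then showing $U_{S,T}/U_T^0\cong S$ via the projection $\U(x,y)\mapsto x$---whereas you collapse these into the single surjection $\phi\big(\U(x,y)\tilde a(\lambda)\big)=(\lambda,x)$; the computational ingredients (the product formula $\U(x,y)\U(x',y')=\U(x+x',y+y'-\bar x x')$, the conjugation $\tilde a(\lambda)\U(x',y')\tilde a(\lambda)^{-1}=\U(\lambda x',\lambda^2 y')$, and the surjectivity witness $\U(x,-\mathrm{N}(x)/2)$) are identical.
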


\begin{proof}
Let $U_{S,T}$ be the unipotent radical of $B_{S,T}$. More explictly, let:
$$U_{S,T}:=\left\lbrace \U(x,y) \big \vert \, (x,y)\in H(L,K)_{S\times T} \right\rbrace. $$
Since $U_{S,T} \trianglelefteq B_{S,T}$ with $B_{S,T}/U_{S,T} \cong F^*$, and $F^*$ is a subgroup of $B_{S,T}$, we have that
 $$B_{S,T} \cong U_{S,T} \ltimes F^{*}.$$
Note that, for each $(x,y) \in H(L,K)_{S \times T}$, $\lambda \in F^*$ and $v \in H(L,K)^0_{T}$, we have:
\begin{equation}\label{eq action of B on u2a}
\Big( \U(x,y)\tilde{a}(\lambda) \Big) \cdot \V(v) \cdot \Big( \U(x,y)\tilde{a}(\lambda) \Big)^{-1}=\V(\lambda^2v). 
\end{equation}
In particular $U_T^0 $ is a normal subgroup of both $B_{S,T}$ and $U_{S,T}$.
Moreover, since $U_T^0 \cap F^{*} = \lbrace \mathrm{id} \rbrace$, we have that $B/U_T^{0} \cong (U_{S,T}/U_T^0) \ltimes F^{*}$. 
Let $\pi_S: U_{S,T} \to S$ be the map defined by $\pi_S(\U(x,y))=x$.
Since
$$\U(x,y)\U(u,v) = \U(x+u,y+v-\bar{x}u),$$
the map $\pi_S$ is a group homomorphism.
Moreover, since $\mathrm{N}(S) \subseteq T$, for each $x\in S$, we have that $\U(x,-\mathrm{N}(x)/2) \in H(L,K)_{S\times T}$.
Thus $\pi_S$ is surjective.
By definition of $U_T^0$, we have $\ker(\pi_S)=U_T^{0}$, whence we conclude that $B/U_T^{0} \cong F^{*} \ltimes S$.
It is straightforward that 
$$\Tilde{a}(\lambda) \U(x,y) \Tilde{a}(\lambda) = \U(\lambda x, \lambda^2y).$$
Thus, $F^{*}$ acts on $S$ via $x \mapsto \lambda\cdot x$.
\end{proof}

Since $B_{S,T}/U_T^0$ is a semi-direct product of $F^*$ with a $F$-vector space, namely $S$, where $F^*$ acts via  scalar multiplication,
the next result follows from \cite[Prop. 3.2]{Knudson1}.

\begin{Lemma}\label{lemma inv hom in B/U}
When $\mathrm{char}(F)=0$, the canonical map $F^* \to B_{S,T}/U_T^0$ induces isomorphisms $H_*(F^*,\mathbb{Z}) \xrightarrow{\cong} H_*\big(B_{S,T}/U_T^0,\mathbb{Z}\big)$. \qed
\end{Lemma}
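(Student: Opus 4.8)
The plan is to work with the semidirect-product description $B_{S,T}/U_T^0 \cong F^* \ltimes S$ supplied by Lemma~\ref{lemma quot B/U0}, in which $F^*$ acts on the $F$-vector space $S$ by scalar multiplication, and to analyze the Hochschild--Serre spectral sequence
\[
E^2_{p,q} = H_p\big(F^*,\, H_q(S,\mathbb{Z})\big) \;\Longrightarrow\; H_{p+q}\big(F^* \ltimes S,\, \mathbb{Z}\big)
\]
attached to the extension $1 \to S \to F^* \ltimes S \to F^* \to 1$. This is precisely the situation of \cite[Prop.~3.2]{Knudson1}, and the following is the argument I would record.

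First I would identify the coefficient modules on the $E^2$-page. Since $\mathrm{char}(F)=0$, the abelian group $S$ is a $\mathbb{Q}$-vector space, hence torsion free, so $H_q(S,\mathbb{Z}) \cong \bigwedge^q_{\mathbb{Z}} S$ as an $F^*$-module (via a direct limit over finitely generated subgroups, if one wants to be scrupulous). Because $F^*$ scales $S$ through $x\mapsto \lambda x$, it acts on $\bigwedge^q_{\mathbb{Z}} S$ through the character $\lambda\mapsto\lambda^q$; in particular each $\bigwedge^q_{\mathbb{Z}} S$ is a $\mathbb{Q}$-vector space on which $F^*$ acts by scalars. The heart of the proof is the vanishing $H_p\big(F^*,\bigwedge^q_{\mathbb{Z}} S\big)=0$ for all $p\geq0$ and all $q\geq1$, which I would deduce from the standard ``a central element acting on the coefficients by a scalar $\neq1$ kills the homology'' principle: $F^*$ is abelian, so for every $\lambda\in F^*$ the coefficient automorphism $m\mapsto\lambda\cdot m$ induces the identity on $H_p(F^*,M)$ for any $F^*$-module $M$; taking $M=\bigwedge^q_{\mathbb{Z}} S$ and $\lambda=2\in\mathbb{Q}^*\subseteq F^*$, this coefficient map is multiplication by $2^q$, so $2^q-1$ annihilates $H_p(F^*,M)$, and since this group is a $\mathbb{Q}$-vector space and $2^q-1\neq0$ for $q\geq1$, it is zero.

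Granting this vanishing, the spectral sequence is concentrated in the row $q=0$, where $E^2_{p,0}=H_p(F^*,\mathbb{Z})$ with trivial coefficients; hence it collapses, and the edge homomorphism $H_n(F^*\ltimes S,\mathbb{Z})\to H_n(F^*,\mathbb{Z})$ induced by the projection $F^*\ltimes S\to F^*$ is an isomorphism for every $n$. Composing with the canonical section $F^*\hookrightarrow F^*\ltimes S$, a right inverse of this projection, shows that the induced map $H_n(F^*,\mathbb{Z})\to H_n(F^*\ltimes S,\mathbb{Z})=H_n\big(B_{S,T}/U_T^0,\mathbb{Z}\big)$ is an isomorphism, which is the assertion. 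I do not expect a real obstacle: the only slightly delicate points are bookkeeping the $F^*$-action on the exterior powers and ensuring the chosen central element acts by a scalar different from $1$, both of which are immediate once $\mathrm{char}(F)=0$ is in force.
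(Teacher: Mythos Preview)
Your proof is correct and is essentially the same approach as the paper's: the paper simply invokes \cite[Prop.~3.2]{Knudson1} after observing that $B_{S,T}/U_T^0\cong F^*\ltimes S$ with $F^*$ acting by scalar multiplication, and what you have written is precisely the Hochschild--Serre argument that underlies Knudson's proposition (and that the paper itself reproduces in the proof of Lemma~\ref{lemma inv hom 2}).
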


Now, in order to prove Prop. \ref{prop inv hom for Bst}, we check that the canonical homomorphism $B_{S,T} \to B_{S,T}/U_T^{0}$ induces isomorphisms in the associated integral homology groups.
To do so, we first describe an action of $B_{S,T}$ on $U_T^0$.
Indeed, the next result directly follows from Eq. \eqref{eq action of B on u2a}:

\begin{Lemma}\label{lemma action of F* on U0}
The group $B_{S,T}$ acts on $U_T^0$ via $g \cdot v= \lambda^2 v$, where $g=\big( \U (x,y) \tilde{a}(\lambda) \big) \in B_{S,T}$ and $v \in T^0 \cong U_T ^0$.
\end{Lemma}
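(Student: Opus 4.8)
The action referred to is nothing but conjugation inside $B_{S,T}$, so the first thing I would do is make this explicit. By Lemma \ref{lemma quot B/U0} the subgroup $U_T^0$ is normal in $B_{S,T}$, hence $B_{S,T}$ acts on $U_T^0$ by $g\cdot u=gug^{-1}$, and this action is through group automorphisms of $U_T^0$. Next I would transport this along the identification $U_T^0\cong T^0$ recorded just before Lemma \ref{lemma quot B/U0}: the map $\V(v)\mapsto v$ is an isomorphism onto the additive group $T^0=\{y\in T\mid \mathrm{Tr}(y)=0\}$ because $\V(v)\V(v')=\V(v+v')$, so conjugation on $U_T^0$ becomes an action of $B_{S,T}$ on the abelian group $T^0$.

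To compute that action explicitly, I would simply invoke Eq. \eqref{eq action of B on u2a}: for $g=\U(x,y)\tilde a(\lambda)\in B_{S,T}$ and $v\in H(L,K)^0\cap T$ one has $g\V(v)g^{-1}=\V(\lambda^2 v)$, which pulled back along $U_T^0\cong T^0$ reads precisely $g\cdot v=\lambda^2 v$. It then remains to check two trivial points: that $\lambda^2 v\in T^0$, which holds because $T^0$ is an $F$-subspace of $L$ (it is the kernel of the $F$-linear map $\mathrm{Tr}|_T$) and hence is stable under multiplication by $\lambda^2\in F^*\subseteq F$; and that the prescription is unambiguous, i.e. that $g$ determines $\lambda$, which holds since the parametrization $(x,y,\lambda)\mapsto \U(x,y)\tilde a(\lambda)$ of $B_{S,T}$ is injective. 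In particular the formula depends only on $\lambda$ and not on $(x,y)$, so the action factors as $B_{S,T}\twoheadrightarrow F^*$, $g\mapsto\lambda$, followed by the scaling-by-squares action of $F^*$ on $T^0$.

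There is essentially no obstacle here: the entire content is already contained in the normality of $U_T^0$ and in Eq. \eqref{eq action of B on u2a}, and the only thing one must keep straight is the identification $U_T^0\cong T^0$, under which ``conjugation by $g$'' on the group $U_T^0$ is converted into ``multiplication by $\lambda^2$'' on the $F$-vector space $T^0$.
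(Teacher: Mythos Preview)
Your proposal is correct and follows exactly the paper's approach: the lemma is stated as a direct consequence of Eq.~\eqref{eq action of B on u2a}, and your argument simply unpacks that implication via the identification $U_T^0\cong T^0$. The extra checks you add (that $\lambda^2 v\in T^0$ and that $\lambda$ is well-defined) are harmless elaborations of what the paper leaves implicit.
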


The next result, together with Lemma \ref{lemma inv hom in B/U}, proves Prop. \ref{prop inv hom for Bst}.

\begin{Lemma}\label{lemma inv hom 2}
The group homomorphism $B_{S,T} \to B_{S,T}/U_T^{0}$ induces isomorphisms in the associated homology groups, i.e., we have $H_*\big(B_{S,T}, \mathbb{Z}\big) \cong H_*\big( B_{S,T}/U_T^{0}, \mathbb{Z}\big)$.
\end{Lemma}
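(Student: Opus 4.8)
The plan is to analyze the Lyndon--Hochschild--Serre spectral sequence of the extension $1 \to U_T^0 \to B_{S,T} \to B_{S,T}/U_T^0 \to 1$ (which makes sense by Lemma \ref{lemma quot B/U0}, where $U_T^0 \trianglelefteq B_{S,T}$ is proved), namely
$$E^2_{p,q} = H_p\big(B_{S,T}/U_T^0,\, H_q(U_T^0,\mathbb{Z})\big) \;\Longrightarrow\; H_{p+q}(B_{S,T},\mathbb{Z}),$$
and to show that $E^2_{p,q} = 0$ for all $p \geq 0$ and all $q \geq 1$. Granting this, the spectral sequence collapses onto the edge line $q=0$: every differential entering or leaving a position $(n,0)$ has source or target in a row $q \geq 1$, hence vanishes, and one reads off $H_n(B_{S,T},\mathbb{Z}) \cong E^2_{n,0} = H_n(B_{S,T}/U_T^0,\mathbb{Z})$, the isomorphism being the edge homomorphism induced by the quotient map $B_{S,T} \to B_{S,T}/U_T^0$. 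That is exactly the assertion of the lemma.

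To establish the vanishing, I would first note that, since $\mathrm{char}(F)=0$, the abelian group $U_T^0 \cong T^0$ is a $\mathbb{Q}$-vector space, so for $q \geq 1$ the homology group $H_q(U_T^0,\mathbb{Z}) \cong \bigwedge^q T^0$ (exterior power of a $\mathbb{Q}$-vector space) is again a $\mathbb{Q}$-vector space. By Lemma \ref{lemma action of F* on U0} the conjugation action of $B_{S,T}$ on $U_T^0$ is $g \cdot v = \lambda^2 v$ with $g = \U(x,y)\tilde{a}(\lambda)$; it therefore factors through $B_{S,T}/U_T^0 \cong F^{*} \ltimes S$, with the factor $S$ acting trivially, and the induced action on $H_q(U_T^0,\mathbb{Z})$ is multiplication by the scalar $\lambda^{2q}$. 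Consequently the $(B_{S,T}/U_T^0)$-module $H_q(U_T^0,\mathbb{Z})$ is a $\mathbb{Q}$-vector space on which the group acts $\mathbb{Q}$-linearly, and each $E^2_{p,q} = H_p(B_{S,T}/U_T^0, H_q(U_T^0,\mathbb{Z}))$ is then a $\mathbb{Q}$-vector space as well (group homology with coefficients in a $\mathbb{Q}$-module is a $\mathbb{Q}$-module).

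Next I would invoke the classical fact that for any group $G$, any $G$-module $M$, and any $z \in G$, the automorphism of the pair $(G,M)$ given by conjugation by $z$ on $G$ together with the action of $z$ on $M$ induces the identity on $H_*(G,M)$. Apply this with $G = B_{S,T}/U_T^0$, with $M = H_q(U_T^0,\mathbb{Z})$ for $q \geq 1$, and with $z$ the image of $\tilde{a}(\lambda)$ for a scalar $\lambda \in \mathbb{Q}^{*} \subseteq F^{*}$: the action of $z$ on $M$ is multiplication by the rational number $\lambda^{2q}$, and since the $G$-action on $M$ is $\mathbb{Q}$-linear this scalar multiplication is a $G$-module endomorphism, hence induces multiplication by $\lambda^{2q}$ on $H_p(G,M) = E^2_{p,q}$. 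Comparing the two descriptions, $(\lambda^{2q}-1)$ annihilates $E^2_{p,q}$; taking $\lambda = 2$ and using $q \geq 1$, the number $2^{2q}-1$ is a nonzero rational, hence invertible on the $\mathbb{Q}$-vector space $E^2_{p,q}$, so $E^2_{p,q} = 0$.

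I do not expect a genuine obstacle here: this is the same ``kill the higher rows of the Hochschild--Serre spectral sequence by a central scalar'' mechanism used for $\mathrm{SL}_2$ in \cite{Knudson1}. The one point that really must be watched is the role of $\mathrm{char}(F)=0$: it is what guarantees that $T^0$, hence $H_q(U_T^0,\mathbb{Z})$ and the $E^2$-terms, are $\mathbb{Q}$-vector spaces on which the scalars $2^{2q}-1$ act invertibly; in positive characteristic this step would fail. The remaining verification, that the action induced on $H_q(U_T^0,\mathbb{Z})$ is indeed by $\lambda^{2q}$, is routine, being immediate from $H_q(U_T^0,\mathbb{Z}) \cong \bigwedge^q T^0$ and the naturality of exterior powers; and a minor simplification relative to the $\mathrm{SL}_2$ situation is that no auxiliary spectral sequence is needed, because the extra factor $S$ in $B_{S,T}/U_T^0$ acts trivially on $U_T^0$.
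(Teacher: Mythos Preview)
Your proposal is correct and follows essentially the same route as the paper: both analyze the Hochschild--Serre spectral sequence of $1 \to U_T^0 \to B_{S,T} \to B_{S,T}/U_T^0 \to 1$, identify $H_q(U_T^0,\mathbb{Z}) \cong \bigwedge^q T^0$ as a $\mathbb{Q}$-vector space, and kill the rows $q\geq 1$ by invoking \cite[Prop.~8.1]{Brown-Cohomology} (inner automorphisms act trivially on homology) for the element $\tilde a(\lambda)$ with $\lambda$ an integer, yielding annihilation by $\lambda^{2q}-1$. Your write-up is in fact slightly cleaner than the paper's in its bookkeeping of which scalar is acting.
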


\begin{proof}
Since $U_T^{0}$ is a normal subgroup of $B_{S,T}$ we have the (non-split) exact sequence:
$$ 0 \to U_T^{0} \to B_{S,T} \to B_{S,T}/U_T^{0} \to 1. $$
The Hochschild–Serre spectral sequence associated is then:
$$ E^2_{p,q}= H_p \Big( B_{S,T}/U_T^{0} ,\,\, H_q\big(U_T^{0} , \mathbb{Z}\big) \Big) \Rightarrow H_{p+q}\big(B_{S,T}, \mathbb{Z}\big). $$
Since $U_T^{0} \cong (T^0,+)$ is a torsion-free abelian group, we have that $H_q\big(U_T^{0}, \mathbb{Z}\big) \cong \bigwedge^q\big(T^0\big)$.
It follows from Lemma \ref{lemma quot B/U0} that $F^{*}$ injects into $ B_{S,T}/U_T^{0}$.
Moreover, the action of any $\lambda \in F^{*}$ on $v \in T^{0}$ is given by $\alpha \cdot v = \alpha^2 v$, according to Lemma \ref{lemma action of F* on U0}.
Therefore $\lambda \in F^*$ acts on $H_q\big(U_T^{0}, \mathbb{Z}\big)$ as multiplication by $\lambda^{2q}$.

Let us fix $\lambda_0 \in (F^*)^{2q}$ and write $M:=\bigwedge^q\big( T^0 \big) \cong H_q\big(U_T^{0}, \mathbb{Z}\big)$.
Since $\lambda_0$ belongs to $F^* \hookrightarrow B_{S,T}/U_T^{0}$, it follows from \cite[Prop. 8.1]{Brown-Cohomology} that:
\begin{equation}\label{Eq fix homology}
\lambda_0 \cdot z=z, \quad \forall z \in H_*\big(B_{S,T}/U_T^{0}, M \big). 
\end{equation}
Now, recall that $\mathbb{Q}^* \subseteq F^*$, since $\mathrm{char}(F)=0$.
In particular, we can set $\lambda_0=n^{2q}$, for any $n \in \mathbb{Z}\smallsetminus \lbrace -1,1 \rbrace$.
Thus, it follows from Eq. \eqref{Eq fix homology} that:
$$E^2_{p,q}=H_p \Big(  B_{S,T}/U_T^{0} , \,\, H_q\big(U_T^{0}, \mathbb{Z}\big)\Big),$$
is annihilated by $n^{2q}-1$.
Moreover, since $E^2_{p,q}$ is a $\mathbb{Q}$-vector space, we conclude that $E^2_{p,q}=0$, for all $q>0$.
Thus, $H_p\big(B_{S,T}/U_T^{0} , \mathbb{Z}\big) = E_{p,0}^2 \cong H_p\big(B_{S,T}, \mathbb{Z}\big)$ as desired.
\end{proof}

We now turn to the proof of Theorem~\ref{main teo 1}.
In particular, we now assume that $\mathrm{char}(F)=0$.

\begin{proof}[Proof of Th. \ref{main teo 1}]
The group $\Gamma=\mathrm{SU}_3(F[t])$ is the free product of $\mathrm{SO}(q)(F) \cong \mathrm{PGL}_2(F)$ and $B$ amalgamated along their intersection $B_0$, according to Cor. \ref{lemma ABLL amal}.
This amalgamated product yields a Mayer-Vietoris sequence:
\begin{equation}\label{eq exact sequence in homology with coef. in Z}
\cdots \to H_k\big(B_0, \mathbb{Z}\big) \to H_k\big(\mathrm{PGL}_2(F), \mathbb{Z}\big)\oplus H_k\big(B,\mathbb{Z}\big) \to H_k\big(\Gamma, \mathbb{Z}\big) \to \cdots
\end{equation}
It follows from Prop. \ref{prop inv hom for Bst} applied to $S=T=F$ that the map $F^* \to B_0$, $ \lambda \mapsto \tilde{a}(\lambda),$
induces isomorphisms $H^*(B_0, \mathbb{Z}) \cong H^*(F^*, \mathbb{Z})$.
Analogously, it follows from Prop. \ref{prop inv hom for Bst} applied to $S=T=F[\sqrt{T}]$ that $F^* \to B$, $\lambda \mapsto \tilde{a}(\lambda)$ induces $H_*(B, \mathbb{Z}) \cong H_*( F^{*}, \mathbb{Z})$.
Thus, the exact sequence in \eqref{eq exact sequence in homology with coef. in Z} becomes:
\begin{equation}\label{eq exact sequence in homology with coef. in Z 2}
\cdots \to H_k\big(F^*, \mathbb{Z}\big) \to H_k\big(\mathrm{PGL}_2(F), \mathbb{Z}\big)\oplus H_k\big(F^*,\mathbb{Z}\big) \to H_k\big(\Gamma, \mathbb{Z}\big) \to \cdots
\end{equation}
Note that, since the following diagram commutes,
$$
\def\commutatif{\ar@{}[rd]|{\circlearrowleft}}
\xymatrix{
H_k(B_0, \mathbb{Z}) \ar@{^{(}->}[d] \commutatif & H_k(F^*,\mathbb{Z}) \ar[d]^{\mathrm{id}} \ar[l]_{\cong}
\\
H_k(B,\mathbb{Z}) & \ar[l]_{\cong} H_k(F^*,\mathbb{Z})
 }.$$
the group $H_k\big(F^*, \mathbb{Z}\big)$ maps to $H_k\big(\mathrm{PGL}_2(F), \mathbb{Z}\big)\oplus H_k\big(F^*,\mathbb{Z}\big)$ via the identity map in the second factor.
Thus, Th.~\ref{main teo 1} follows as the long exact sequence in Eq. \eqref{eq exact sequence in homology with coef. in Z 2} breaks up into short exact sequences of the form $0\to H_k\big(F^*, \mathbb{Z}\big) \to H_k\big(\mathrm{PGL}_2(F), \mathbb{Z}\big)\oplus H_k\big(F^*,\mathbb{Z}\big) \to H_k\big(\Gamma, \mathbb{Z}\big) \to 0$.
\end{proof}

\section{\texorpdfstring{$\mathrm{SU}_3(F[t,t^{-1}])$ as an amalgamated product.}{SU32}}\label{section amalgam}

In this section we describe the arithmetic group $\mathrm{SU}_3(F[t,t^{-1}])$ as an amalgamated product of simpler subgroups.
To do so, we do not assume $\mathrm{char}(F)=0$.
Let us keep the notations of \S \ref{subsection BT}.
Recall that $K_0=F(\!(t)\!)$ is the completion of $K=F(t)$ with respect to the valuation $\nu_0$ defined by the closed point $0 \in \mathbb{P}^1_F$.
More concretely, $K_0$ is the completion of $K$ with respect to the valuation $\nu_0: K \to \mathbb{Z} \cup \lbrace \infty \rbrace$ given by $\nu_0(t^n \cdot a/b)=n$, where $a,b\in F[t]$ and $(a,t)=(b,t)=1$.
The integer ring $\mathcal{O}_{K_0}$ of $K_0$ is the ring of formal power series $\mathcal{O}_{K_0}=F[\!\lvert t \rvert\! ]$, where $\pi_{K_0}=t$ is a uniformizing parameter.
The valuation $\nu_0$ on $K$ extends to $L=F(\sqrt{t})$ by setting $\nu_0(\sqrt{t})=1/2$.
The completion of $L$ with respect to $\nu_0$ is $L_0=F(\!(\sqrt{t})\!)$ and its integer ring is $\mathcal{O}_{L_0}=F[\!\lvert \sqrt{t} \rvert \!]$, where $\pi_{L_0}=\sqrt{t}$ is a uniformizing parameter.

The Bruhat-Tits building $\mathcal{X}_0$ of $\mathrm{SL}_{3,L_0}$ is the simplicial complex whose vertex set corresponds to the homothety classes $[\Lambda]$ of $\mathcal{O}_{L_0}$-lattices $\Lambda \subset L_0^3$, where $[\Lambda]$ and $[\Lambda']$ are neighbors exactly when there are representatives $\Lambda_0\in [\Lambda]$ and $\Lambda_0'\in[\Lambda']$ such that $\Lambda_0 \subseteq \Lambda_0'$ and $\Lambda_0'/\Lambda_0 \cong \mathcal{O}_{L_0}/\pi_{L_0} \mathcal{O}_{L_0} \cong F$. 
The group $\mathrm{SL}_3(L_0)$ acts on $V(\mathcal{X}_0)$ via $g \cdot [\Lambda]=[g(\Lambda)]$, for all $g\in \mathrm{SL}_3(L_0)$ and all $[\Lambda] \in V(\mathcal{X}_0)$.
This action is simplicial, so that it extends to the full space $\mathcal{X}_0$.
See \cite[\S 6.9.2]{AbramenkoBrown} for more details.
A (simplicial) chamber of $\mathcal{X}_0$ is a simplex of maximal rank.
For example, the equilateral triangle $\mathcal{C}_0 \subset \mathbb{R}^2$ defined by the vertex set $\lbrace v_0, v_1, v_2 \rbrace$, where:
\begin{equation}
v_0= \left[ \bmattrix{\mathcal{O}_{L_0}}{\mathcal{O}_{L_0}}{\mathcal{O}_{L_0}} \right], \quad v_1= \left[ \bmattrix{\mathcal{O}_{L_0}}{\mathcal{O}_{L_0}}{\pi_{L_0}\mathcal{O}_{L_0}} \right] \text{ and } v_2= \left[\bmattrix{\mathcal{O}_{L_0}}{\pi_{L_0}\mathcal{O}_{L_0}}{\pi_{L_0}\mathcal{O}_{L_0}} \right],
\end{equation}
is a chamber of $\mathcal{X}_0$.
Any other chamber of $\mathcal{X}_0$ is $\mathrm{SL}_3(L_0)$-conjugate to $\mathcal{C}_0$, and any two different faces of $\mathcal{C}_0$ fail to be $\mathrm{SL}_3(L_0)$-conjugates.
In other words, $\mathcal{C}_0$ is a fundamental domain for the action of $\mathrm{SL}_3(L_0)$ over $\mathcal{X}_0$. See \cite[1.69]{AbramenkoBrown} for details.
In the sequel, we denote by $e$ the edge of $\mathcal{C}_0$ connecting $v_1$ with $v_2$. See Figure \ref{Figure building SL3}(A).
The Bruhat-Tits tree $\X_0$ of $\mathcal{G}_{K_0}$ injects into the Bruhat-Tits building $\mathcal{X}_0$ of $\mathrm{SL}_{3,L_0}$ in a way that:
\begin{itemize}
    \item the middle point $v_1^{*}$ of $e$ is a vertex of $\X_0$, and
    \item one edge of $\X_0$ consists in the segment $e^{*}$ joining $v_0$ with $v_1^{*}$.
\end{itemize}
Any other edge of $\X_0$ is $\mathcal{G}_{K_0}(K_0)$-conjugate to $e^{*}$, and the vertices $v_0$ and $v_1^{*}$ fail to be  $\mathcal{G}_{K_0}(K_0)$-conjugates.
This construction is a consequence of a more general result due to Prasad and Yu in \cite{Prasad-Yu}.

Let $\tilde{\Gamma}=\mathrm{SU}_3\big(F[t,t^{-1}]\big)$ be the group of $F[t,t^{-1}]$-point of $\mathcal{G}=\mathrm{SU}_{3, \mathbb{P}^1_F}$.
This group corresponds to the subgroup of matrices in $\mathrm{SL}_3\big(F[\sqrt{t},1/\sqrt{t}]\big)$ preserving the hermitian form $h_{F[t,t^{-1}]}$ defined in Eq. \eqref{eq h_R}.

\begin{Lemma}\label{lemma fund domain for H}
The edge $e$ is a fundamental domain for the action of $\tilde{\Gamma}$ on $\X_0$.
\end{Lemma}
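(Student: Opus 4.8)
The plan is to verify, in Bass--Serre terms, that $\tilde\Gamma$ acts on the tree $\X_0$ without inversions, with exactly two orbits of vertices --- represented by $v_0$ and $v_1^{*}$ --- and a single orbit of edges, represented by $e^{*}$ (the edge written $e$ in the statement); together these three properties are precisely the assertion of the lemma. Two of them come for free. Since $\tilde\Gamma\subseteq\mathcal{G}_{K_0}(K_0)$ and every edge of $\X_0$ is $\mathcal{G}_{K_0}(K_0)$-conjugate to $e^{*}$, the two endpoints of any edge of $\X_0$ lie in the two distinct $\mathcal{G}_{K_0}(K_0)$-orbits of $v_0$ and of $v_1^{*}$; hence no element of $\tilde\Gamma$ can interchange the endpoints of an edge (no inversions) and the $\tilde\Gamma$-orbits of $v_0$ and of $v_1^{*}$ are distinct. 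Moreover, once $\tilde\Gamma$ is known to be transitive on $\mathrm{E}(\X_0)$, every vertex --- being an endpoint of some edge --- is $\tilde\Gamma$-equivalent to $v_0$ or to $v_1^{*}$, so there are exactly two vertex orbits. Thus the proof reduces to the single claim that \emph{$\tilde\Gamma$ acts transitively on $\mathrm{E}(\X_0)$}, equivalently --- using transitivity of $\mathcal{G}_{K_0}(K_0)$ on edges, so that $\mathrm{E}(\X_0)\cong\mathcal{G}_{K_0}(K_0)/\mathcal{I}_0$ --- to the identity $\mathcal{G}_{K_0}(K_0)=\tilde\Gamma\cdot\mathcal{I}_0$, where $\mathcal{I}_0=\mathrm{Stab}_{\mathcal{G}_{K_0}(K_0)}(e^{*})$ is the Iwahori subgroup fixing $e^{*}$.

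I would establish this identity by an explicit reduction, in the spirit of the fundamental-domain computations for $\Gamma=\mathrm{SU}_3(F[t])$ on $\X_\infty$ carried out in \cite[\S 11]{ArenasBravoLoiselLucchini} (cf. Lemma \ref{lemma ABLL action}), transported to the place $t=0$: given $g\in\mathcal{G}_{K_0}(K_0)$, one successively left-multiplies it by suitable elements of $\tilde\Gamma$ until it lies in $\mathcal{I}_0$. The elements of $\tilde\Gamma$ available for this are the \emph{global} torus elements $\tilde{a}\big((\sqrt{t})^{n}\big)=\mathrm{diag}\big((\sqrt{t})^{n},(-1)^{n},(-1)^{n}(\sqrt{t})^{-n}\big)$, which have entries in $F[\sqrt{t},1/\sqrt{t}]$, determinant $1$, preserve $h_{F[t,t^{-1}]}$ and translate the standard apartment $\Aa_0$; the unipotent elements $\U(a,b)$ with $(a,b)\in H(L,K)_{F[\sqrt{t},1/\sqrt{t}]}$ and $\V(c)$ with $c\in H(L,K)^{0}\cap F[\sqrt{t},1/\sqrt{t}]$; and the finite group $G_0=\mathrm{SO}(q)(F)\subseteq\Gamma$. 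The torus part of $g$ is cleared by an appropriate $\tilde{a}((\sqrt{t})^{n})$ together with a unit torus element absorbed into $\mathcal{I}_0$; the Weyl-group component, if any, is cleared by the lift $\mathrm{s}\in G_0$ of Eq. \eqref{eq s}; and the unipotent part is cleared by truncating the Laurent expansions of the $\mathcal{U}_{\texttt{a}}$- and $\mathcal{U}_{2\texttt{a}}$-coordinates at the integrality thresholds that cut $\mathcal{U}_{\texttt{a}}\cap\mathcal{I}_0$ and $\mathcal{U}_{2\texttt{a}}\cap\mathcal{I}_0$ out of $\mathcal{U}_{\texttt{a}}(K_0)$ and $\mathcal{U}_{2\texttt{a}}(K_0)$, subtracting off the truncated parts via the corresponding $\U(a,b),\V(c)$.

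The main obstacle is this last, unipotent step: the pair $(a,b)$ one subtracts must lie in $H(L,K)$, i.e. satisfy $\mathrm{N}_{L/K}(a)+\mathrm{Tr}_{L/K}(b)=0$, so the two coordinates cannot be truncated independently. The way around it is that, since $\mathrm{char}(F)\neq 2$, the trace $\mathrm{Tr}_{L/K}\colon F[\sqrt{t},1/\sqrt{t}]\to F[t,t^{-1}]$ is surjective (on $b=b_0+b_1\sqrt{t}$ it equals $2b_0$): given any admissible truncation $a_{+}\in F[\sqrt{t},1/\sqrt{t}]$ of the $\mathcal{U}_{\texttt{a}}$-coordinate, one sets $b_{+}:=-\tfrac12\mathrm{N}_{L/K}(a_{+})\in F[t,t^{-1}]$, so that $(a_{+},b_{+})\in H(L,K)_{F[\sqrt{t},1/\sqrt{t}]}$ and $\U(a_{+},b_{+})\in\tilde\Gamma$; a second truncation against a $\V(c)$ with $\mathrm{Tr}_{L/K}(c)=0$ then disposes of the residual $\mathcal{U}_{2\texttt{a}}$-coordinate. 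Matching these truncations precisely against the valuation thresholds defining $\mathcal{I}_0$ --- equivalently against the lattice-chain description of $e^{*}$ inside $\mathcal{X}_0$ recalled before the statement --- is the computational heart of the argument; since it runs exactly parallel to \cite[\S 11]{ArenasBravoLoiselLucchini}, I would either cite those results directly or reproduce them in the present normalization.
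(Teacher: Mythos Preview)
Your reduction of the lemma to the single claim $\mathcal{G}_{K_0}(K_0)=\tilde\Gamma\cdot\mathcal{I}_0$ is correct, and your proposed explicit reduction --- clearing torus, Weyl and unipotent parts of an arbitrary $g\in\mathcal{G}_{K_0}(K_0)$ by global elements in $\tilde\Gamma$ --- is a viable strategy that would succeed with the care you indicate (in particular the two-step treatment of the unipotent coordinate via $\U(a_+,-\tfrac12\mathrm{N}(a_+))$ followed by a $\V(c)$ is the right way to handle the non-abelian root group). It is, however, a genuinely different and considerably longer route than the paper's.

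The paper's proof is a two-line application of strong approximation: since $\mathcal{G}_K=\mathrm{SU}_3$ is semisimple and simply connected and $\mathcal{G}$ is isotropic at the place $\infty$, Prasad's strong approximation theorem \cite[Thm.~A]{PrasadSA} implies that $\tilde\Gamma=\mathcal{G}(F[t,t^{-1}])$ is dense in $\mathcal{G}_{K_0}(K_0)$. The Iwahori $\mathcal{I}_0$ (and more generally every simplex stabilizer in $\X_0$) is open in $\mathcal{G}_{K_0}(K_0)$, so density immediately gives $\tilde\Gamma\cdot\mathcal{I}_0=\mathcal{G}_{K_0}(K_0)$ --- exactly the identity to which you reduced the problem. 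In effect your computation would be re-proving, by hand and in this one case, the consequence of strong approximation that the paper simply quotes; your argument is more elementary and self-contained, while the paper's is conceptually cleaner and avoids the valuation bookkeeping entirely.
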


\begin{proof}
On one hand, since $e$ is a chamber (edge) of $\X_0$, it follows from \cite[1.69]{AbramenkoBrown} that $e$ is a fundamental domain for the action of $\mathcal{G}_{K_0}(K_0)$ on $\X_0$.
On the other hand, Strong approximation theorem over $\mathrm{SU}_3$ (cf.~\cite[Thm.~A]{PrasadSA}) implies that $\tilde{\Gamma}$ is dense in $\mathcal{G}_{3,K_0}(K_0)$.
Thus, since the $\mathcal{G}_{3,K_0}(K_0)$-stabilizers of any simplex in $\X_0$ is open, we have that $e$ is a fundamental domain for the action of $\tilde{\Gamma}$ on $\X_0$.
\end{proof}
Let us introduce the following subgroups of $\tilde{\Gamma}:$
\begin{align}\label{eq H_0}
\hat{\Gamma} &:= \mscriptsize{ \begin{pmatrix} F[\sqrt{t}] & F[\sqrt{t}] & \sqrt{t}^{-1}F[\sqrt{t}] \\ \sqrt{t}F[\sqrt{t}] & F[\sqrt{t}] & F[\sqrt{t}] \\ \sqrt{t}F[\sqrt{t}] & \sqrt{t}F[\sqrt{t}] & F[\sqrt{t}] \end{pmatrix}}\cap \tilde{\Gamma},\\  \label{eq H_{0,1}}
\Gamma_{0}&:=\Gamma \cap \hat{\Gamma}= \mscriptsize{ \begin{pmatrix} F[\sqrt{t}] & F[\sqrt{t}] & F[\sqrt{t}] \\ \sqrt{t}F[\sqrt{t}] & F[\sqrt{t}] & F[\sqrt{t}] \\ \sqrt{t}F[\sqrt{t}] & \sqrt{t}F[\sqrt{t}] & F[\sqrt{t}] \end{pmatrix}}\cap \tilde{\Gamma}.
\end{align}
The group $\Gamma_{0}=\Gamma_{0}(t)$ is the Hecke congruence subgroup of $\Gamma$, while $\hat{\Gamma}$ is an arithmetic subgroup of $\tilde{\Gamma}$, which is non-isomorphic to $\Gamma$.
Next result describes $\tilde{\Gamma}$ as an amalgamated product of the preceding subgroups.

\begin{figure}
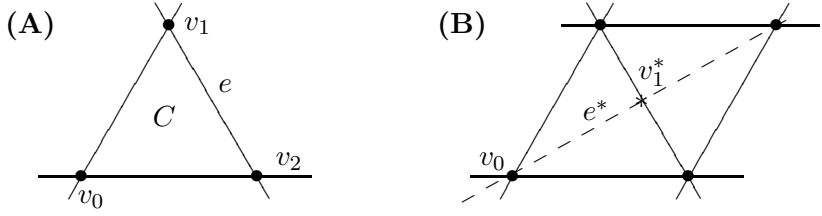

$$
\xygraph{
!{<0cm,0cm>;<1.0cm,0cm>:<0cm,1.0cm>::}
!{(2.8,2) }*+{\textbf{(A)}}="A"
!{(2.8,0) }*+{}="a2"
!{(6.6,0) }*+{}="b2"
!{(3.24,-0.4) }*+{}="c2"
!{(5.56,-0.4) }*+{}="c3"
!{(4.851,2.4) }*+{}="d2"
!{(2.079,2.4) }*+{}="e2"
!{(4.381,2.4) }*+{}="e3"
!{(6,-0.4) }*+{}="f3"
!{(2.7,-0.4) }*+{}="g1"
!{(7.161,2.133) }*+{}="g2"
!{(3.465,0) }*+{\bullet}="h2"
!{(3.6,-0.3) }*+{v_0}="h2n"
!{(5.36,1.2) }*+{e}="j2"
!{(4.62,2) }*+{\bullet}="k3"
!{(5.775,0) }*+{\bullet}="k4"
!{(5,2.0) }*+{v_1}="k3n"
!{(6.2,0.2) }*+{v_2}="k4n"
!{(4.56,0.8) }*+{C}="j3"
"a2"-"b2"
"c2"-"d2" 
"e3"-"f3"
} \quad
\xygraph{
!{<0cm,0cm>;<1.0cm,0cm>:<0cm,1.0cm>::}
!{(2.8,2) }*+{\textbf{(B)}}="B"
!{(2.8,0) }*+{}="a2"
!{(4,2) }*+{}="a3"
!{(6.6,0) }*+{}="b2"
!{(7.6,2) }*+{}="b3"
!{(3.24,-0.4) }*+{}="c2"
!{(5.56,-0.4) }*+{}="c3"
!{(4.851,2.4) }*+{}="d2"
!{(7.161,2.4) }*+{}="d3"
!{(2.079,2.4) }*+{}="e2"
!{(4.381,2.4) }*+{}="e3"
!{(6,-0.4) }*+{}="f3"
!{(2.7,-0.4) }*+{}="g1"
!{(7.161,2.133) }*+{}="g2"
!{(3.465,0) }*+{\bullet}="h2"
!{(6.93,2) }*+{\bullet}="h3"
!{(3.2,0.2) }*+{v_0}="h2n"
!{(5.16,1) }*+{\ast}="j2"
!{(5.3,1.38) }*+{v_1^{*}}="j2"
!{(4.62,2) }*+{\bullet}="k3"
!{(5.775,0) }*+{\bullet}="k4"
!{(4.56,0.88) }*+{e^{*}}="j3"
"a2"-"b2" "a3"-"b3"
"c2"-"d2" "c3"-"d3"
"e3"-"f3"
"g1"-@{--}"g2"
}
$$
\caption{In the left side, the fundamental chamber of the building of $\mathrm{SL}_3$. 
In the right side, the tree of $\mathrm{SU}_3$ insides the building for $\mathrm{SL}_3$.}\label{Figure building SL3}
\end{figure}

\begin{Lemma}\label{lemma decomposition of H}
The group $\tilde{\Gamma}$ is isomorphic to $\Gamma \ast_{\Gamma_0} \hat{\Gamma}$.
\end{Lemma}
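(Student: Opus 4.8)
The plan is to read the decomposition off the action of $\tilde\Gamma$ on the tree $\X_0$ by means of Bass--Serre theory, once the relevant vertex and edge stabilizers have been identified. By Lemma~\ref{lemma fund domain for H}, the edge $e^{*}$ of $\X_0$ joining $v_0$ and $v_1^{*}$ is a fundamental domain for the action of $\tilde\Gamma$ on $\X_0$. Moreover, $v_0$ and $v_1^{*}$ are not $\mathcal{G}_{K_0}(K_0)$-conjugate, hence a fortiori not $\tilde\Gamma$-conjugate, so no element of $\tilde\Gamma$ inverts $e^{*}$. The structure theorem for a group acting on a tree with a single edge as fundamental domain (cf.~\cite[\S 5]{SerreTrees}) then gives
$$\tilde\Gamma \;\cong\; \mathrm{Stab}_{\tilde\Gamma}(v_0) \;\ast_{\mathrm{Stab}_{\tilde\Gamma}(e^{*})}\; \mathrm{Stab}_{\tilde\Gamma}(v_1^{*}),\qquad \mathrm{Stab}_{\tilde\Gamma}(e^{*}) = \mathrm{Stab}_{\tilde\Gamma}(v_0)\cap\mathrm{Stab}_{\tilde\Gamma}(v_1^{*}).$$
It thus remains to identify these three stabilizers with $\Gamma$, $\hat\Gamma$ and $\Gamma_0$.

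For $v_0 = [\mathcal{O}_{L_0}^3]$: every $g\in\tilde\Gamma$ has $\det g=1$, so a homothety $g\,\mathcal{O}_{L_0}^3 = c\,\mathcal{O}_{L_0}^3$ forces $3\nu_0(c)=0$; hence $g$ stabilizes the lattice $\mathcal{O}_{L_0}^3$ itself, i.e. $g\in\mathrm{SL}_3(\mathcal{O}_{L_0})$, equivalently all entries of $g$ lie in $F[\sqrt t,1/\sqrt t]\cap\mathcal{O}_{L_0}=F[\sqrt t]$. Since the unitarity conditions cutting out $\mathrm{SU}_3$ are polynomial identities in the matrix entries, such a $g$ is exactly an element of $\mathrm{SU}_3(F[t])=\Gamma$; therefore $\mathrm{Stab}_{\tilde\Gamma}(v_0)=\Gamma$.

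For $v_1^{*}$: it is the midpoint of the edge $e$ of the $\mathrm{SL}_3$-building $\mathcal{X}_0$ with endpoints $v_1=[\,\mathrm{diag}(1,1,\sqrt t)\,\mathcal{O}_{L_0}^3\,]$ and $v_2=[\,\mathrm{diag}(1,\sqrt t,\sqrt t)\,\mathcal{O}_{L_0}^3\,]$. These two lattice classes have distinct types in $\mathbb{Z}/3$, and $\mathrm{SL}_3(L_0)$ acts type-preservingly, so no element of $\tilde\Gamma\subseteq\mathrm{SL}_3(L_0)$ exchanges $v_1$ and $v_2$; consequently $\mathrm{Stab}_{\tilde\Gamma}(v_1^{*})=\mathrm{Stab}_{\tilde\Gamma}(v_1)\cap\mathrm{Stab}_{\tilde\Gamma}(v_2)$. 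Conjugating $\mathrm{SL}_3(\mathcal{O}_{L_0})$ by $\mathrm{diag}(1,1,\sqrt t)$ and by $\mathrm{diag}(1,\sqrt t,\sqrt t)$ and intersecting the two resulting lattice-chain stabilizers describes the matrices of $\mathrm{SL}_3(L_0)$ fixing both $v_1$ and $v_2$; intersecting with $\tilde\Gamma$ and using $F[\sqrt t,1/\sqrt t]\cap\sqrt t^{\,j}\mathcal{O}_{L_0}=\sqrt t^{\,j}F[\sqrt t]$ for $j\in\{-1,0,1\}$ yields precisely the group $\hat\Gamma$ of Eq.~\eqref{eq H_0}. Finally $\mathrm{Stab}_{\tilde\Gamma}(e^{*})=\Gamma\cap\hat\Gamma=\Gamma_0$ by definition (see Eq.~\eqref{eq H_{0,1}}), so the amalgam above becomes $\tilde\Gamma\cong\Gamma\ast_{\Gamma_0}\hat\Gamma$.

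The main obstacle is the bookkeeping in the last paragraph: one must extract the stabilizer of $v_1^{*}$ using the ambient $\mathrm{SL}_3$-building (where it appears as an edge stabilizer of $\mathcal{X}_0$, a form in which the matrix description is transparent) rather than the intrinsic parahoric description in $\X_0$, check that passing from $\mathrm{SL}_3(\mathcal{O}_{L_0})$-lattice-chain stabilizers down to $\tilde\Gamma$ inserts exactly the ring $F[\sqrt t]$ in the right entries, and exclude vertex swaps and edge inversions. The type argument handles the last point cleanly, and the rest is a direct, if slightly tedious, matrix computation.
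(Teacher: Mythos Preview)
Your proof is correct and follows essentially the same approach as the paper: apply Bass--Serre theory to the edge fundamental domain from Lemma~\ref{lemma fund domain for H}, then identify the three stabilizers with $\Gamma$, $\hat\Gamma$, $\Gamma_0$ by passing through the ambient $\mathrm{SL}_3$-building and intersecting lattice stabilizers with $\tilde\Gamma$ via $F[\sqrt t,1/\sqrt t]\cap\sqrt t^{\,j}\mathcal{O}_{L_0}=\sqrt t^{\,j}F[\sqrt t]$. Your type-preservation argument ruling out a swap of $v_1$ and $v_2$ is a welcome addition---the paper simply asserts $\mathrm{Stab}_{\tilde\Gamma}(v_1^{*})=\mathrm{Stab}_{\mathrm{SL}_3(L_0)}(e)\cap\tilde\Gamma$ without spelling this out.
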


\begin{proof}
By applying the Bass-Serre theory (cf. \cite[\S 6, Theo. 6]{SerreTrees}) on the fundamental domain described in Lemma \ref{lemma fund domain for H}, we get that $\tilde{\Gamma}$ is isomorphic to the free product of $\mathrm{Stab}_{\tilde{\Gamma}}(v_0)$ with $\mathrm{Stab}_{\tilde{\Gamma}}(v_1^{*})$ amalgamated by $\mathrm{Stab}_{\tilde{\Gamma}}(e^{*})$.
Thus, we just need to compute the preceding groups.
Indeed, it is straightforward that $\mathrm{Stab}_{\mathrm{SL}_3(L_0)}(v_0)=\mathrm{SL}_3(\mathcal{O}_{L_0})$.
Moreover, since the $\mathrm{SL}_3(L_0)$-stabilizer of $e$ equals $\mathrm{Stab}_{\mathrm{SL}_3(L_0)}(v_1) \cap \mathrm{Stab}_{\mathrm{SL}_3(L_0)}(v_2)$, we have:
\begin{equation*}
   \mathrm{Stab}_{\mathrm{SL}_3(L_0)}(e)= \mscriptsize{ \begin{pmatrix} \mathcal{O}_{L_0} & \mathcal{O}_{L_0} & \pi_{L_0}^{-1}\mathcal{O}_{L_0} \\ \pi_{L_0}\mathcal{O}_{L_0} & \mathcal{O}_{L_0} & \mathcal{O}_{L_0} \\ \pi_{L_0} \mathcal{O}_{L_0} & \pi_{L_0} \mathcal{O}_{L_0} & \mathcal{O}_{L_0}\end{pmatrix}} \cap \mathrm{SL}_3(L_0).
\end{equation*}
By definition of stabilizers we have that $\mathrm{Stab}_{\tilde{\Gamma}}(v_0)=\mathrm{Stab}_{\mathrm{SL}_3(L_0)}(v_0)\cap \tilde{\Gamma}$.
Since $v_1^*$ is the middle point of $e$, we get that $\mathrm{Stab}_{\tilde{\Gamma}}(v_1^{*})=\mathrm{Stab}_{\mathrm{SL}_3(L_0)}(e)\cap \tilde{\Gamma}$.
Since:
\begin{align*}
F[\sqrt{t}]&=\mathcal{O}_{L_0} \cap F[\sqrt{t},1/\sqrt{t}], \\
\sqrt{t}F[\sqrt{t}]&=\pi_{L_0}\mathcal{O}_{L_0} \cap F[\sqrt{t},1/\sqrt{t}], \\
(1/\sqrt{t})F[\sqrt{t}]&=\pi_{L_0}^{-1}\mathcal{O}_{L_0} \cap F[\sqrt{t},1/\sqrt{t}],
\end{align*}
we conclude that $\Gamma=\mathrm{Stab}_{\tilde{\Gamma}}(v_0)$ and $\hat{\Gamma}=\mathrm{Stab}_{\tilde{\Gamma}}(v_1^{*})$.
Moreover, since $\Gamma_0=\Gamma \cap \hat{\Gamma}$, we obtain that $\Gamma_0=\mathrm{Stab}_{\tilde{\Gamma}}(e^*)$, whence the result follows.
\end{proof}

\subsection{On the action of \texorpdfstring{$\hat{\Gamma}$}{G} on the Bruhat-Tits tres}\label{subsection action of G'}

In this section, we focus on the description of the arithmetic group $\hat{\Gamma}$ via its action on the Bruhat-Tits tree $\X_{\infty}$.
Let us recall that $K_{\infty}=F(\!(1/t)\!)$
is the completion of $K=F(t)$ with respect to the valuation $\nu_{\infty}: K \to \mathbb{Z} \cup \lbrace \infty \rbrace$ given by $\nu_{\infty}(a/b)=\deg(b)-\deg(a)$, where $a,b\in F[t]$ and $(a,b)=1$.
The integer ring $\mathcal{O}_{K_{\infty}}$ of $K_{\infty}$ is the ring of Laurent series $F[\!\lvert 1/t \rvert\!]$.
In particular $\pi_{K_{\infty}}=1/t$ is a uniformizing parameter in $\mathcal{O}_{K_{\infty}}$.
The quadratic extension $L_{\infty}$ of $K_{\infty}$ is $F(\!(1/\sqrt{t})\!)$, where $\mathcal{O}_{L_{\infty}}=F[\!\lvert 1/\sqrt{t}\rvert\!]$.
Then, a uniformizing parameter in $\mathcal{O}_{K_\infty}$ is $\pi_{L_{\infty}}=1/\sqrt{t}$.

In the sequel, we denote by $S$ the ring $S=F[\sqrt{t}]$ and by $J$ its principal ideal $J=\pi_{L_{\infty}}^{-1}S=\sqrt{t} F[\sqrt{t}]$.
In particular, $J^{-1}=\pi_{L_{\infty}}S=(1/\sqrt{t}) F[\sqrt{t}]$ and
$$\hat{\Gamma} = \mscriptsize{ \begin{pmatrix} S & S & J^{-1} \\ J & S & S \\ J & J & S \end{pmatrix}}\cap \tilde{\Gamma}. $$
As in \S \ref{subsection BT}, here we enumerate the vertices in $\Aa_{\infty}$ by writing $V(\Aa_{\infty})=\lbrace v_n \rbrace_{n=-\infty}^\infty$, where $v_i$ and $v_{i+1}$ are neighbors and $v_0$ is the unique vertex in $\Aa_{\infty}$ satisfying $\texttt{a}(v_0)=0$.
In order to describe a fundamental domain for the action of $\hat{\Gamma}$ on $\X_{\infty}$, we start by describing the $\hat{\Gamma}$-stabilizers of some suitable vertices $v \in \Aa_{\infty}$.

\begin{Lemma}
For each $n \geq 0$, the group $\mathrm{Stab}_{\hat{\Gamma}}(v_n)$ equals
$$\hat{G}_{n}:=\left \lbrace\U(x,y) \Tilde{a}(\lambda) \big\vert \, (x,y) \in H(L,K)_{S\times J^{-1}}, \,\, \nu_{\infty}(y)\geq -n/2, \,\, \lambda\in F^{*} \right \rbrace,$$
while for $n=-1$, the group $\mathrm{Stab}_{\hat{\Gamma}}(v_n)$ equals: 
$$\hat{G}_{-1}:= \left \lbrace \sbmattrix{a}{0}{\sqrt{t}^{-1}b}{0}{1}{0}{\sqrt{t}c}{0}{d} \Bigg{\vert} \, a,b,c,d \in F, ad-bc=1 \right \rbrace \cong \mathrm{SL}_2(F).$$
\end{Lemma}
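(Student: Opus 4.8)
Since $\hat{\Gamma}\leq\tilde{\Gamma}\leq\mathcal{G}_{K_{\infty}}(K_{\infty})$, for every vertex $v$ of $\X_{\infty}$ one has $\mathrm{Stab}_{\hat{\Gamma}}(v)=\hat{\Gamma}\cap\mathrm{Stab}_{\mathcal{G}_{K_{\infty}}(K_{\infty})}(v)$, so I would establish the two inclusions $\hat{G}_{n}\subseteq\mathrm{Stab}_{\hat{\Gamma}}(v_n)$ and $\mathrm{Stab}_{\hat{\Gamma}}(v_n)\subseteq\hat{G}_{n}$ separately (and likewise for $n=-1$). The whole mechanism is to play the ``integrality at $0$'' built into the matrix shape defining $\hat{\Gamma}$ against the ``integrality at $\infty$'' forced by fixing $v_n$ inside $\X_{\infty}$. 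This runs exactly parallel to the computation $\mathrm{Stab}_{\Gamma}(v_n)=G_{n}$ recalled in Lemma~\ref{lemma ABLL action} and carried out in \cite[\S 11.1]{ArenasBravoLoiselLucchini}; the only new feature is that the stabilizer of $v_1^{*}$ in $\X_0$ relaxes the $(1,3)$-entry from $S$ to $J^{-1}$ while tightening the sub-diagonal from $S$ to $J$.

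\textbf{The inclusion $\supseteq$.} For $n\geq 0$, given $g=\U(x,y)\tilde{a}(\lambda)$ with $(x,y)\in H(L,K)_{S\times J^{-1}}$, $\nu_{\infty}(y)\geq -n/2$ and $\lambda\in F^{*}$, I would note that $\bar{\lambda}=\lambda$ (as $\lambda\in F^{*}\subseteq K^{*}$), so by the formula of \S\ref{subsection radical datum} we have $$g=\sbmattrix{\lambda}{-\bar{x}}{\lambda^{-1}y}{0}{1}{\lambda^{-1}x}{0}{0}{\lambda^{-1}}.$$ All entries lie in $S$ except the $(1,3)$-entry, which lies in $J^{-1}$, and the sub-diagonal vanishes, so $g$ has the $\hat{\Gamma}$-shape; as $g$ also preserves $h$ and has entries in $F[\sqrt{t},1/\sqrt{t}]$, we get $g\in\hat{\Gamma}$. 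That $g$ fixes $v_n$ is clear: $\tilde{a}(\lambda)$ fixes the whole apartment $\Aa_{\infty}$ because $\nu_{\infty}(\lambda)=0$, while $\nu_{\infty}(y)\geq -n/2$ is exactly the condition putting $\U(x,y)$ into the subgroup $\mathcal{U}_{\texttt{a},v_n}(K_{\infty})$ of \S\ref{subsection BT}. For $n=-1$, a direct computation shows that $\ssbmatrix{a}{b}{c}{d}\mapsto\sbmattrix{a}{0}{\sqrt{t}^{-1}b}{0}{1}{0}{\sqrt{t}c}{0}{d}$ preserves $h$ precisely when $ad-bc=1$; the image visibly has the $\hat{\Gamma}$-shape and lies in $\tilde{\Gamma}$, giving the injective homomorphism $\mathrm{SL}_{2}(F)\hookrightarrow\hat{\Gamma}$ with image $\hat{G}_{-1}$, and $\hat{G}_{-1}$ fixes $v_{-1}$ as one checks on the generators of $\mathrm{SL}_{2}(F)$ (the diagonal torus landing in $\tilde{a}(F^{*})$, which fixes $\Aa_{\infty}$, and the two unipotent subgroups landing in the $v_{-1}$-bounded parts of $\mathcal{U}_{\pm\texttt{a}}(K_{\infty})$).

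\textbf{The inclusion $\subseteq$ (the substantive step).} Let $g\in\hat{\Gamma}$ fix $v_n$. I would use that $\mathrm{Stab}_{\mathcal{G}_{K_{\infty}}(K_{\infty})}(v_n)$ is the parahoric attached to $v_n$, whose Bruhat factorization — in terms of $T_0$, the bounded root subgroups $\mathcal{U}_{\pm\texttt{a},v_n}$, $\mathcal{U}_{\pm2\texttt{a},v_n}$ of \S\ref{subsection BT}, and, when $v_n$ is special, the Weyl lift $\mathrm{s}$ of \eqref{eq s} — shows that every one of its elements has $\nu_{\infty}\geq 0$ on its diagonal and sub-diagonal entries, and more generally gives an explicit lower bound for $\nu_{\infty}$ of each entry; on the other hand $g\in\hat{\Gamma}$ forces the entries of $g$ to be Laurent polynomials in $\sqrt{t}$ obeying the corresponding $\nu_{0}$-bounds. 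For $n\geq 0$ these two constraints cannot both hold at a nonzero value in any sub-diagonal slot (a nonzero element of $J=\sqrt{t}F[\sqrt{t}]$ has $\nu_{\infty}\leq-\tfrac12$), so $g$ is upper triangular; the same pinch then forces each diagonal entry into $F[\sqrt{t}]\cap\mathcal{O}_{L_{\infty}}=F$, so $g=\U(x,y)\tilde{a}(\lambda)$ with $\lambda=g_{11}\in F^{*}$, and the surviving $\nu_{\infty}$-bound on the $(1,3)$-entry reads $\nu_{\infty}(y)\geq -n/2$; hence $g\in\hat{G}_{n}$. For $n=-1$ the same confrontation, now with the parahoric of the \emph{second} special type, kills the four entries outside $\{(1,1),(1,3),(2,2),(3,1),(3,3)\}$ and pins $g_{11},g_{33}\in F$, $g_{13}\in\sqrt{t}^{-1}F$, $g_{31}\in\sqrt{t}F$; the Hermitian identity together with $\det g=1$ then gives $g_{22}=1$ and the $\det=1$ condition on the $\langle e_1,e_3\rangle$-block, i.e.\ $g\in\hat{G}_{-1}$. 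Identifying $\hat{G}_{-1}\cong\mathrm{SL}_{2}(F)$ is the exceptional isomorphism $\mathrm{SU}_{2}\cong\mathrm{SL}_{2}$ for the split Hermitian plane $\langle e_1,e_3\rangle$, made explicit by the map above (compare Lemma~\ref{lemma SO3 is PSL2}).

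\textbf{The hard part.} The delicate work is entirely on the $\subseteq$ side: one must describe $\mathrm{Stab}_{\mathcal{G}_{K_{\infty}}(K_{\infty})}(v_n)$ accurately enough to extract the entrywise $\nu_{\infty}$-bounds — in particular keeping track of the ``large'' Bruhat cell $\mathcal{U}^{+}T_0\,\mathrm{s}\,\mathcal{U}^{+}$, which occurs exactly at the two special vertices $v_0$ and $v_{-1}$ — and then verify that intersecting those bounds with the $\hat{\Gamma}$-shape leaves precisely $\hat{G}_{n}$ (resp.\ $\hat{G}_{-1}$), no more and no less. Since $v_{-1}$ has a local type different from $v_0$, its parahoric has a genuinely different shape; it is this asymmetry that yields $\mathrm{SL}_{2}(F)$ at $v_{-1}$ where $v_0$ carried $\mathrm{PGL}_{2}(F)$, and keeping it straight is the crux.
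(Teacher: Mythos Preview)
Your approach is essentially the paper's: intersect the $\hat{\Gamma}$-shape (integrality at $0$) with the parahoric at $v_n$ in $\X_{\infty}$ (integrality at $\infty$) and let the valuation squeeze kill the sub-diagonal. One correction and one methodological difference are worth noting.

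The correction: the paper uses the factorization $\mathrm{Stab}_{\mathcal{G}_{K_{\infty}}(K_{\infty})}(v_n)=U_{-\texttt{a},n}\,U_{\texttt{a},n}\,N_n$ with $N_n=\{1,\tilde{a}(r)\mathrm{s}\}\cdot T_b$ (where $\nu_{\infty}(r)=-n/2$) for \emph{every} $n\geq 0$, not only at special vertices. So the ``large cell'' $\tilde{a}(\lambda)\mathrm{s}$ must be handled for all $n\geq 0$, contrary to your parenthetical. The paper does this by writing out $g=\mathrm{u}_{-\texttt{a}}(u,v)\,\U(x,y)\,\tilde{a}(\lambda)\mathrm{s}$ explicitly and checking that the $\hat{\Gamma}$-constraints still force the three sub-diagonal entries to have $\nu_{\infty}\geq 0$ (hence to vanish, being in $J$), so that $g$ lands in the Borel after all. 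Your claimed entrywise bound $\nu_{\infty}\geq 0$ on the sub-diagonal is thus true, but it is not free from the factorization alone; it requires precisely this case analysis.

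The methodological difference: for $n=-1$ the paper does \emph{not} continue with the $\mathrm{SU}_3$-parahoric. Instead it computes $\mathrm{Stab}_{\mathrm{SL}_3(L_{\infty})}(v_{-1})$ directly as a lattice stabilizer, intersects entrywise with the $\hat{\Gamma}$-matrix using $\mathcal{O}_{L_{\infty}}\cap S=F$, $\pi_{L_{\infty}}\mathcal{O}_{L_{\infty}}\cap S=\mathcal{O}_{L_{\infty}}\cap J=\{0\}$, $J\cap\pi_{L_{\infty}}^{-1}\mathcal{O}_{L_{\infty}}=\sqrt{t}\,F$, and $J^{-1}\cap\pi_{L_{\infty}}\mathcal{O}_{L_{\infty}}=\sqrt{t}^{-1}F$, and only then imposes the hermitian relation $g\Phi\bar{g}^{t}=\Phi$ together with $\det g=1$ to pin down $a_{22}=1$. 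Your parahoric route would reach the same conclusion, but the lattice computation is shorter and avoids tracking the second Bruhat cell.
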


\begin{proof} We divide this proof in two parts, according if $n\geq 0$ or $n=-1$.
Firstly, assume that $n\geq 0$. 
By definition, we have $\mathrm{Stab}_{\hat{\Gamma}}(v_n)=\hat{\Gamma} \cap \mathrm{Stab}_{\mathcal{G}_{K_{\infty}}(K_{\infty})}(v_n)$.
As in \S \ref{subsection BT}, we denote by $N$ the group of $K_{\infty}$-points of $\mathcal{N}_{\mathcal{G}_{K_{\infty}}}(\mathcal{S}_{K_{\infty}})$.
The bounded torus $T_b=\mathcal{T}_{K_{\infty}}(K_{\infty})_{\mathrm{bound}}$ is the set of diagonal matrices of the form $\tilde{a}(\lambda)$, where $\lambda \in \mathcal{O}_{L_{\infty}}^{*}$.
It follows from \cite[9.3(i) and 8.10 (ii)]{Landvogt} that $\mathrm{Stab}_{\mathcal{G}_{K_{\infty}}(K_{\infty})}(v_n)$ decomposes as $U_{-a,n} U_{a,n} N_{n}$, where:
\begin{align*}\label{eqn def U_a_lambda}
\begin{split}
	U_{-\texttt{a},n}
	&= \big\lbrace
		\mathrm{u}_{-\texttt{a}}(u,v) \vert \, (u,v) \in H(L_{\infty}, K_{\infty}), \,
		\nu_{\infty}(v) \geq n/2
	\big \rbrace,\\
	U_{\texttt{a},n}
	&= \big \lbrace
	\U(x,y) \vert \, (x,y) \in
        H(L_{\infty}, K_{\infty}), \,
  \nu_{\infty}(y) \geq -n/2
	\big \rbrace, \\
	N_{n}
	&= \mathrm{Stab}_{N}(v_n)
	= \{1, \widetilde{a}(r) \mathrm{s}\} \cdot T_b,
 \end{split}
\end{align*}
with $r\in L_{\infty}^*$ and $\nu_{\infty}(r) = -n/2$ according to \cite[8.6(ii), 4.21(iii) and 4.14(i)]{Landvogt}.
Thus, it is clear that:
$$\hat{G}_{n} \subseteq U_{\texttt{a},n} T_b \cap \hat{\Gamma} \subseteq  \mathrm{Stab}_{\hat{\Gamma}}(v_n).$$
In order to prove the inverse contention, let $g \in \mathrm{Stab}_{\hat{\Gamma}}(v_n)$ and write $g=\mathrm{u}_{-\texttt{a}}(u,v) \U(x,y) \mathrm{m}$, with $\mathrm{u}_{-\texttt{a}}(u,v) \in U_{-\texttt{a},n}$, $\U(x,y) \in U_{\texttt{a},n}$ and $\mathrm{m} \in N_n$.
First, assume that $\mathrm{m} \in T_b$, so that $\mathrm{m}=\tilde{a}(\lambda)$, with $\lambda\in \mathcal{O}_{L_{\infty}}^{*}$.
Then:
\begin{equation*}
    g = \sbmattrix{1}{0}{0}{u}{1}{0}{v}{-\bar{u}}{1}
    \sbmattrix{1}{-\bar{x}}{y}{0}{1}{x}{0}{0}{1}
    \sbmattrix{\lambda}{0}{0}{0}{\bar{\lambda}/\lambda}{0}{0}{0}{1/\bar{\lambda}}
    =\sbmattrix{\lambda}{-(\bar{\lambda}/\lambda)\bar{x}}{(1/\bar{\lambda})y}{\lambda u}{(\bar{\lambda}/\lambda)(1-\bar{x}u)}{(1/\bar{\lambda})(x+uy)}{\lambda v}{-(\bar{\lambda}/\lambda)(\bar{u}+v\bar{x})}{(1/\bar{\lambda})(1-\bar{u}x+vy)}.
\end{equation*}
This implies that $\lambda \in S \cap \mathcal{O}_{L_{\infty}}^{*}=F^{*}$. Since $\nu_{\infty}( J \smallsetminus \lbrace 0 \rbrace) \leq -1/2$ and $\nu_{\infty}(v) \geq n/2\geq 0$, we obtain that $v=0$. Since $\mathrm{N}(u)+\mathrm{Tr}(v)=0$, we also have $u=0$.
This proves that $g\in \hat{G}_{n}$ when $\mathrm{m} \in T_b$.

Now, assume that $\mathrm{m} \in (\widetilde{a}(r) \mathrm{s} )\cdot T_b$, where $\nu_{\infty}(r)= -n/2$.
Since $\mathrm{s} \cdot \tilde{a}(\mu)= \tilde{a}(\bar{\mu}^{-1}) \cdot \mathrm{s}$, for all $\mu \in L_{\infty}^{*}$, we can write
$\mathrm{m} = \tilde{a}(\lambda) \mathrm{s}$, for some $\lambda\in L_{\infty}^{*}$ with $\nu_{\infty}(\lambda) \geq  \nu_{\infty}(r) = -n/2$.
Then:
\begin{equation*}
    g =\sbmattrix{-(1/\bar{\lambda}) y}{(\bar{\lambda}/\lambda)\bar{x}}{-\lambda}{-(1/\bar{\lambda})(x+uy)}{-(\bar{\lambda}/\lambda)(1-\bar{x}u) }{-\lambda u}{-(1/\bar{\lambda})(1-\bar{u}x+vy)}{(\bar{\lambda}/\lambda)(\bar{u}+v\bar{x})}{-\lambda v} \in \hat{\Gamma}.
\end{equation*}
Note that $\lambda v \in S$ and $(1/\bar{\lambda})y \in S$.
Then, we have $\nu_{\infty}(\lambda v), \nu_{\infty}((1/\bar{\lambda})y) \leq 0$.
Moreover, since $\nu_{\infty}(\lambda)\geq -n/2$ and $\nu_{\infty}(v) \geq n/2$, we have that $\nu_{\infty}(\lambda v)\geq 0$.
Thus $\nu_{\infty}(\lambda)=-n/2$ and $\nu_{\infty}(v)=n/2$.
By an analogous argument, applied to $(1/\bar{\lambda})y \in S$, we get that $\nu_{\infty}(y)=-n/2$.
It follows from the relations $\mathrm{N}(u)=-\mathrm{Tr}(v)$ and $\mathrm{N}(x)=-\mathrm{Tr}(y)$ that $\nu_{\infty}(u) \geq n/4$ and $\nu_{\infty}(x)\geq -n/4$.
This implies the following:
$$\nu_{\infty}\big((1/\bar{\lambda})(x+uy)\big) \geq 0, \quad \nu_{\infty}\big((\bar{\lambda}/\lambda)(\bar{u}+v\bar{x})\big) \geq 0, \text{ and } \nu\big((1/\bar{\lambda})(1-\bar{u}x+vy)\big) \geq 0.$$
However $(1/\bar{\lambda})(x+uy), (\bar{\lambda}/\lambda)(\bar{u}+v\bar{x})$ and $ (1/\bar{\lambda})(1-\bar{u}x+vy)$ belong to $J$, which implies that these element are $0$.
Thus $g$ belongs to the Borel subgroup $\mathcal{B}(K)$ of $\mathcal{G}(K)$. 
In other words $g=\U(x_1, y_1) \Tilde{a}(\lambda_1)$, for some $(x_1,y_1) \in H(L,K)$ and some $\lambda_1 \in L^*$.
Since $\lambda_1=-(1/\bar{\lambda}) y \in S \cap \mathcal{O}_{L_\infty}$, we deduce that $\lambda_1\in F^*$.
Moreover, since $x_1=-\lambda u \in S$ and $y_1=-\lambda \in J^{-1}$, we obtain that $(x_1,y_1) \in H(L,K)_{S \times J^{-1}}$.
Finally, since $\nu(y_1)\geq -n/2$, we conclude that $g\in \hat{G}_n$, in either case.

Now, we prove that $\mathrm{Stab}_{\hat{\Gamma}}(v_{-1})=\hat{G}_{-1}$.
Indeed, we can write $\mathrm{Stab}_{\hat{\Gamma}}(v_{-1})$ as $\hat{\Gamma} \cap \mathrm{Stab}_{\mathrm{SL}_3(L_{\infty})}(v_{-1})$, where:
$$\mathrm{Stab}_{\mathrm{SL}_3(L_{\infty})}(v_{-1})= \sbmattrix{ \mathcal{O}_{L_{\infty}} }{ \pi_{L_{\infty}}\mathcal{O}_{L_{\infty}} }{ \pi_{L_{\infty}} \mathcal{O}_{L_{\infty}} }{ \mathcal{O}_{L_{\infty}} }{ \mathcal{O}_{L_{\infty}} }{ \pi_{L_{\infty}}\mathcal{O}_{L_{\infty}} }{ \pi_{L_{\infty}}^{-1} \mathcal{O}_{L_{\infty}} }{\mathcal{O}_{L_{\infty}} }{ \mathcal{O}_{L_{\infty}} } \cap \mathrm{SL}_3(L_{\infty}).$$
Thus, we have:
$$\mathrm{Stab}_{\hat{\Gamma}}(v_{-1})= \sbmattrix{ \mathcal{O}_{L_{\infty}} }{ \pi_{L_{\infty}}\mathcal{O}_{L_{\infty}} }{ \pi_{L_{\infty}} \mathcal{O}_{L_{\infty}} }{ \mathcal{O}_{L_{\infty}} }{ \mathcal{O}_{L_{\infty}} }{ \pi_{L_{\infty}}\mathcal{O}_{L_{\infty}} }{ \pi_{L_{\infty}}^{-1} \mathcal{O}_{L_{\infty}} }{\mathcal{O}_{L_{\infty}} }{ \mathcal{O}_{L_{\infty}} }  \cap \sbmattrix{S}{S}{J^{-1}}{J}{S}{S}{J}{J}{S} \cap \mathrm{SL}_3(L_{\infty}).$$
Since $\mathcal{O}_{L_{\infty}} \cap S=F$, we have that $J \cap \pi_{L_{\infty}}^{-1}\mathcal{O}_{L_{\infty}}= \sqrt{t}F$ and $J^{-1}\cap \pi_{L_{\infty}}\mathcal{O}_{L_{\infty}}=(1/\sqrt{t})F$.
Moreover, since $\pi_{L_{\infty}}\mathcal{O}_{L_{\infty}} \cap S=\mathcal{O}_{L_{\infty}} \cap J=\lbrace 0 \rbrace$, the group $\mathrm{Stab}_{\hat{\Gamma}}(v_{-1})$ equals:
\begin{equation}\label{eq of psi}
\left\lbrace g=\sbmattrix{a_{11}}{0}{(1/\sqrt{t}) a_{13}}{0}{a_{22}}{0}{\sqrt{t}a_{31}}{0}{a_{33}} \Bigg{\vert} \,
\begin{array}{@{}lll@{}}
    g \Phi \bar{g}^{t}=\Phi, \\
    \det(g)=1, \\
     a_{ij}\in F.
  \end{array}
\right\rbrace, \text{ where } \Phi=\sbmattrix{0}{0}{1}{0}{1}{0}{1}{0}{0}.
\end{equation}
More explicitly, an element $g$ as above belongs to $\mathrm{Stab}_{\hat{\Gamma}}(v_{-1})$ if and only if $\det(g)=1$ and:
$$ \sbmattrix{0}{0}{a_{11} a_{33}-a_{13}a_{31} }{0}{a_{22}^2}{0}{a_{11}a_{33}-a_{13}a_{31}}{0}{0}=\sbmattrix{0}{0}{1}{0}{1}{0}{1}{0}{0}.$$
Thus $a_11a_{33}-a_{13}a_{31}=1$ and $a_{22} \in \lbrace \pm 1 \rbrace$.
Since $1=\det(g)=a_{22}(a_11a_{33}-a_{13}a_{31})$, we conclude that $a_{22}=1$, which proves that $\mathrm{Stab}_{\hat{\Gamma}}(v_{-1})=\hat{G}_{-1}$.
\end{proof}

As in Corollary \ref{lemma ABLL amal}, let $B_0$ be the (Borel) subgroup of upper triangular matrices in $G_0=\mathrm{SO}(q)(F)$, which can be written as:
\begin{equation}\label{eq B0}
B_0=\left\lbrace \U(x,-x^2/2) \tilde{a}(\lambda) \vert \, x\in F, \,\,\lambda\in F^*\right\rbrace.
\end{equation}
\begin{Lemma}\label{lemma action in v0}
The group $B_0$ acts on $\mathscr{V}^1(v_0)$ with exactly two orbits, namely $B_0 \cdot v_{1}$ and $B_0 \cdot v_{-1}$.
\end{Lemma}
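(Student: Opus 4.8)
The plan is to deduce the statement from the transitivity of the larger group $G_0=\mathrm{SO}(q)(F)$ on $\mathscr{V}^1(v_0)$ established in Lemma~\ref{lemma ABLL action}, together with the Bruhat decomposition of $\mathrm{PGL}_2(F)$. The point is that, once we know the $G_0$-stabilizer of one neighbour of $v_0$, the $B_0$-orbits on $\mathscr{V}^1(v_0)$ become a set of double cosets which the rank-one Bruhat decomposition forces to have exactly two elements.

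Concretely, I would proceed as follows. First, compute $\mathrm{Stab}_{G_0}(v_1)=G_0\cap G_1$ (using $\mathrm{Stab}_{\Gamma}(v_1)=G_1$ from Lemma~\ref{lemma ABLL action}). Since every element of $G_1$ is upper triangular, $G_1\subseteq B$, so $G_0\cap G_1\subseteq G_0\cap B=B_0$; and conversely $B_0\subseteq G_1$, because for $x\in F$ one has $x,-x^2/2\in F\subseteq F[\sqrt{t}]$ and $\nu_{\infty}(-x^2/2)\geq 0>-1/2$. Hence $\mathrm{Stab}_{G_0}(v_1)=B_0$. Consequently the orbit map $g\mapsto g\cdot v_1$ induces a $G_0$-equivariant bijection $G_0/B_0\xrightarrow{\ \sim\ }\mathscr{V}^1(v_0)$, so the $B_0$-orbits on $\mathscr{V}^1(v_0)$ biject with the double cosets $B_0\backslash G_0/B_0$. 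Now $B_0$, as described in Eq.~\eqref{eq B0}, is isomorphic to the affine group $F\rtimes F^{*}$ (with $F^{*}$ acting by multiplication) and is the full upper-triangular subgroup of the $F$-rank-one group $\mathrm{SO}(q)(F)$; under the exceptional isomorphism $\psi\colon\mathrm{PGL}_2(F)\xrightarrow{\ \sim\ }G_0$ of Lemma~\ref{lemma SO3 is PSL2} it therefore corresponds to a Borel subgroup. The Bruhat decomposition of $\mathrm{PGL}_2(F)$ relative to a Borel $P$ has the form $\mathrm{PGL}_2(F)=P\sqcup P\,w\,P$, so $B_0\backslash G_0/B_0$ has exactly two elements, and $B_0$ has exactly two orbits on $\mathscr{V}^1(v_0)$. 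Finally, I would identify the orbits: one of them is $B_0\cdot v_1$, which equals $\{v_1\}$ since $\mathrm{Stab}_{B_0}(v_1)=B_0$; and $v_{-1}$ is a neighbour of $v_0$ (a consecutive vertex of $\Aa_{\infty}$) distinct from $v_1$, so it must lie in the other orbit. Thus the two $B_0$-orbits on $\mathscr{V}^1(v_0)$ are $B_0\cdot v_1=\{v_1\}$ and $B_0\cdot v_{-1}=\mathscr{V}^1(v_0)\smallsetminus\{v_1\}$, as claimed.

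The only step that is not routine bookkeeping is the identification of $B_0$ with a Borel subgroup of $G_0\cong\mathrm{PGL}_2(F)$. This can be taken from the explicit conjugation computations in the proof of Lemma~\ref{lemma SO3 is PSL2}, which show that $\psi$ carries the upper-triangular Borel of $\mathrm{PGL}_2$ onto $B_0$; or it can be handled intrinsically by observing that the matrices $\U(x,-x^2/2)\tilde{a}(\lambda)$ fix the isotropic line $F\cdot(1,0,0)$ for $q(x,y,z)=2xz+y^2$ and act transitively on the complementary affine chart of $\mathbb{P}^1(F)\cong\mathscr{V}^1(v_0)$. Alternatively, the whole lemma could be proved in the computational style of the preceding stabilizer lemma, by tracking directly how $\U(x,-x^2/2)$ and $\tilde{a}(\lambda)$ move the vertices in the $1$-star of $v_0$; but the argument above is shorter and makes the two orbits transparent.
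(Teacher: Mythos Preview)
Your proof is correct and follows essentially the same approach as the paper: both use the transitivity of $G_0$ on $\mathscr{V}^1(v_0)$ from Lemma~\ref{lemma ABLL action}, identify $B_0$ with $\mathrm{Stab}_{G_0}(v_1)$, and then invoke the rank-one Bruhat decomposition to count the double cosets $B_0\backslash G_0/B_0$. The paper is slightly terser in asserting $B_0=\mathrm{Stab}_{G_0}(v_1)$ without your explicit verification, and it identifies the second orbit representative via $\mathrm{s}\cdot v_1=v_{-1}$ rather than by exclusion, but these are cosmetic differences.
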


\begin{proof}
On one hand, it follows from Lemma \ref{lemma ABLL action} that $G_0=\mathrm{SO}(q)(F)$ acts transitively on $\mathscr{V}^1(v_0)$.
On the other hand, note that $B_0$ equals $\mathrm{Stab}_{G_0}(v_1)$.
Then, the orbit set $B_0 \backslash \mathscr{V}^1(v_0)$ is in bijection with the double quotient $B_0 \backslash G_0/B_0$.
Moreover, it follows from the Bruhat decomposition on $G_0$ that this double quotient $B_0 \backslash G_0/B_0$ is in bijection with $\lbrace \mathrm{id}, \mathrm{s} \rbrace$.
Therefore, since $\mathrm{s} \cdot v_1=v_{-1}$, the result follows.
\end{proof}

\begin{Lemma}\label{lemma trans action on v_{-1}}
The group $\hat{G}_{-1}$ acts transitively on $\mathscr{V}^1(v_{-1})$.
\end{Lemma}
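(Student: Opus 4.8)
The plan is to obtain transitivity from strong approximation, in the spirit of the proof of Lemma~\ref{lemma fund domain for H}, after first recording that the ambient local group is already transitive on the link of $v_{-1}$. So I would begin by recalling that $\mathcal{G}_{K_\infty}(K_\infty)=\mathrm{SU}_3(K_\infty)$ acts strongly transitively on the Bruhat--Tits tree $\X_\infty$, so that $P:=\mathrm{Stab}_{\mathcal{G}_{K_\infty}(K_\infty)}(v_{-1})$ is an open parahoric subgroup which surjects onto its reductive quotient over the residue field $F$. Since $\X_\infty$ is a tree, the link of $v_{-1}$ is precisely $\mathscr{V}^1(v_{-1})$, and the reductive quotient acts transitively on it; hence $P$ acts transitively on $\mathscr{V}^1(v_{-1})$ (compare \cite[\S 7.4]{AbramenkoBrown} and \cite{BT1,BT2}).

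Next I would show that the image of $\hat\Gamma$ in $\mathcal{G}_{K_\infty}(K_\infty)$ is dense, and then descend. By strong approximation for the simply connected isotropic group $\mathrm{SU}_3$ over $K=F(t)$ (cf.\ \cite[Thm.~A]{PrasadSA}, applied at the two isotropic places $0$ and $\infty$; equivalently, $\tilde\Gamma$ is generated by the $F[\sqrt{t},1/\sqrt{t}]$-points of its root subgroups, which are dense in the corresponding root subgroups of $\mathcal{G}_{K_0}(K_0)$ and of $\mathcal{G}_{K_\infty}(K_\infty)$), the group $\tilde\Gamma=\mathrm{SU}_3(F[t,t^{-1}])$ is dense in $\mathcal{G}_{K_0}(K_0)\times\mathcal{G}_{K_\infty}(K_\infty)$. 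By the proof of Lemma~\ref{lemma decomposition of H} we have $\hat\Gamma=\tilde\Gamma\cap\bigl(P_0\times\mathcal{G}_{K_\infty}(K_\infty)\bigr)$, where $P_0:=\mathrm{Stab}_{\mathcal{G}_{K_0}(K_0)}(v_1^{*})$ is open and nonempty; projecting onto the factor at $\infty$, density of $\tilde\Gamma$ in the product forces the image of $\hat\Gamma$ in $\mathcal{G}_{K_\infty}(K_\infty)$ to be dense. To finish, fix $w,w'\in\mathscr{V}^1(v_{-1})$ and choose $g\in P$ with $g\cdot w=w'$. The set $\{h\in\mathcal{G}_{K_\infty}(K_\infty): h\cdot v_{-1}=v_{-1}\text{ and }h\cdot w=w'\}$ is the left coset $g\cdot\mathrm{Stab}_{\mathcal{G}_{K_\infty}(K_\infty)}(e')$ of the stabilizer of the edge $e'$ joining $v_{-1}$ and $w$, hence open and nonempty; by density it meets $\hat\Gamma$, and any element $\gamma$ of the intersection lies in $\mathrm{Stab}_{\hat\Gamma}(v_{-1})=\hat{G}_{-1}$ and satisfies $\gamma\cdot w=w'$. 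Thus $\hat{G}_{-1}$ acts transitively on $\mathscr{V}^1(v_{-1})$.

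The delicate point is the density of $\hat\Gamma$ in $\mathcal{G}_{K_\infty}(K_\infty)$: one has to invoke strong approximation with respect to a set of places containing both isotropic places $0$ and $\infty$, and check that intersecting $\tilde\Gamma$ with the open subgroup $P_0$ in the component at $0$ does not destroy density in the component at $\infty$; the remaining steps are formal. If one wishes to avoid this, one can argue directly inside $\hat{G}_{-1}\cong\mathrm{SL}_2(F)$: the subgroup $\{\V(b\sqrt{t}^{-1})\mid b\in F\}\subseteq\hat{G}_{-1}$ fixes both $v_{-1}$ and $v_0$ and acts on $\mathscr{V}^1(v_{-1})$ through the translations of $\mathbb{P}^1$ over the residue field, hence transitively on $\mathscr{V}^1(v_{-1})\smallsetminus\{v_0\}$, whereas $\mathrm{s}':=\tilde{a}(\sqrt{t}^{-1})\,\mathrm{s}\in\hat{G}_{-1}$, with $\mathrm{s}$ as in \eqref{eq s}, normalizes the maximal torus and acts on $\Aa_\infty$ as the reflection fixing $v_{-1}$, so it exchanges $v_0$ with the other neighbour $v_{-2}$ of $v_{-1}$ on $\Aa_\infty$; together, these two subgroups of $\hat{G}_{-1}$ act transitively on $\mathscr{V}^1(v_{-1})$.
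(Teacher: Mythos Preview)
Your primary argument via strong approximation contains a genuine error. The claim that $\tilde\Gamma=\mathrm{SU}_3(F[t,t^{-1}])$ is dense in $\mathcal G_{K_0}(K_0)\times\mathcal G_{K_\infty}(K_\infty)$ is false: the ring $F[t,t^{-1}]$ embeds \emph{discretely} in $K_0\times K_\infty$ (a Laurent polynomial with large $\nu_0$ and large $\nu_\infty$ must vanish), so $\tilde\Gamma$ is a lattice in that product, not a dense subgroup. Strong approximation with $S=\{0,\infty\}$ only yields density of $G(K)$ in the adeles \emph{away from} $S$; it says nothing about density at the places in $S$ themselves. Consequently $\hat\Gamma$, being commensurable with a lattice in $P_0\times\mathcal G_{K_\infty}(K_\infty)$ with $P_0$ compact, projects to a discrete subgroup of $\mathcal G_{K_\infty}(K_\infty)$, not a dense one, and the open-coset argument collapses. (The correct use of strong approximation in the paper, Lemma~\ref{lemma fund domain for H}, takes $S=\{\infty\}$ to get density of $\tilde\Gamma$ at the \emph{single} remaining place $0$.)

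Your alternative direct argument is essentially correct and close in spirit to the paper's proof, though it rests on an assertion you do not justify: that $\{\V(b/\sqrt t):b\in F\}$ acts transitively on $\mathscr V^1(v_{-1})\smallsetminus\{v_0\}$. Saying this group acts ``through the translations of $\mathbb P^1$'' presupposes that the link is $\mathbb P^1(F)$ with $\hat G_{-1}\cong\mathrm{SL}_2(F)$ acting in the standard way; this is true (it is the Moufang property, or equivalently the identification of the link with the flag variety of the reductive quotient of the parahoric), but it needs to be said or cited. The paper avoids this by pulling back the known two-orbit decomposition of $\mathscr V^1(v_1)$ under $G_1$ (Lemma~\ref{lemma ABLL action}) via conjugation by $\mathrm s$, computing explicitly that the relevant coset representatives $\mathrm u_{-2\texttt a}(y_0\sqrt t)$ already lie in $\hat G_{-1}$, and then using the same reflection $s_J$ (your $\mathrm s'$, up to sign) to merge the two orbits. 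Your version trades the lower unipotent for the upper one and is slightly slicker once the link structure is granted; the paper's version is more self-contained because it reduces everything to the cited result for $\Gamma$ at $v_1$.
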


\begin{proof}
It follows from Lemma \ref{lemma ABLL action} that $G_1$ acts on $\mathscr{V}^1(v_1)$ with exactly two orbits, namely $G_1 \cdot v_0$ and $G_1 \cdot v_2$.
Since the diagonal group $\lbrace \tilde{a}(\lambda) \vert \lambda \in F^{*} \rbrace$ acts trivially on $ v_0, v_1$ and $v_2$, we have that $U_1:=\lbrace \U(x,y) \vert (x,y) \in H(L,K)_{S}, \nu(y) \geq -1/2\rbrace$ acts on $\mathscr{V}^1(v_1)$ with exactly two orbits, namely $U_1 \cdot v_0$ and $U_1 \cdot v_2$.
Since $\mathrm{s}\cdot v_1=v_{-1}$, $\mathrm{s}\cdot v_0=v_{0}$ and $\mathrm{s}\cdot v_2=v_{-2}$, the group:
$$U_1^{\circ}:=\mathrm{s} U_1 \mathrm{s}^{-1}=\lbrace \mathrm{u}_{-\mathtt{a}}(x,y) \vert (x,y) \in H(L,K)_{S}, \nu(y) \geq -1/2\rbrace,$$
acts on $\mathscr{V}^1(v_{-1})$ with two orbits that are $U_1^{\circ} \cdot v_0$ and $U_1^{\circ} \cdot v_{-2}$.
Note that $y \in S$ satisfies $\nu(y) \geq -1/2$ exactly when $y=a_0+a_1\sqrt{t}$, for certain $a_0,a_1 \in F$. Thus:
$$U_1^{\circ}:=\lbrace \mathrm{u}_{-\mathtt{a}}(x_0,a_0+a_1\sqrt{t}) \vert x_0, a_0, a_1 \in F, x_0^2+2a_0=0\rbrace.$$
Therefore $U_1^{\circ}/ (U_1^{\circ} \cap \mathrm{s}G_0\mathrm{s}^{-1}) = U_1^{\circ}/ (U_1^{\circ} \cap G_0) \cong \lbrace \mathrm{u}_{-\mathtt{a}}(0,y_0\sqrt{t}) \vert  y_0 \in F\rbrace$.
In particular, the group $U_1^{\circ}/ (U_1^{\circ} \cap \mathrm{s}G_0\mathrm{s}^{-1})$ is covered by $\hat{G}_{-1}$, whence $\hat{G}_{-1} \cdot v_0 \supseteq U_1^{\circ} \cdot v_0$.
Analogously, since $U_1^{\circ}/ (U_1^{\circ} \cap \mathrm{s}G_2\mathrm{s}^{-1}) \cong \lbrace \mathrm{id}\rbrace$, we have $\hat{G}_{-1} \cdot v_{-2} \supseteq U_1^{\circ} \cdot v_{-2}$.
Therefore $\hat{G}_{-1}$ acts on $\mathscr{V}^1(v_{-1})$ with at most two orbits, namely $\hat{G}_{-1} \cdot v_0$ and $\hat{G}_{-1} \cdot v_{-2}$. Finally, since the matrix:
$$s_J:=\sbmattrix{0}{0}{\sqrt{t}^{-1}}{0}{1}{0}{\sqrt{t}}{0}{0} \in \hat{G}_{-1},$$
exchanges the vertices $v_0$ with $v_{-2}$, the result follows.
\end{proof}

In the sequel, we denote by $\mathscr{R}_{\infty,-1}$ the ray of $\Aa_{\infty}$ whose vertex set is exactly $\lbrace v_n\rbrace_{n=-1}^{\infty}$.

\begin{Proposition}\label{prop fund dom for H1}
The ray $\mathscr{R}_{\infty,-1}$ is a fundamental domain for the action of $\hat{\Gamma}$ on $\X_{\infty}.$
\end{Proposition}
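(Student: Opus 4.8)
The plan is to verify that $\mathscr{R}_{\infty,-1}$ meets the two conditions characterising a strict fundamental domain for an action without inversion on a tree (Bass--Serre theory, cf. \cite{SerreTrees}): \textbf{(i)} the $\hat\Gamma$-translates of $\mathscr{R}_{\infty,-1}$ cover $\X_{\infty}$, and \textbf{(ii)} no two distinct vertices, and no two distinct edges, of $\mathscr{R}_{\infty,-1}$ lie in the same $\hat\Gamma$-orbit; these together say that $\mathscr{R}_{\infty,-1}\to\hat\Gamma\backslash\X_{\infty}$ is an isomorphism of graphs.

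For \textbf{(i)} I would induct on $d(w,\mathscr{R}_{\infty,-1})$ for a vertex $w$. If this distance is positive, let $v_n$ ($n\geq-1$) be the nearest vertex of $\mathscr{R}_{\infty,-1}$ to $w$ and let $w'\in\mathscr{V}^1(v_n)\setminus\mathscr{R}_{\infty,-1}$ be the next vertex on the geodesic $[v_n,w]$, so $d(w,w')=d(w,\mathscr{R}_{\infty,-1})-1$. The link computations of Lemmas \ref{lemma ABLL action}, \ref{lemma action in v0} and \ref{lemma trans action on v_{-1}} then give a subgroup of $\mathrm{Stab}_{\hat\Gamma}(v_n)$ pairing $w'$ with a vertex of $\mathscr{R}_{\infty,-1}$: namely $G_n\subseteq\hat{G}_{n}$, with orbits $G_n\cdot v_{n-1}$ and $G_n\cdot v_{n+1}$, when $n\geq1$; $B_0\subseteq\hat{G}_{0}$, with orbits $B_0\cdot v_{-1}$ and $B_0\cdot v_1$, when $n=0$; and $\hat{G}_{-1}$, acting transitively on $\mathscr{V}^1(v_{-1})$, when $n=-1$ --- here one uses that the matrix groups $G_n$ and $B_0$ are indeed contained in $\hat\Gamma$, hence in $\mathrm{Stab}_{\hat\Gamma}(v_n)$. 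Picking $g$ in the relevant subgroup with $g\cdot w'\in\mathscr{R}_{\infty,-1}$ gives $d(g\cdot w,\mathscr{R}_{\infty,-1})\leq d(g\cdot w,g\cdot w')=d(w,\mathscr{R}_{\infty,-1})-1$, since $g$ fixes $v_n$; the induction covers every vertex, and the same folding applied to the far endpoint of an edge covers every edge.

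For \textbf{(ii)} the key idea I would exploit is that, from the explicit description of the stabilizers, $\hat{G}_{0}\subseteq\hat{G}_{1}\subseteq\hat{G}_{2}\subseteq\cdots$ (the constraint $\nu_{\infty}(y)\geq-n/2$ weakens with $n$); since $\hat{G}_{m}=\mathrm{Stab}_{\hat\Gamma}(v_m)$, this means $\hat{G}_{n}$ fixes $v_{n+1}$. Moreover, as $F$ is infinite the link $\mathscr{V}^1(v_n)$ is infinite and, by Lemmas \ref{lemma ABLL action} and \ref{lemma action in v0}, splits as $\{v_{n+1}\}$ together with one infinite $\hat{G}_{n}$-orbit (the one through $v_{n-1}$); hence $v_{n+1}$ is the \emph{unique} neighbour of $v_n$ fixed by $\hat{G}_{n}$. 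Now if $g\in\hat\Gamma$ satisfies $g\cdot v_i=v_j$ with $0\leq i<j$, then $\hat{G}_{j}=g\hat{G}_{i}g^{-1}$ fixes $g\cdot v_{i+1}\in\mathscr{V}^1(v_j)$, forcing $g\cdot v_{i+1}=v_{j+1}$; so $g\cdot v_{i-1}\in\mathscr{V}^1(v_j)$ is distinct from $v_{j+1}$ (otherwise $v_{i-1}=v_{i+1}$) and thus lies in $\hat{G}_{j}\cdot v_{j-1}$. Correcting $g$ by a suitable $h\in\hat{G}_{j}$ yields an element sending $v_{i-1}$ to $v_{j-1}$ and $v_i$ to $v_j$; iterating $i+1$ times produces $v_{-1}\sim_{\hat\Gamma}v_{j-i-1}$ with $j-i-1\geq0$. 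Since $\mathrm{Stab}_{\hat\Gamma}(v_{-1})\cong\mathrm{SL}_2(F)$ is non-solvable ($\mathrm{char}(F)=0$) while $\hat{G}_{j-i-1}$, a group of upper triangular matrices, is solvable, and $\hat\Gamma$-conjugate stabilizers are isomorphic, this is a contradiction; the same argument excludes $v_{-1}\sim_{\hat\Gamma}v_m$ for $m\geq0$, and the edge case of (ii) follows from the vertex case via adjacency. Hence the $v_n$ ($n\geq-1$) are pairwise $\hat\Gamma$-inequivalent.

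The part I expect to require the most care is \textbf{(ii)}: the obvious route --- trying to tell the stabilizers $\hat{G}_{n}$ apart up to abstract isomorphism --- is awkward over an infinite field, so the argument instead rests on the rigidity above, whose crux is the elementary remark that the ``upper triangular'' stabilizers $\hat{G}_{n}$ ($n\geq0$) fix $v_{n+1}$, whereas $\hat{G}_{-1}\cong\mathrm{SL}_2(F)$ fixes no neighbour of $v_{-1}$ at all.
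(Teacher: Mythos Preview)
Your argument is correct, and part \textbf{(i)} matches the paper's almost verbatim: both use the link computations of Lemmas \ref{lemma ABLL action}, \ref{lemma action in v0}, \ref{lemma trans action on v_{-1}} together with the containments $G_n\subseteq\hat{G}_n$ and $B_0\subseteq\hat{G}_0$ to fold any vertex into $\mathscr{R}_{\infty,-1}$.

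Where you diverge from the paper is in \textbf{(ii)}. The paper separates $v_{-1}$ from the $v_n$ ($n\geq 0$) by the same solvable/non-solvable contrast you use, but for $v_n$ versus $v_m$ with $n,m\geq 0$ it argues that a conjugacy $\hat{G}_n\cong g\hat{G}_m g^{-1}$ forces the unipotent parts $\hat{U}_n$, $\hat{U}_m$ to be conjugate by an element of the Borel, then compares the $F$-dimensions of their images under $\pi(\U(x,y))=x$ (namely $\lfloor n/2\rfloor$ and $\lfloor m/2\rfloor$), invoking an argument from \cite[Lemma 5.5]{ArenasBravoLoiselLucchini}; finally vertex type rules out $n=m\pm1$. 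Your route is more self-contained and more structural: from $\hat{G}_n\subseteq\hat{G}_{n+1}$ you extract that $v_{n+1}$ is the \emph{unique} neighbour of $v_n$ fixed by $\hat{G}_n$, and then propagate any hypothetical equivalence $v_i\sim v_j$ downwards until it becomes $v_{-1}\sim v_{\geq 0}$, which the solvability argument kills. This avoids both the external reference and the dimension count, and incidentally makes the vertex-type remark unnecessary. The paper's approach, on the other hand, gives slightly more: it pins down an explicit numerical invariant $\lfloor n/2\rfloor$ distinguishing the stabilizers, which is informative in its own right. One small remark: your phrase ``as $F$ is infinite'' is not really needed---the uniqueness of the fixed neighbour follows already from the fact that the link has more than two vertices, which holds for any residue field.
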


\begin{proof}
As in \S \ref{subsection quot by arith sub}, let $G_n$ be the $\Gamma$-stabilizer of $v_n$, for $n\geq 0$.
Since, for each $n>0$, the group $\hat{G}_{n}$ contains $G_n$, it follows from Lemma \ref{lemma ABLL action} that $\hat{G}_{n}$ acts on $\mathscr{V}^1(v_n)$ with at most two orbits, namely $\hat{G}_{n}\cdot v_{n-1}$ and $\hat{G}_{n}\cdot v_{n+1}$.
Moreover, since $B_0 \subseteq \hat{G}_{0}$, it follows from Lemma \ref{lemma action in v0} that $\hat{G}_{0}$ acts on $\mathscr{V}^1(v_0)$ with at most two orbits, which are $\hat{G}_{0} \cdot v_{1}$ and $\hat{G}_{0} \cdot v_{-1}$.
Thus, for each vertex $v_n\in \mathscr{R}_{\infty,-1}$, $n\neq 0$, its $\hat{\Gamma}$-stabilizer $\hat{G}_n=\hat{\Gamma}_{v_n}$ acts on $\mathscr{V}^1(v_n)$ with at most two orbits, namely $\hat{G}_{n}\cdot v_{n-1}$ and $\hat{G}_{n}\cdot v_{n+1}$.
Finally, since $\hat{G}_{-1}$ acts transitively on $\mathscr{V}^1(v_{-1})$ according to Lemma \ref{lemma trans action on v_{-1}}, each vertex of $\X_{\infty}$ is in the same $\hat{\Gamma}$-orbit of some vertex in $\mathscr{R}_{\infty,-1}$.
Since the action of $\hat{\Gamma}$ on $\X_{\infty}$ is simplicial, an analogous statement holds for edges.
We conclude that $\mathscr{R}_{\infty,-1}$ contains a fundamental domain for the action of $\hat{\Gamma}$ on $\X_{\infty}.$

Now, we have to prove that any two different vertices in $\mathscr{R}_{\infty,-1}$ fail to belong to the same $\hat{\Gamma}$-orbit.
Indeed, note that, when $v=g \cdot w$, with $g\in \hat{\Gamma}$, then $\mathrm{Stab}_{\hat{\Gamma}}(v)= g \cdot \mathrm{Stab}_{\hat{\Gamma}}(w) \cdot g^{-1}$.
Since $\mathrm{Stab}_{\hat{\Gamma}}(v_{-1})\cong \mathrm{SL}_2(F)$ is non-isomorphic to any $\hat{G}_{n}$, for $n\geq 0$, the vertex $v_{-1}$ is non $\hat{\Gamma}$-equivalent to any $v_n$, with $n\geq 0$.
Assume that $v_n$ is $\hat{\Gamma}$-equivalent to $v_m$, with $n,m\geq 0$.
Then, the groups $\hat{G}_{n}$ and $\hat{G}_{m}$ are $\hat{\Gamma}$-conjugates. 
Following the same argument as in \cite[Lemma 5.5]{ArenasBravoLoiselLucchini} we get that
\begin{align*}
\hat{U}_{n}&:=\left \lbrace\U(x,y) \vert \, (x,y) \in H(L,K)_{S\times J^{-1}}, \,\, \nu_{\infty}(y)\geq -n/2 \right \rbrace, \\
\hat{U}_{m}&:=\left \lbrace\U(x,y) \vert \, (x,y) \in H(L,K)_{S\times J^{-1}}, \,\, \nu_{\infty}(y)\geq -m/2 \right \rbrace,
\end{align*}
are conjugates by a matrix of the form $h=\tilde{a}(\lambda) \U(z,w)$, with $\lambda\in L^*$ and $(z,w)\in H(L,K)$.
Let $\pi: \mathcal{U}_a(K) \to L$ be the group homomorphism defined by $\pi(\U(x,y))=x$.
Then 
$\pi(\hat{U}_{n})=\lbrace s\in S: \nu_{\infty}(s) \geq -n/4 \rbrace.$
Since $h\mathrm{u}_{a}(x,y) h^{-1}= \mathrm{u}_{a}\big((\bar{\lambda}^2/\lambda) x, \lambda\bar{\lambda}(y+\overline{x}z-\overline{z}x)\big)$, we have that $\pi(\hat{U}_{n})= \kappa \cdot \pi(\hat{U}_{m})$, where $\kappa=\overline{\lambda}^2/\lambda$.
In particular, $\pi(\hat{U}_{n})$ and $\pi(\hat{U}_{m})$ are $F$-vector spaces with the same dimension.
Since $\mathrm{dim}_F\big(\pi(\hat{U}_{n})\big)=\lfloor n/2 \rfloor$ and $\mathrm{dim}_F\big(\pi(\hat{U}_{m})\big)=\lfloor m/2 \rfloor$, we get that $n=m\pm 1$ or $n=m$.
But $n=m\pm 1$ is impossible since the action of $\hat{\Gamma}$ on $\X_{\infty}$ preserves the vertex type.
Thus $n=m$ as desired.
\end{proof}

It follows from \cite[Ch. I, \S 4, Th. 10]{SerreTrees} that $\hat{\Gamma}$ is isomorphic to the sum of $\hat{G}_{-1}$ with the union $\bigcup_{n=0}^{\infty} \hat{G}_{n}$ amalgamated along their common intersection.
Moreover, the union of the group $\hat{G}_n$, for $n\geq 0$, equals:
$$\hat{B}:=\sbmattrix{F^{*}}{S}{J^{-1}}{0}{F^*}{S}{0}{0}{F^{*}}\cap \tilde{\Gamma}=\big\lbrace \U(x,y) \tilde{a}(\lambda) \vert \, (x,y) \in H(L,K)_{S \times J^{-1}}, \,\, \lambda\in F^* \big\rbrace.$$
Therefore, since the intersection of $\hat{B}$ with $\hat{G}_{-1}$ is isomorphic the subgroup $\mathrm{B}(F)$ of upper triangular matrices in $\mathrm{SL}_2(F)$,
next result follows:

\begin{Theorem}\label{teo dec of H1 as an amm product}
The group $\hat{\Gamma}$ is isomorphic to the free product of $\hat{G}_{-1} \cong \mathrm{SL}_2(F)$ with $\hat{B}$, amalgamated by $ \hat{G}_{-1} \cap \hat{B} \cong \mathrm{B}(F)$. \qed
\end{Theorem}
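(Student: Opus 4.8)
The plan is to feed the fundamental domain of Proposition~\ref{prop fund dom for H1} into the structure theorem for a group acting on a tree, \cite[Ch.~I, \S 4, Th.~10]{SerreTrees}. Since the quotient $\mathscr{R}_{\infty,-1}$ is a ray, hence a tree, and since $\hat{\Gamma}$ acts on $\X_{\infty}$ without inversions (the action preserves vertex types, as noted in the proof of Proposition~\ref{prop fund dom for H1}), that theorem presents $\hat{\Gamma}$ as the fundamental group of the graph of groups over $\mathscr{R}_{\infty,-1}$ whose vertex groups are the stabilizers $\mathrm{Stab}_{\hat{\Gamma}}(v_n)$ and whose edge groups are the intersections $\mathrm{Stab}_{\hat{\Gamma}}(v_n)\cap\mathrm{Stab}_{\hat{\Gamma}}(v_{n+1})$. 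As the underlying graph is a tree, this fundamental group is just the iterated amalgamated product (direct limit) of the vertex groups along the edge groups, and by the stabilizer computations carried out above the vertex groups are $\hat{G}_n$ for $n\geq 0$ and $\hat{G}_{-1}\cong\mathrm{SL}_2(F)$.

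First I would collapse the nonnegative part of the ray. The defining inequality $\nu_{\infty}(y)\geq -n/2$ weakens as $n$ grows, so $\hat{G}_n\subseteq\hat{G}_{n+1}$ for all $n\geq 0$, and hence the edge group $\hat{G}_n\cap\hat{G}_{n+1}$ equals $\hat{G}_n$. In a graph of groups an edge whose group maps isomorphically onto one of its two vertex groups may be collapsed without changing the fundamental group; collapsing $[v_0,v_1]$, then $[v_1,v_2]$, and so on, therefore replaces the subray on $\{v_0,v_1,\dots\}$ by a single vertex carrying the ascending union $\bigcup_{n\geq 0}\hat{G}_n$. That union is exactly $\hat{B}$: any $\U(x,y)\tilde{a}(\lambda)$ with $(x,y)\in H(L,K)_{S\times J^{-1}}$ and $\lambda\in F^{*}$ has $\nu_{\infty}(y)$ finite, hence lies in $\hat{G}_n$ as soon as $n\geq -2\nu_{\infty}(y)$, while conversely each $\hat{G}_n$ is visibly contained in $\hat{B}$.

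After this collapse the graph of groups is a single edge joining $v_{-1}$, with group $\hat{G}_{-1}\cong\mathrm{SL}_2(F)$, to the new vertex, with group $\hat{B}$; thus $\hat{\Gamma}\cong\hat{G}_{-1}\ast_{\,\hat{G}_{-1}\cap\hat{G}_0}\hat{B}$, and it only remains to identify the amalgamated subgroup. Since $\hat{G}_0\subseteq\hat{B}$ we have $\hat{G}_{-1}\cap\hat{G}_0\subseteq\hat{G}_{-1}\cap\hat{B}$. Conversely a matrix $\sbmattrix{a}{0}{\sqrt{t}^{-1}b}{0}{1}{0}{\sqrt{t}c}{0}{d}\in\hat{G}_{-1}$ is upper triangular, i.e.\ belongs to $\hat{B}$, exactly when $c=0$; writing such an element as $\U(0,ab/\sqrt{t})\,\tilde{a}(a)$ exhibits it already inside $\hat{G}_0$, whence $\hat{G}_{-1}\cap\hat{G}_0=\hat{G}_{-1}\cap\hat{B}$. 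Under the isomorphism $\hat{G}_{-1}\cong\mathrm{SL}_2(F)$ the condition $c=0$ describes precisely the Borel subgroup $\mathrm{B}(F)$ of upper triangular matrices, which gives the stated decomposition.

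The argument is short and every step is elementary; the point that deserves the most care is the passage from the infinite iterated amalgam to the single amalgam $\hat{G}_{-1}\ast_{\mathrm{B}(F)}\hat{B}$ — that is, checking both that the direct limit along the ray is compatible with the graph-of-groups fundamental group and that $\bigcup_{n\geq 0}\hat{G}_n$ genuinely exhausts $\hat{B}$ (equivalently, that $\hat{B}$ is the full $\hat{\Gamma}$-stabilizer of the end of $\mathscr{R}_{\infty,-1}$ opposite $v_{-1}$). The remaining matrix identifications are routine.
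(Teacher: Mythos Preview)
Your proof is correct and follows essentially the same route as the paper: apply \cite[Ch.~I, \S 4, Th.~10]{SerreTrees} to the fundamental domain from Proposition~\ref{prop fund dom for H1}, identify the ascending union $\bigcup_{n\geq 0}\hat{G}_n$ with $\hat{B}$, and check that $\hat{G}_{-1}\cap\hat{B}\cong\mathrm{B}(F)$. Your write-up is in fact more detailed than the paper's, which states these identifications without verification; your explicit collapse argument and the matrix check for $\hat{G}_{-1}\cap\hat{G}_0=\hat{G}_{-1}\cap\hat{B}$ fill in exactly the steps the paper leaves to the reader.
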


Now we turn to the proof of Th. \ref{main teo hom h1}.
In particular, we now assume $\mathrm{char}(F)=0$.

\begin{proof}[Proof of Th. \ref{main teo hom h1}]
The amalgamated product described in Th. \ref{teo dec of H1 as an amm product} yields a Mayer-Vietoris sequence of the form:
\begin{equation}\label{eq exact sequence for hat Gamma}
\cdots \to H_k\big(\mathrm{B}(F), \mathbb{Z}\big) \to H_k\big(\mathrm{SL}_2(F), \mathbb{Z}\big)\oplus H_k\big(\hat{B},\mathbb{Z}\big) \to H_k\big(\hat{\Gamma}, \mathbb{Z}\big) \to \cdots
\end{equation}
It directly follows from \cite[Prop. 3.2]{Knudson1} that $H^*\big(\mathrm{B}(F), \mathbb{Z}\big) \cong H^*\big(F^*, \mathbb{Z}\big)$.
Moreover, it follows from Proposition \ref{prop inv hom for Bst} applied to $S=F[\sqrt{t}]$ and $T=(1/\sqrt{t})F[\sqrt{t}]$, that $H_*\big(\hat{B}, \mathbb{Z}\big) \cong H_*\big( F^{*}, \mathbb{Z}\big)$.
Hence the exact sequence in \eqref{eq exact sequence for hat Gamma} equals:
\begin{equation}\label{eq exact sequence for hat Gamma 2}
\cdots \to H_k\big(F^*, \mathbb{Z}\big) \to H_k\big(\mathrm{SL}_2(F), \mathbb{Z}\big)\oplus H_k\big(F^*,\mathbb{Z}\big) \to H_k\big(\hat{\Gamma}, \mathbb{Z}\big) \to \cdots
\end{equation}
Therefore, Th. \ref{main teo hom h1} follows as the long exact sequence above \eqref{eq exact sequence for hat Gamma 2} breaks up into short exact sequences of the form $0\to H_k\big(F^*, \mathbb{Z}\big) \to H_k\big(\mathrm{SL}_2(F), \mathbb{Z}\big)\oplus H_k\big(F^*,\mathbb{Z}\big) \to H_k\big(\hat{\Gamma}, \mathbb{Z}\big) \to 0$.
\end{proof}

\section{The homology of the Hecke congruence subgroup \texorpdfstring{$\Gamma_0$}{G0}}\label{section homology of Hecke}

This section is devoted to describing the Hecke congruence subgroup $\Gamma_0=\Gamma \cap \hat{\Gamma}$ as an amalgamated product of simpler subgroups, as well as to understand its homology groups with integer coefficients.
To do so, in \S \ref{subsection fundamental domains} we study a fundamental domain $D$ for the action of $\Gamma_0$  on the Bruhat-Tits tree $\X_{\infty}$.
The method presented here focuses on the description of a certain covering $D'$ of $D$ defined by the action of a principal congruence subgroup on the same tree $\X_{\infty}$.
This method holds even when $\mathrm{char}(F)\neq 0$.
So, in this section we do not assume that $\mathrm{char}(F)=0$, unless we clearly indicate it.

\subsection{Fundamental domain for some congruence subgroups of \texorpdfstring{$\Gamma$}{G}}\label{subsection fundamental domains}

Let $\mathrm{ev}_0: \mathrm{SL}_3(F[\sqrt{t}]) \to \mathrm{SL}_3(F)$ be the group homomorphism induced by the evaluation of $t$ at $0$.
We denote by $\Gamma(t)$ the principal congruence subgroup of $\Gamma=\mathrm{SU}_3(F[t])$ defined as $\Gamma(t):= \ker(\mathrm{ev}_0) \cap \Gamma$.

\begin{Lemma}\label{lemma H0/K0}
One has $\Gamma/\Gamma(t) \cong \mathrm{SO}(q)(F)$.
\end{Lemma}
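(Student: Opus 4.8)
The plan is to show that the evaluation map $\mathrm{ev}_0$ restricts to a surjection $\Gamma \to \mathrm{SO}(q)(F)$ with kernel exactly $\Gamma(t)$, so that $\Gamma/\Gamma(t) \cong \mathrm{SO}(q)(F)$ follows from the first isomorphism theorem. First I would check that $\mathrm{ev}_0(\Gamma) \subseteq \mathrm{SO}(q)(F)$. An element $g \in \Gamma = \mathrm{SU}_3(F[t])$ is a matrix with entries in $S = F[\sqrt{t}]$ satisfying $g\,\Phi\,\bar g^{\,t} = \Phi$ and $\det(g) = 1$, where $\Phi$ is the Gram matrix of $h$ and $\overline{(\cdot)}$ is the nontrivial element of $\mathrm{Gal}(L/K)$, which sends $\sqrt t \mapsto -\sqrt t$. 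Applying $\mathrm{ev}_0$ (reduction of the $\sqrt t$-coordinate, i.e. setting $\sqrt t = 0$), the Galois action becomes the identity on the residue field $F$, so the hermitian relation degenerates to the orthogonal relation $\mathrm{ev}_0(g)\,\Phi\,\mathrm{ev}_0(g)^{t} = \Phi$ with $\det = 1$; hence $\mathrm{ev}_0(g) \in \mathrm{SO}(q)(F)$. This uses $\mathrm{Tr}_{L/K}(\sqrt t) = 0$, which makes the off-diagonal correction terms in the relation vanish under reduction.

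Next I would show surjectivity of $\mathrm{ev}_0 \colon \Gamma \to \mathrm{SO}(q)(F)$. The cleanest route is to use the subgroup $G_0 = \Gamma \cap \mathrm{SL}_3(F) = \mathrm{SO}(q)(F)$ already introduced in \S\ref{subsection quot by arith sub}: since $G_0$ consists of constant matrices, $\mathrm{ev}_0$ restricts to the identity on $G_0$, so $\mathrm{ev}_0(\Gamma) \supseteq \mathrm{ev}_0(G_0) = \mathrm{SO}(q)(F)$. Combined with the previous paragraph this gives $\mathrm{ev}_0(\Gamma) = \mathrm{SO}(q)(F)$. Finally, by definition $\Gamma(t) = \ker(\mathrm{ev}_0) \cap \Gamma = \ker\big(\mathrm{ev}_0|_\Gamma\big)$, so the first isomorphism theorem yields $\Gamma/\Gamma(t) \cong \mathrm{SO}(q)(F)$, as claimed.

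I expect the main (though still modest) obstacle to be the verification in the first paragraph that the reduced matrix genuinely lands in $\mathrm{SO}(q)(F)$ rather than merely in $\mathrm{O}(q)(F)$ or in some group attached to a form over the residue ring; one must be slightly careful that $\mathrm{ev}_0$ is well defined as a ring homomorphism $S = F[\sqrt t] \to F$ (it is, since $\sqrt t$ is a free variable over $F$) and that it is compatible with the Galois conjugation in the sense described, i.e. $\mathrm{ev}_0(\bar x) = \mathrm{ev}_0(x)$ for $x \in S$. Once this compatibility is recorded, the degeneration of the hermitian form to the quadratic form $q(x,y,z) = 2xz + y^2$ over $F$ is immediate, and the rest is formal. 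An alternative to invoking $G_0$ for surjectivity would be to lift each element of $\mathrm{SO}(q)(F)$ directly through the isomorphism $\mathrm{PGL}_2(F) \xrightarrow{\sim} \mathrm{SO}(q)(F)$ of Lemma~\ref{lemma SO3 is PSL2} together with the embedding $\iota$, but using $G_0$ is shorter and avoids appealing to $\iota$.
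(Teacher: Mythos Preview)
Your proposal is correct and follows essentially the same approach as the paper: both argue that applying $\mathrm{ev}_0$ to the hermitian relation $g\Phi\bar g^{\,t}=\Phi$ yields the orthogonal relation $\mathrm{ev}_0(g)\,\Phi\,\mathrm{ev}_0(g)^{t}=\Phi$, so $\mathrm{ev}_0(\Gamma)\subseteq\mathrm{SO}(q)(F)$, and then conclude via the first isomorphism theorem. Your treatment is in fact slightly more careful than the paper's, since you explicitly verify surjectivity using the constant subgroup $G_0=\mathrm{SO}(q)(F)\subset\Gamma$, whereas the paper leaves this step implicit.
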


\begin{proof}
Let $g \in \mathrm{SL}_3(F[\sqrt{t}])$.
Note that $g \in \Gamma$ exactly when $g \Phi g^{*}=\Phi$, with $\Phi$ as in Eq. \eqref{eq of psi} and $g^{*}=\overline{g}^t$ the conjugate transpose of $g$.
In particular $\mathrm{ev}_0(g) \Phi \mathrm{ev}_0(g)^{*}=\Phi$, where $\mathrm{ev}_0(g)^{*}=\mathrm{ev}_0(g)^{T}$.
Then, the image of $\Gamma \to \mathrm{SL}_3(F)$ is the set of matrices in $\mathrm{SL}_3(F)$ preserving $q$.
Thus $\mathrm{Im}\big( \Gamma \to \mathrm{SL}_3(F) \big) \cong \mathrm{SO}(q)(F)$, whence the result follows.
\end{proof}

Note that the quotient $\Gamma_0/\Gamma(t)$ is isomorphic to the image of $\Gamma_0=\Gamma_0(t)$ in $\Gamma/\Gamma(t)$.
This group is $B_0$ as defined in Eq. \eqref{eq B0}.

\begin{Lemma}\label{lemma H01/K0}
One has $\Gamma_0/\Gamma(t) \cong B_0=\left\lbrace \U(x,-x^2/2)\tilde{a}(\lambda) \vert \,  x \in F, \,\, \lambda\in F^{*} \right\rbrace$. \qed
\end{Lemma}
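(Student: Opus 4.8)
The plan is to run exactly the argument used for Lemma \ref{lemma H0/K0}, but with the domain of $\mathrm{ev}_0$ restricted to $\Gamma_0$. Consider the evaluation homomorphism $\mathrm{ev}_0 \colon \mathrm{SL}_3(F[\sqrt{t}]) \to \mathrm{SL}_3(F)$ and its restriction $\mathrm{ev}_0|_{\Gamma_0}$. First I would check that the kernel of this restriction is exactly $\Gamma(t)$, i.e. that $\Gamma(t) \subseteq \Gamma_0$. This is immediate: any $g \in \Gamma(t)$ has all entries of $g - \mathrm{id}$ in $\sqrt{t}\,F[\sqrt{t}]$, so in particular its $(2,1)$-, $(3,1)$- and $(3,2)$-entries lie in $\sqrt{t}\,F[\sqrt{t}]$ and its $(1,3)$-entry lies in $\sqrt{t}\,F[\sqrt{t}] \subseteq \sqrt{t}^{-1}F[\sqrt{t}]$, which are precisely the congruence conditions defining $\hat{\Gamma}$; hence $g \in \Gamma \cap \hat{\Gamma} = \Gamma_0$. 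By the first isomorphism theorem this gives $\Gamma_0/\Gamma(t) \cong \mathrm{ev}_0(\Gamma_0)$, and it remains to identify this image with $B_0$.

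For the inclusion $\mathrm{ev}_0(\Gamma_0) \subseteq B_0$: every element of $\Gamma_0$ has its three strictly-lower-triangular entries in $\sqrt{t}\,F[\sqrt{t}]$, hence maps under $\mathrm{ev}_0$ to an upper triangular matrix; combining this with $\mathrm{ev}_0(\Gamma_0) \subseteq \mathrm{ev}_0(\Gamma) = \mathrm{SO}(q)(F)$ from Lemma \ref{lemma H0/K0} yields $\mathrm{ev}_0(\Gamma_0) \subseteq \mathrm{SO}(q)(F) \cap \{\text{upper triangular}\} = B_0$. For the reverse inclusion I would observe that, in the parametrization of Eq. \eqref{eq B0}, the elements of $B_0$ are upper triangular matrices with all entries in $F$; such a matrix trivially satisfies the entry conditions defining $\hat{\Gamma}$ (here one uses $F \subseteq F[\sqrt{t}] \subseteq \sqrt{t}^{-1}F[\sqrt{t}]$ and $0 \in \sqrt{t}\,F[\sqrt{t}]$), while $B_0 \subseteq G_0 \subseteq \Gamma$; hence $B_0 \subseteq \Gamma \cap \hat{\Gamma} = \Gamma_0$. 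Since $\mathrm{ev}_0$ acts as the identity on matrices with entries in $F$, we get $B_0 = \mathrm{ev}_0(B_0) \subseteq \mathrm{ev}_0(\Gamma_0)$, completing the identification and hence the proof.

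There is no genuine obstacle here: the whole argument is a matter of reading off the congruence conditions defining $\hat{\Gamma}$ and $\Gamma$ and invoking Lemma \ref{lemma H0/K0}. The only step one must not skip is the verification $\Gamma(t) \subseteq \Gamma_0$, which is what makes the quotient $\Gamma_0/\Gamma(t)$ well behaved and lets one view it as the image of $\Gamma_0$ inside $\Gamma/\Gamma(t) \cong \mathrm{SO}(q)(F)$, exactly as in the remark preceding the statement.
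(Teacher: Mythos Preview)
Your proposal is correct and follows exactly the approach indicated in the paper: the paper states the lemma with a \qed and only the preceding remark as justification, namely that $\Gamma_0/\Gamma(t)$ is the image of $\Gamma_0$ in $\Gamma/\Gamma(t)\cong\mathrm{SO}(q)(F)$, which is $B_0$. Your write-up simply fills in the details of that remark (the inclusion $\Gamma(t)\subseteq\Gamma_0$ and the two inclusions identifying $\mathrm{ev}_0(\Gamma_0)$ with $B_0$), all of which are routine and correctly argued.
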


Next result is a technical tool in order to compute the desired fundamental domains.

\begin{Lemma}\label{lemma fundamental region for normal sub}
Let $H$ be a normal subgroup of $G$ such that $1 \to H \to G \xrightarrow{ \mathrm{\pi}} G/H \to 1$ splits.
Let $G'$ be a subgroup of $G$ which is isomorphically mapped to $G/H$ via $\pi$.
Assume that $G$ acts on a tree $X$ via simplicial maps, and let $\mathscr{Y}$ be a fundamental domain for this action. 
If $\mathscr{Z}:=\bigcup_{s \in G'} s \cdot \mathscr{Y}$ is connected, then it is a fundamental domain for the action of $H$ on $X$.
\end{Lemma}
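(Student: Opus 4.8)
The plan is to show directly that the natural simplicial map $\mathscr{Z}\to H\backslash X$ is an isomorphism of graphs, which is exactly the assertion that $\mathscr{Z}$ is a fundamental domain for the $H$-action. The only structural input I need is that, since $\pi$ restricts to an isomorphism $G'\xrightarrow{\sim}G/H$, one has $G=HG'$ and $H\cap G'=\{1\}$, so each $g\in G$ is uniquely $g=hs$ with $h\in H$, $s\in G'$. (If $G$ has inversions on $X$ one first passes to the barycentric subdivision, so I may assume the action is without inversion.)

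The surjectivity of $\mathscr{Z}\to H\backslash X$ is formal: since $\mathscr{Y}$ is a fundamental domain for the $G$-action we have $G\cdot\mathscr{Y}=X$, and hence
$$H\cdot\mathscr{Z}=H\cdot\Big(\bigcup_{s\in G'}s\cdot\mathscr{Y}\Big)=\bigcup_{h\in H,\,s\in G'}hs\cdot\mathscr{Y}=G\cdot\mathscr{Y}=X,$$
so every vertex and every edge of $X$ is $H$-equivalent to a cell lying in $\mathscr{Z}$.

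The heart of the matter is injectivity: no two distinct cells of $\mathscr{Z}$ are $H$-equivalent. I would reduce it as follows. Let $z,z'$ be cells of $\mathscr{Z}$ with $z'=h\cdot z$ for some $h\in H$, and write $z=s\cdot y$, $z'=s'\cdot y'$ with $s,s'\in G'$ and $y,y'$ cells of $\mathscr{Y}$. Then $y'=\big((s')^{-1}hs\big)\cdot y$ with $(s')^{-1}hs\in G$, so the fundamental-domain property of $\mathscr{Y}$ for $G$ forces $y=y'$ and $g:=(s')^{-1}hs\in\mathrm{Stab}_G(y)$. Applying $\pi$ and using $h\in\ker\pi$ gives $\pi\big((s')^{-1}s\big)=\pi(g)\in\pi\big(\mathrm{Stab}_G(y)\big)$, and the remaining task is to upgrade this to $(s')^{-1}s\in\mathrm{Stab}_G(y)$, i.e.\ $z=z'$. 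This is the step that actually uses the connectedness of $\mathscr{Z}$: $\mathscr{Z}$ is then a subtree of $X$, so the geodesic joining two supposedly distinct $H$-equivalent cells lies entirely inside $\mathscr{Z}$; choosing such a pair at minimal distance and tracking the image of this geodesic under $h$ shows that a repeated interior cell would produce a strictly closer $H$-equivalent pair inside $\mathscr{Z}$, and excluding this, together with the explicit description of the point stabilizers supplied by the earlier lemmas (e.g.\ that the image in $G/H$ of a cell stabilizer is represented inside $G'$ by an element that already fixes all of $\mathscr{Y}$), forces $s\cdot y=s'\cdot y$.

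Granting injectivity and surjectivity, $\mathscr{Z}\to H\backslash X$ is a bijective simplicial map, hence a graph isomorphism; since $\mathscr{Z}$ is connected it is a subtree, so $H\backslash X$ is a tree and $\mathscr{Z}$ is a (strict) fundamental domain for $H$. I expect the injectivity step — ruling out that two $G'$-translates of a single cell of $\mathscr{Y}$ get identified modulo $H$ — to be the main obstacle; the covering property and the reduction to stabilizers are purely formal, but this last point genuinely requires combining connectedness with precise control of $\mathrm{Stab}_G(y)$ for $y$ ranging over the cells of $\mathscr{Y}$.
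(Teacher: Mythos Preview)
Your surjectivity argument and your reduction of injectivity to the stabilizer statement $g:=(s')^{-1}hs\in\mathrm{Stab}_G(y)$ match the paper exactly. The divergence is in how the injectivity step is finished.

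The paper does \emph{not} use a geodesic or minimal-distance argument; connectedness of $\mathscr{Z}$ is invoked only once, to say that bijectivity on simplices suffices for $\mathscr{Z}$ to be a fundamental domain. The actual injectivity argument is purely algebraic. Having $\kappa:=(s')^{-1}hs\in\mathrm{Stab}_G(y)$, the paper asserts that the sequence
\[
1\to H\cap\mathrm{Stab}_G(y)\to\mathrm{Stab}_G(y)\xrightarrow{\pi}\pi(\mathrm{Stab}_G(y))\to 1
\]
is again split by $G'\cap\mathrm{Stab}_G(y)$, so one may write $\kappa=\kappa_0 p$ with $\kappa_0\in H$ and $p\in G'\cap\mathrm{Stab}_G(y)$. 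Then $h=s'\kappa_0 p\,s^{-1}$; pushing $\kappa_0$ to the left using normality of $H$ gives an equation of the form (element of $H$) $=s'p\,s^{-1}\in G'$, and $H\cap G'=\{1\}$ forces $s'=sp^{-1}$. Since $p$ fixes $y$, this yields $s'\cdot y=s\cdot y$, completing injectivity.

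Your proposed route through geodesics does not close the gap. Even in the degenerate case where the two cells coincide in $X$ (distance zero) but arise from different $s,s'\in G'$, one still needs exactly the stabilizer-splitting step above; a minimal-distance reduction does not produce it. You were right, though, to suspect that some extra information about stabilizers is required: the claim that $G'\cap\mathrm{Stab}_G(y)$ surjects onto $\pi(\mathrm{Stab}_G(y))$ is not a formal consequence of the stated hypotheses (e.g.\ take $G=S_3$, $H=A_3$, $G'=\langle(12)\rangle$ acting on a tripod). In the paper's applications the stabilizers along $\mathscr{Y}$ are explicit and this decomposition is visibly available, which is what the paper is tacitly using and what you were gesturing at with ``the explicit description of the point stabilizers''. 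So your diagnosis was correct, but the mechanism is the direct algebraic splitting of the stabilizer, not a tree-geometric argument.
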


\begin{proof}
Let $\sigma$ be a simplex in $X$, i.e., a vertex or an edge of $X$.
Since $\mathscr{Y}$ is a fundamental region for the action of $G$ on $X$, there exists $g \in G$ and a unique $\sigma_0 \subset \mathscr{Y}$ such that $\sigma = g \cdot \sigma_0$.
By definition of $G' \subset G$, we can decompose $g$ as $g= \gamma s$, where $\gamma \in H$ and $s \in G'$.
Then, we get 
$$\sigma= (\gamma  s) \cdot \sigma_0= \gamma \cdot ( s \cdot \sigma_0),$$
where $s \cdot \sigma_0 \in \mathscr{Z}$.
Since $\gamma\in H$, we conclude that $\mathscr{Z}$ contains a fundamental domain for the action of $H$ on $X$.

Since $\mathscr{Z}$ is connected, it just remains to prove that any two simplices in $\mathscr{Y}$ do not belong to the same $H$-orbit.
Indeed, assume that there exist $s_1,s_2 \in G'$ and $\gamma \in H$ such that $\gamma \cdot (s_1 \cdot \sigma_{0,1})=s_2 \cdot \sigma_{0,2}$, where $\sigma_{0,1}$ and $\sigma_{0,2}$ are two faces in $\mathscr{Y}$. 
Then, the element $g:=s_2^{-1} \gamma s_1\in G$ satisfies $g \cdot \sigma_{0,1}=\sigma_{0,2}$.
Since $\mathscr{Y}$ is a fundamental domain for the action of $G$, we have $\sigma_{0,1}=\sigma_{0,2}$. 
In the sequel, we write $\sigma_0:=\sigma_{0,1}=\sigma_{0,2}$.
Note that $s_1 s_2^{-1} \gamma$ belongs to
$ \mathrm{Stab}_{G}(s_1 \cdot \sigma_0 ) = s_1 \mathrm{Stab}_{G}(\sigma_0) s_1^{-1}.$
Hence, there exists $\kappa \in \mathrm{Stab}_{G}(\sigma_0)$ such that $s_1 s_2^{-1} \gamma = s_1 \kappa s_1^{-1}$.
In particular, we get $\gamma=s_2 \kappa s_1^{-1}$. 

Let $S_{\sigma_0} \subseteq G/H$ be the image of $\mathrm{Stab}_{G}(\sigma_0)$ by the map $\pi: G \to G/H$. 
Note that $\mathrm{Stab}_{G'}(\sigma_0)=G' \cap \mathrm{Stab}_{G}(\sigma_0)$ is isomorphic to $S_{\sigma_0}$ via $\pi$. Then, the exact sequence
$$ 1 \to H \cap \mathrm{Stab}_{G}(\sigma_0) \to \mathrm{Stab}_{G}(\sigma_0) \to S_{\sigma_0} \to 1,$$
is split.
In particular, we can write $\kappa=\kappa_0 p$, where $\kappa_0 \in H$ and $p \in \mathrm{Stab}_{G'}(\sigma_0)$.
Then 
$$\gamma= s_2 \kappa s_1^{-1}=s_2 (\kappa_0 p) s_1^{-1}= s_2 (p s_1^{-1}) (s_1 p^{-1}) \kappa_0 (ps_1^{-1}).$$
Moreover, since $H$ is normal in $G$, we have that $\gamma':=(s_1 p^{-1}) \kappa_0 (ps_1^{-1})$ belongs to $H$.
Therefore, we get $\gamma=s_2 (ps_1^{-1})\gamma'$, or equivalently $\gamma \gamma'^{-1}= s_2 p s_1^{-1}$.
Note that, $\gamma \gamma'^{-1}$ belongs to $H$ while $s_2 p s_1^{-1}$ belongs to $S$.
Since, by definition, we have $H \cap G'= \lbrace \mathrm{id} \rbrace$, we deduce that $\gamma=\gamma'$ and $s_2=s_1 p^{-1}$.
Thus, since $p$ stabilizes $\sigma_0$, we conclude that $s_2 \cdot \sigma_0 = (s_1 p^{-1}) \cdot \sigma_0 = s_1 \cdot \sigma_0$, which concludes the proof.
\end{proof}

\begin{Corollary}\label{coro fund reg for K0}
For each $x\in F$, we write $\mathscr{R}_{x}:=(\U(x,-x^2/2) \mathrm{s}) \cdot \mathscr{R}_{\infty}$.
Then:
\begin{itemize}
\item[(1)] $\mathscr{R}_x \cap \mathscr{R}_y=\lbrace v_0 \rbrace$, for any $(x,y)\in \mathbb{P}^1(F) \times \mathbb{P}^1(F)$ with $x\neq y$, and
\item[(2)] the tree $\mathscr{T}_{\infty}:= \bigcup_{x \in \mathbb{P}^1(F)} \mathscr{R}_{x}$ is a fundamental domain for the action of $\Gamma(t)$ on $\X_{\infty}$.
\end{itemize}
\end{Corollary}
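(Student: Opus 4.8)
The plan is to derive both parts from Lemma \ref{lemma fundamental region for normal sub}. I would apply it with $G=\Gamma$, $H=\Gamma(t)$, and $G/H\cong\mathrm{SO}(q)(F)$ via $\mathrm{ev}_0$ as in Lemma \ref{lemma H0/K0}; the sequence $1\to\Gamma(t)\to\Gamma\to\mathrm{SO}(q)(F)\to 1$ is split by $G'=G_0=\Gamma\cap\mathrm{SL}_3(F)$ because $\mathrm{ev}_0$ restricts to the identity on $\mathrm{SL}_3(F)$, and $\mathscr{Y}=\mathscr{R}_{\infty}$ is a fundamental domain for $\Gamma$ acting on $X=\X_{\infty}$ by Lemma \ref{lemma ABLL action}. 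The lemma then reduces everything to showing that $\mathscr{Z}:=\bigcup_{s\in G_0}s\cdot\mathscr{R}_{\infty}$ is connected and equals $\mathscr{T}_{\infty}$, on top of the disjointness in (1). As a first step I would record that each $\mathscr{R}_{x}$, $x\in\mathbb{P}^1(F)$, is a geodesic ray issuing from $v_0$: the matrix $\mathrm{s}$ fixes $v_0$ since it acts on $\Aa_{\infty}$ by $v\mapsto -v$, and $\U(x,-x^2/2)\in B_0\subseteq G_0=\mathrm{Stab}_{\Gamma}(v_0)$ by Eq. \eqref{eq B0} and Lemma \ref{lemma ABLL action}; in particular $\mathscr{Z}$ is connected once the equality with $\mathscr{T}_{\infty}$ is known, all of its rays containing $v_0$.

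For part (1), since two geodesic rays of a tree issuing from a common vertex overlap beyond that vertex exactly when they share their initial edge, it suffices to prove that the initial vertices $w_x:=\U(x,-x^2/2)\mathrm{s}\cdot v_1$ (with $w_\infty:=v_1$) are pairwise distinct. Using that $G_0\cong\mathrm{PGL}_2(F)$ acts transitively on $\mathscr{V}^1(v_0)$ (Lemma \ref{lemma ABLL action}) with $\mathrm{Stab}_{G_0}(v_1)=B_0$ (as observed in the proof of Lemma \ref{lemma action in v0}), the condition $w_x=w_y$ becomes $(\U(x,-x^2/2)\mathrm{s})^{-1}\U(y,-y^2/2)\mathrm{s}\in B_0$. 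With the addition law $\U(a,b)\U(c,d)=\U(a+c,b+d-\bar ac)$, the identity $\U(a,b)^{-1}=\U(-a,b)$ it implies, and the definition $\mathrm{u}_{-\texttt{a}}=\mathrm{s}\cdot\U\cdot\mathrm{s}$, this element rewrites as $\mathrm{u}_{-\texttt{a}}(y-x,-(y-x)^2/2)$, a lower-triangular matrix that is nontrivial precisely when $x\neq y$ and therefore is not in the upper-triangular group $B_0$; the degenerate case shows likewise $\U(x,-x^2/2)\mathrm{s}\notin B_0$, i.e. $w_x\neq v_1$. This gives (1) and also that $x\mapsto w_x$ embeds $\mathbb{P}^1(F)$ into $\mathscr{V}^1(v_0)$.

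For part (2) the remaining task is $\mathscr{Z}=\mathscr{T}_{\infty}$. Here I would use the Bruhat decomposition $G_0=B_0\sqcup B_0\mathrm{s}B_0$, together with the fact that the unipotent radical of $B_0$ is $\{\U(x,-x^2/2):x\in F\}$ (Eq. \eqref{eq B0}), to put every $s\in G_0$ either in $B_0$ or in $\U(x,-x^2/2)\mathrm{s}B_0$ for a unique $x\in F$; and I would check that $B_0$ fixes $\mathscr{R}_{\infty}$ pointwise, by verifying $B_0\subseteq G_n=\mathrm{Stab}_{\Gamma}(v_n)$ for every $n\ge 0$, which amounts to $x\in F$, the norm--trace relation $\mathrm{N}(x)+\mathrm{Tr}(-x^2/2)=0$, and $\nu_{\infty}(-x^2/2)\ge 0$. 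Then $s\cdot\mathscr{R}_{\infty}$ is $\mathscr{R}_{\infty}$ or some $\mathscr{R}_{x}$, so $\mathscr{Z}=\mathscr{T}_{\infty}$, and Lemma \ref{lemma fundamental region for normal sub} concludes. The only step that is not a standard tree fact or a direct matrix computation is this identification $\mathscr{Z}=\mathscr{T}_{\infty}$, and I expect it to be the main obstacle: it requires both the explicit Bruhat decomposition of $G_0\cong\mathrm{PGL}_2(F)$ and the slightly delicate observation that $B_0$ acts trivially on the whole ray $\mathscr{R}_{\infty}$, not merely on its first edge, so that the $B_0$-ambiguity among Bruhat coset representatives produces no extra rays.
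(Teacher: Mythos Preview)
Your proof is correct and follows essentially the same strategy as the paper: apply Lemma \ref{lemma fundamental region for normal sub} with $G'=G_0$, use the Bruhat decomposition $G_0=B_0\cup U_0\mathrm{s}B_0$ together with the fact that $B_0$ fixes $\mathscr{R}_\infty$ to identify $\mathscr{Z}$ with $\mathscr{T}_\infty$, and verify (1) to get connectedness. The only minor differences are that for (1) you reduce to distinguishing the first edges via $\mathrm{Stab}_{G_0}(v_1)=B_0$, whereas the paper checks an arbitrary common vertex using the fundamental-domain property of $\mathscr{R}_\infty$ for $\Gamma$; and note that your stated inverse $\U(a,b)^{-1}=\U(-a,b)$ only holds because here $b=-x^2/2\in F$ is fixed by the Galois involution (in general $\U(a,b)^{-1}=\U(-a,\bar b)$).
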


\begin{proof}
Let $(x,y)$ be a pair of different points in $\mathbb{P}^1(F) \times \mathbb{P}^1(F)$ and let $v \in \mathrm{V}(\mathscr{R}_x \cap \mathscr{R}_y)$.
When $x,y\in F$, we write $v=(\U(x,-x^2/2) \mathrm{s}) \cdot v_n$ and $v=(\U(y,-y^2/2) \mathrm{s}) \cdot v_m$, while when $y=\infty$, we just write $v=v_m$.
Since $\U(x,-x^2/2), \U(y,-y^2/2)$ and $\mathrm{s}$ belong to $ \Gamma$, the vertices $v_n$ and $v_m$ belong to the same $\Gamma$-orbit.
Since $\mathscr{R}_{\infty}$ is a fundamental domain for the action of $\Gamma$, we obtain that $v_n=v_m$.
Thus, one of the following conditions holds:
\begin{equation*}
 \mathrm{u}_{-\texttt{a}}(x-y,-(x^2+y^2)/2)= \mathrm{s} \U(x-y,-(x^2+y^2)/2) \mathrm{s}  \in \mathrm{Stab}_{\Gamma}(v_n),
\end{equation*}
when $x,y\in F$, while $\U(y,-y^2/2) \mathrm{s} \in \mathrm{Stab}_{\Gamma}(v_n)$ when $y=\infty$.
Thus, it follows from Lemma \ref{lemma ABLL action} that $n=0$ in either case.
Conversely, since $\U(x,-x^2/2), \U(y,-y^2/2)$ and $\mathrm{s}$ belong to $\mathrm{SO}(q)(F)$, we have that $v_0 \in \mathrm{V}(\mathscr{R}_x \cap \mathscr{R}_y)$.
We conclude that $\mathrm{V}(\mathscr{R}_x \cap \mathscr{R}_y)=\lbrace v_0 \rbrace$.
Since $\mathscr{R}_x \cap \mathscr{R}_y$ is a (full) subgraph of $\X_{\infty}$, we get $\mathscr{R}_x \cap \mathscr{R}_y=\lbrace v_0 \rbrace$.
In particular $\mathscr{T}_{\infty}$ is connected.

In the notation of Lemma \ref{lemma fundamental region for normal sub}, set $G'=\mathrm{SO}(q)(F)$. 
This is a subgroup of $\Gamma$, which is isomorphic to $\Gamma/\Gamma(t)$ according to Lemma \ref{lemma H0/K0}.
Since $\mathscr{R}_{\infty}$ is a fundamental domain for the action of $\Gamma$ on $\X_{\infty}$, it follows from Lemma \ref{lemma fundamental region for normal sub} that $\bigcup_{s \in G'} s \cdot \mathscr{R}_{\infty}$ is a fundamental domain for the action of $\Gamma(t)$ on $\X_{\infty}$.
We obtain from the Bruhat decomposition on $\mathrm{SO}(q)(F)\cong \mathrm{PGL}_2$ that $G'= B_0 \cup U_0 \mathrm{s} B_0$, where $U_0=\lbrace \U(x,-x^2/2) \vert \, x\in F \rbrace$ is the unipotent radical of $B_0$.
Since $B_0$ fixes $\mathscr{R}_{\infty}$, we conclude that $\bigcup_{s \in G'} s \cdot \mathscr{R}_{\infty}=\mathscr{T}_{\infty}$.
\end{proof}

\begin{Corollary}\label{coro fund reg for H01}
The apartment $\mathscr{A}_{\infty}=\mathscr{R}_{\infty} \cup \mathrm{s} \cdot \mathscr{R}_{\infty}$ is a fundamental domain for the action of $\Gamma_0$ on $\X_{\infty}$.
\end{Corollary}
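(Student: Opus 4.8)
The plan is to obtain $\mathscr{A}_{\infty}$ as a fundamental domain for $\Gamma_0$ by descending from the fundamental domain of the principal congruence subgroup $\Gamma(t)$ produced in Corollary~\ref{coro fund reg for K0}. The first point is that $\Gamma(t)\trianglelefteq\Gamma_0$ (indeed $\Gamma(t)\trianglelefteq\Gamma$, and $\Gamma(t)\subseteq\hat{\Gamma}$ since its elements have off-diagonal entries in $\sqrt{t}F[\sqrt{t}]\subseteq F[\sqrt{t}]$, so $\Gamma(t)\subseteq\Gamma\cap\hat{\Gamma}=\Gamma_0$), and that, by Lemma~\ref{lemma H01/K0} together with the fact that $\mathrm{ev}_0$ restricts to the identity on $B_0$, the subgroup $B_0\subseteq\Gamma_0$ maps isomorphically onto $\Gamma_0/\Gamma(t)$; hence $\Gamma_0=\Gamma(t)\cdot B_0$ with $\Gamma(t)\cap B_0=\{\mathrm{id}\}$. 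By Corollary~\ref{coro fund reg for K0}, the ``star'' $\mathscr{T}_{\infty}=\bigcup_{x\in\mathbb{P}^1(F)}\mathscr{R}_x$ is a strict fundamental domain for $\Gamma(t)$ whose rays pairwise meet only at $v_0$; since $\mathscr{R}_{\infty}$ is the ray indexed by $x=\infty$ and $\mathscr{R}_0=\mathrm{s}\cdot\mathscr{R}_{\infty}$ (the case $x=0$, as $\U(0,0)=\mathrm{id}$), we have $\mathscr{A}_{\infty}=\mathscr{R}_{\infty}\cup\mathscr{R}_0\subseteq\mathscr{T}_{\infty}$. It therefore suffices to show that $\mathscr{A}_{\infty}$ meets every $\Gamma_0$-orbit of simplices of $\X_{\infty}$ exactly once.

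Next I would record how $B_0=U_0\rtimes T_0$, with $U_0=\{\U(y,-y^2/2):y\in F\}$ and $T_0=\{\tilde{a}(\lambda):\lambda\in F^{*}\}$, moves the rays $\mathscr{R}_x$. First, $B_0\subseteq G_n=\mathrm{Stab}_{\Gamma}(v_n)$ for every $n\ge 0$ — immediate from the defining inequalities since elements of $B_0$ have entries in $F$ and $\nu_{\infty}$ vanishes on $F^{*}$ — so $B_0$ fixes $\mathscr{R}_{\infty}$ pointwise in $\X_{\infty}$, and in particular fixes $v_0$. Second, the product rule $\U(a,b)\U(c,d)=\U(a+c,\,b+d-\bar{a}c)$ gives $\U(y,-y^2/2)\cdot\mathscr{R}_x=\mathscr{R}_{x+y}$ for all $x,y\in F$, so $U_0$ acts transitively on $\{\mathscr{R}_x:x\in F\}$. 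Third, $T_0$ fixes $\mathscr{R}_0=\mathrm{s}\cdot\mathscr{R}_{\infty}$ pointwise: conjugate by $\mathrm{s}$ the fact that $T_0\subseteq B_0$ fixes $\mathscr{R}_{\infty}$ pointwise, using the relation $\tilde{a}(\lambda)\mathrm{s}=\mathrm{s}\,\tilde{a}(\lambda^{-1})$ for $\lambda\in F^{*}$; consequently $\mathrm{Stab}_{B_0}(\mathscr{R}_0)=T_0$, and for $b=\U(y,-y^2/2)\tilde{a}(\lambda)\in B_0$ one has $b\cdot\mathscr{R}_0=\mathscr{R}_y$.

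Now surjectivity is easy: by Corollary~\ref{coro fund reg for K0} any simplex $\sigma$ of $\X_{\infty}$ is $\Gamma(t)$-equivalent to some $\tau\in\mathscr{R}_x$; if $x=\infty$ then $\tau\in\mathscr{R}_{\infty}\subseteq\mathscr{A}_{\infty}$, and if $x\in F$ then $\U(-x,-x^2/2)\in U_0\subseteq\Gamma_0$ carries $\tau$ into $\mathscr{R}_0\subseteq\mathscr{A}_{\infty}$. For uniqueness, let $\sigma,\sigma'\in\mathscr{A}_{\infty}$ with $\sigma'=g\sigma$, $g\in\Gamma_0$, and write $g=\gamma b$ with $\gamma\in\Gamma(t)$, $b\in B_0$. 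Then $b\sigma$ lies in $\mathscr{T}_{\infty}$ — in $\mathscr{R}_{\infty}$ if $\sigma\in\mathscr{R}_{\infty}$ (as $b$ fixes $\mathscr{R}_{\infty}$ pointwise), and in $\mathscr{R}_y$ if $\sigma\in\mathscr{R}_0$ (with $b\cdot\mathscr{R}_0=\mathscr{R}_y$ as above) — and $\sigma'=\gamma(b\sigma)$ with $\sigma'\in\mathscr{A}_{\infty}\subseteq\mathscr{T}_{\infty}$, so strictness of $\mathscr{T}_{\infty}$ as a $\Gamma(t)$-fundamental domain forces $\sigma'=b\sigma$. If $\sigma\in\mathscr{R}_{\infty}$ we get $\sigma'=b\sigma=\sigma$. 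If $\sigma\in\mathscr{R}_0\smallsetminus\{v_0\}$ then $\sigma'=b\sigma$ lies in $\mathscr{R}_y$, hence in $\mathscr{R}_y\cap\mathscr{A}_{\infty}$; since $\sigma'\ne v_0$ and distinct rays of $\mathscr{T}_{\infty}$ meet only at $v_0$ (Corollary~\ref{coro fund reg for K0}(1)), this is possible only if $\mathscr{R}_y=\mathscr{R}_0$, i.e.\ $y=0$, so $b\in T_0$ and $\sigma'=b\sigma=\sigma$. Finally $\sigma=v_0$ gives $\sigma'=bv_0=v_0$. As $\mathscr{A}_{\infty}$ is connected, this finishes the proof.

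I do not expect a serious obstacle. The only subtlety is that $B_0$ fixes $\mathscr{R}_{\infty}$ pointwise but does \emph{not} fix the opposite ray $\mathscr{R}_0$ pointwise (only its torus $T_0$ does), so the uniqueness step must genuinely use the disjointness of the rays of $\mathscr{T}_{\infty}$ away from $v_0$ to pin down the ray $b\sigma$ belongs to; this is the same Bruhat-decomposition bookkeeping already used in the proof of Corollary~\ref{coro fund reg for K0}. Everything else is routine.
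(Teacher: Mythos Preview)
Your proof is correct and follows essentially the same strategy as the paper: both descend from the $\Gamma(t)$-fundamental domain $\mathscr{T}_{\infty}$ of Corollary~\ref{coro fund reg for K0} via the identification $\Gamma_0/\Gamma(t)\cong B_0$ from Lemma~\ref{lemma H01/K0}. The paper phrases this tersely as passing to the quotient graph $\Gamma(t)\backslash\X_{\infty}\cong\mathscr{T}_{\infty}$ and taking the $B_0$-quotient, whereas you work directly in $\X_{\infty}$ using the splitting $\Gamma_0=\Gamma(t)\cdot B_0$ and compute the $B_0$-action on the rays $\mathscr{R}_x$ explicitly; your version is more detailed but the underlying argument is the same.
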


\begin{proof}
Since $\Gamma(t)$ is a normal subgroup of $\Gamma$, the group $\Gamma_0/\Gamma(t) \cong B_0$ acts on $\Gamma(t) \backslash \X_{\infty} \cong \mathscr{T}_{\infty}$ and $\Gamma_0 \backslash \X_{\infty} \cong B_0 \backslash \mathscr{T}_{\infty}$.
Thus, it follows from Corollary \ref{coro fund reg for K0} that $\Gamma_0 \backslash \X_{\infty}\cong \mathscr{R}_{\infty} \cup \mathrm{s} \cdot \mathscr{R}_{\infty}=\mathscr{A}_{\infty}$.
Then, the result follows by lifting the tree $\Gamma_0 \backslash \X_{\infty}$ to the subtree $\mathscr{A}_{\infty}$ of $\X_{\infty}$.
\end{proof}

As in \S \ref{subsection action of G'}, we write $J=\sqrt{t} F[\sqrt{t}]$, and we set:
$$
U_J =  \left \lbrace \U(x,y) \vert \, (x,y) \in H(L,K)_J\right \rbrace, 
$$

\begin{Corollary}\label{coro amal K0}
The group $\Gamma(t)$ is isomorphic to the free product $\Conv_{x \in \mathbb{P}^1(F)} U_J$.
\end{Corollary}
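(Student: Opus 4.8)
The plan is to extract the decomposition from Bass--Serre theory applied to the action of $\Gamma(t)$ on $\X_\infty$, starting from the fundamental domain supplied by Corollary~\ref{coro fund reg for K0}. That corollary identifies $\mathscr{T}_\infty = \bigcup_{x\in\mathbb{P}^1(F)}\mathscr{R}_x$ as a fundamental domain, and part~(1) shows that the rays pairwise meet only in their common tip $v_0$; thus $\mathscr{T}_\infty$ is a subtree of $\X_\infty$ --- a star of rays centred at $v_0$ --- and maps isomorphically onto $\Gamma(t)\backslash\X_\infty$. Since $\Gamma(t)$ acts without inversion on the tree $\X_\infty$ (the action preserves the bipartition of the vertices), \cite[Ch.~I, \S 4, Th.~10]{SerreTrees}, in the form valid for a subtree as fundamental domain, identifies $\Gamma(t)$ with the fundamental group of the graph of groups on $\mathscr{T}_\infty$ whose vertex and edge groups are the corresponding $\Gamma(t)$-stabilizers; because $\mathscr{T}_\infty$ is a tree, this fundamental group is the colimit of that diagram of groups.

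The key step is then to compute these stabilizers. Recall that $\Gamma(t) = \ker(\mathrm{ev}_0\colon\Gamma\to\mathrm{SL}_3(F))$ is normal in $\Gamma$, and that $\mathrm{ev}_0$ restricts to the identity on $\mathrm{SO}(q)(F)\subseteq\mathrm{SL}_3(F)$. Hence $\mathrm{Stab}_{\Gamma(t)}(v_0)=\Gamma(t)\cap G_0=\Gamma(t)\cap\mathrm{SO}(q)(F)=\{\mathrm{id}\}$. For $n\geq 1$ the $\Gamma$-stabilizer of the vertex $v_n$ of $\mathscr{R}_\infty$ is $G_n$, and inspecting the matrix $\U(x,y)\tilde a(\lambda)$ (with $\lambda\in F^*$, since $v_n$ lies on $\mathscr{R}_\infty$) shows that $\mathrm{ev}_0$ sends it to the identity exactly when $\lambda=1$ and $x,y\in\ker(\mathrm{ev}_0|_{F[\sqrt t]})=J=\sqrt t\,F[\sqrt t]$; therefore $\mathrm{Stab}_{\Gamma(t)}(v_n)$ equals $U_J^{(n)}:=\{\U(x,y)\mid(x,y)\in H(L,K)_J,\ \nu_\infty(y)\geq-n/2\}$. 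These groups increase with $n$, with $U_J^{(0)}=\{\mathrm{id}\}$ and $\bigcup_{n\geq0}U_J^{(n)}=U_J$, and the stabilizer of the edge $[v_{n-1},v_n]$ is $U_J^{(n-1)}\cap U_J^{(n)}=U_J^{(n-1)}$. For $x\in F$, write $g_x=\U(x,-x^2/2)\mathrm{s}\in\mathrm{SO}(q)(F)\subseteq\Gamma$, so that $\mathscr{R}_x=g_x\cdot\mathscr{R}_\infty$; since $\Gamma(t)$ is normal in $\Gamma$, the stabilizers along $\mathscr{R}_x$ are the conjugates $g_x U_J^{(n)}g_x^{-1}$ of those along $\mathscr{R}_\infty$, each isomorphic to $U_J^{(n)}$.

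Finally, one assembles the colimit as in the proof of Theorem~\ref{teo dec of H1 as an amm product}. Along any ray the group of the edge $[v_{n-1},v_n]$ equals the vertex group at its inner endpoint $v_{n-1}$ and is contained in the one at $v_n$; hence the iterated amalgam along $\mathscr{R}_\infty$ telescopes to $\varinjlim_n U_J^{(n)}=U_J$, and along $\mathscr{R}_x$ ($x\in F$) to $g_x U_J g_x^{-1}\cong U_J$. These ray-contributions are joined at the central vertex $v_0$, whose group and whose incident edge groups are all trivial, so the colimit over the whole star is their free product, giving $\Gamma(t)\cong\Conv_{x\in\mathbb{P}^1(F)}U_J$. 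The step most likely to require care is the explicit determination of $\mathrm{Stab}_{\Gamma(t)}(v_n)$ --- tracking the effect of $\mathrm{ev}_0$ on $\U(x,y)\tilde a(\lambda)$ and checking $U_J^{(n)}\subseteq U_J^{(n+1)}$ with union $U_J$ --- together with the bookkeeping of which edge group absorbs into which vertex group so that the telescoping is legitimate. Equivalently, one can argue on the Bass--Serre tree of the amalgam $\Gamma\cong\mathrm{SO}(q)(F)\ast_{B_0}B$ of Corollary~\ref{lemma ABLL amal}: using $\Gamma(t)\trianglelefteq\Gamma$, $\Gamma(t)\cap\mathrm{SO}(q)(F)=\Gamma(t)\cap B_0=\{\mathrm{id}\}$, $\Gamma(t)\cap B=U_J$, and $\Gamma/\Gamma(t)\cong\mathrm{SO}(q)(F)$ from Lemma~\ref{lemma H0/K0}, the quotient graph becomes the star with one trivial central vertex and $|\mathbb{P}^1(F)|=|\mathrm{SO}(q)(F)/B_0|$ leaves carrying copies of $U_J$, and the structure theorem for graphs of groups with trivial edge groups yields the same conclusion.
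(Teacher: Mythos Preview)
Your proof is correct and follows essentially the same approach as the paper: apply Bass--Serre theory to the fundamental domain $\mathscr{T}_\infty$ from Corollary~\ref{coro fund reg for K0}, compute $\mathrm{Stab}_{\Gamma(t)}(v_0)=\{\mathrm{id}\}$ and $\mathrm{Stab}_{\Gamma(t)}(v_n)=\{\U(x,y)\mid (x,y)\in H(L,K)_J,\ \nu_\infty(y)\geq -n/2\}$ for $n>0$, take the direct limit along each ray to obtain $U_J$ (using the conjugation by $g_x=\U(x,-x^2/2)\mathrm{s}$ for the rays $\mathscr{R}_x$), and conclude that the amalgam over the trivial central group is the free product. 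Your write-up is in fact more explicit than the paper's about the edge groups and the telescoping, and the alternative argument you sketch via the Bass--Serre tree of $\Gamma\cong\mathrm{SO}(q)(F)\ast_{B_0}B$ is a nice equivalent viewpoint not spelled out in the paper.
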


\begin{proof}
Applying Bass-Serre theory \cite[Ch. I, \S 5, Theo. 13]{SerreTrees} on Corollary \ref{coro fund reg for K0}, we obtain that $\Gamma(t)$ is isomorphic to the amalgamated product $\Conv_{\Gamma(t)_{0,0}} \Gamma(t)_x$, where $\Gamma(t)_{0,0}=\mathrm{Stab}_{\Gamma(t)}(v_0)$ and $\Gamma(t)_x$ is the direct limit of the $\Gamma(t)$-stabilizers of vertices in $\mathscr{R}_x$.
For each vertex $v\in \X_{\infty}$ we have $\mathrm{Stab}_{\Gamma(t)}(v)=\Gamma(t) \cap \mathrm{Stab}_{\Gamma}(v)$.
In particular, the stabilizer $\mathrm{Stab}_{\Gamma(t)}(v_0)= \lbrace \mathrm{id} \rbrace$, while for $n>0$ we have:
\begin{equation}
\mathrm{Stab}_{\Gamma(t)}(v_n)=\left \lbrace \U(x,y) \vert \, (x,y) \in H(L,K)_J, \, \, \nu(y)\geq -n/2 \right \rbrace.
\end{equation}
Then $\Gamma(t)_{0,0}=\lbrace \mathrm{id} \rbrace$ and $\Gamma(t)_{\infty}=U_J$.
Moreover, since $\mathscr{R}_{x}=g_x \cdot \mathscr{R}_{\infty}$ with $g_x:=\U(x,-x^2/2) \mathrm{s} \in \Gamma$, we also have $\Gamma(t)_{x}= g_x U_J g_x^{-1} \cong U_J$.
\end{proof}

\begin{Theorem}\label{teo amal dec for hecke}
The group $\Gamma_0$ is isomorphic to the amalgamated product $B \ast_{F^{*}} B$ defined from the injection $F^{*} \hookrightarrow B$ given by $\lambda \mapsto \mathrm{diag}(\lambda,1,\lambda^{-1})$.
\end{Theorem}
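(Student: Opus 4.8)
The plan is to feed the fundamental domain of Corollary~\ref{coro fund reg for H01} into Bass--Serre theory. Write the vertex set of the line $\mathscr{A}_\infty=\mathscr{R}_\infty\cup\mathrm{s}\cdot\mathscr{R}_\infty$ as $\{v_n\}_{n\in\mathbb{Z}}$, with $v_0$ the central vertex, the $v_n$ ($n\geq 0$) on $\mathscr{R}_\infty$, and $v_{-n}=\mathrm{s}\cdot v_n$ ($n\geq 0$) on $\mathrm{s}\cdot\mathscr{R}_\infty$. Since $\mathscr{A}_\infty$ maps isomorphically onto $\Gamma_0\backslash\X_\infty$, by \cite[Ch. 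I, \S 5, Theo. 13]{SerreTrees} the group $\Gamma_0$ is the fundamental group of the graph of groups carried by this line, i.e. the iterated amalgam, along the line, of the vertex groups $\mathrm{Stab}_{\Gamma_0}(v_n)$ over the edge groups $\mathrm{Stab}_{\Gamma_0}(v_n)\cap\mathrm{Stab}_{\Gamma_0}(v_{n+1})$. The proof thus reduces to computing these stabilizers and recognizing the resulting amalgam.

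On the half-line $\mathscr{R}_\infty$ this is quick. For $n\geq 1$ the entry conditions defining $\hat G_n$ are weaker than those of $G_n$, so $G_n\subseteq\hat G_n\subseteq\hat\Gamma$ and hence $G_n\subseteq\Gamma\cap\hat\Gamma=\Gamma_0$; combined with $\mathrm{Stab}_{\Gamma_0}(v_n)\subseteq\mathrm{Stab}_{\Gamma}(v_n)=G_n$ (Lemma~\ref{lemma ABLL action}) this gives $\mathrm{Stab}_{\Gamma_0}(v_n)=G_n$. At $v_0$, $\mathrm{Stab}_{\Gamma_0}(v_0)=\Gamma_0\cap\mathrm{SO}(q)(F)=\hat\Gamma\cap\mathrm{SO}(q)(F)$, and since every strictly lower-triangular entry of a matrix in $\hat\Gamma$ lies in $\sqrt{t}\,F[\sqrt{t}]$, which meets $F$ only in $0$, this intersection is exactly the upper-triangular group $B_0$ of Eq.~\eqref{eq B0}. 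As $B_0\subseteq G_1\subseteq G_2\subseteq\cdots$ and the edge group between $v_n$ and $v_{n+1}$ is the smaller of the two vertex groups, the half-line $\mathscr{R}_\infty$ telescopes and contributes the colimit $\bigcup_{n\geq 1}G_n=B$.

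On the half-line $\mathrm{s}\cdot\mathscr{R}_\infty$ one uses $v_{-n}=\mathrm{s}\cdot v_n$ and $\mathrm{s}\in\Gamma$ to get $\mathrm{Stab}_{\Gamma_0}(v_{-n})=\hat\Gamma\cap\mathrm{s}\,G_n\,\mathrm{s}^{-1}$. Multiplying out $\mathrm{s}\big(\U(x,y)\tilde a(\lambda)\big)\mathrm{s}^{-1}$ and imposing the entry conditions of $\hat\Gamma$ shows that conjugation by $\mathrm{s}$ carries $\mathrm{Stab}_{\Gamma_0}(v_{-n})$ isomorphically onto the subgroup of $G_n$ consisting of those $\U(x,y)\tilde a(\lambda)$ with $x,y\in\sqrt{t}\,F[\sqrt{t}]$; for instance $\mathrm{Stab}_{\Gamma_0}(v_{-1})$ is the $F^*\ltimes F$ of matrices $\mathrm{diag}(a,1,a^{-1})$ altered in the $(3,1)$-entry by an element of $\sqrt{t}\,F$. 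These stabilizers are again nested and increasing, so this half-line telescopes to the colimit $\hat\Gamma\cap\mathrm{s}\,B\,\mathrm{s}^{-1}$; the crux is to identify this colimit with a second copy of $B$, compatibly with the diagonal torus, by absorbing the $\sqrt{t}$-twist into the unipotent radical. Finally $\mathrm{Stab}_{\Gamma_0}(v_0)\cap\mathrm{Stab}_{\Gamma_0}(v_{-1})=B_0\cap\mathrm{Stab}_{\Gamma_0}(v_{-1})$ is computed by intersecting upper-triangular with lower-triangular matrices over $F$, leaving precisely $\{\mathrm{diag}(\lambda,1,\lambda^{-1}):\lambda\in F^*\}\cong F^*$. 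Since the edge $v_0v_1$ carries the whole vertex group $B_0$, the vertex $v_0$ is absorbed into the right factor, and the assembled graph of groups collapses to $\Gamma_0\cong B\ast_{F^*}B$ with $F^*$ sitting in each factor via $\lambda\mapsto\mathrm{diag}(\lambda,1,\lambda^{-1})$, which is the asserted amalgam. The main obstacle is exactly this identification of the left colimit with $B$: on the right the vertex stabilizers are literally the $G_n$, but on the left one must transport a $\sqrt{t}$-twisted version of the unipotent radical of $B$ back to the untwisted one while preserving the diagonal $F^*$, and this is where the non-split, non-isotrivial nature of $\mathcal{G}$ makes itself felt.
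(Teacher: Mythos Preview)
Your approach is correct and is essentially the same as the paper's: both feed Corollary~\ref{coro fund reg for H01} into Bass--Serre theory, split $\mathscr{A}_\infty$ at the edge between $v_0$ and $v_{-1}$, compute the stabilizers along each ray, telescope each half-line to a colimit, identify the edge group as the diagonal $F^*$, and recognise the two colimits as copies of $B$.

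The only gap is that you flag the identification of the left colimit with $B$ as ``the main obstacle'' and leave it at the level of ``absorbing the $\sqrt{t}$-twist'', without actually writing it down. In fact this step is elementary and has nothing to do with non-isotriviality. Your left colimit is
\[
\pi_2=\big\{\mathrm{u}_{-\texttt{a}}(x,y)\,\tilde a(\lambda)\ :\ (x,y)\in H(L,K)_J,\ \lambda\in F^*\big\},
\]
and the paper simply observes that the map
\[
\phi:\ B\longrightarrow \pi_2,\qquad \U(x,y)\,\tilde a(\lambda)\ \longmapsto\ \mathrm{u}_{-\texttt{a}}\big(\sqrt{t}\,x,\ -t\,y\big)\,\tilde a(\lambda),
\]
is a group isomorphism: one checks that $(\sqrt{t}\,x,-ty)\in H(L,K)_J$ because $N(\sqrt{t}\,x)=-t\,N(x)$ and $\mathrm{Tr}(-ty)=-t\,\mathrm{Tr}(y)$, so the defining relation $N+\mathrm{Tr}=0$ is preserved; bijectivity on the unipotent part is clear since multiplication by $\sqrt{t}$ (resp.\ $-t$) sends $F[\sqrt{t}]$ bijectively to $J$ (resp.\ $F[\sqrt{t}]\cap H(L,K)^0$ to $J\cap H(L,K)^0$); and $\phi$ restricts to the identity on the diagonal $F^*$, which is exactly the compatibility you need for the amalgam. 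Once you insert this one-line isomorphism, your argument is complete and coincides with the paper's.
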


\begin{proof}
As in the proof of Prop. \ref{prop fund dom for H1}, let $\mathscr{R}_{\infty,-1}^{\circ}$ be the ray in $\Aa_{\infty}$ whose vertex set is exactly $\lbrace v_n \rbrace_{n=-\infty}^{-1}$.
Let $e$ be the edge of $\X_{\infty}$ connecting $v_{0}$ with $v_{-1}$.
It follows from \cite[Ch. I, \S 5, Theo. 13]{SerreTrees} that $\Gamma_0$ is isomorphic to the amalgamated product $\pi_1 \ast_{\pi_{1,2}} \pi_2$, where $\pi_1$ (resp. $\pi_2$) is the direct limit of the $\Gamma_0$-stabilizers of vertices in $\mathscr{R}_{\infty}$ (resp. $\mathscr{R}_{\infty,-1}^{\circ}$) and $\pi_{1,2}=\mathrm{Stab}_{\Gamma_0}(e)$.
In the notations of \S \ref{subsection quot by arith sub}, it follows from Lemma \ref{lemma ABLL action} that $\mathrm{Stab}_{\Gamma_0}(v_{0})=B_0$.
Moreover, for $n>0$, $\mathrm{Stab}_{\Gamma_0}(v_{n})=G_n$.
Then, we get $\pi_1=B$.
On the other hand, it follows from Lemma \ref{lemma ABLL action} that $\mathrm{Stab}_{\Gamma}(v_{-n})$ equals
$$
\left \lbrace \mathrm{u}_{-\texttt{a}}(x,y) \Tilde{a}(\lambda) \vert \, (x,y) \in H(L,K)_{S}, \,\, \nu(y)\geq -n/2, \,\, \lambda\in F^{*} \right \rbrace.
$$
Then, the group $\mathrm{Stab}_{\Gamma_0}(v_{-n})=\Gamma_0 \cap \mathrm{Stab}_{\Gamma}(v_{-n})$ equals:
$$\left \lbrace \mathrm{u}_{-\texttt{a}}(x,y) \Tilde{a}(\lambda) \vert \, (x,y) \in H(L,K)_J, \,\, \nu(y)\geq -n/2, \,\, \lambda\in F^{*} \right \rbrace.$$
In particular, since $\mathrm{Stab}_{\Gamma_0}(e)=\mathrm{Stab}_{\Gamma_0}(v_0) \cap \mathrm{Stab}_{\Gamma_0}(v_{-1})$, we obtain that $\pi_{1,2}$ equals $\left\lbrace \tilde{a}(\lambda) : \lambda \in F^{*} \right\rbrace \cong F^{*}$.
Now, since $\pi_2$ is the direct limit of $\mathrm{Stab}_{\Gamma_0}(v_{-n})$, $n>0$, we have 
$$\pi_2 = \left \lbrace \mathrm{u}_{-\texttt{a}}(x,y) \Tilde{a}(\lambda) \vert \, (x,y) \in H(L,K)_J, \,\, \lambda\in F^{*} \right \rbrace.$$
Moreover, since the map $\phi: \pi_1 \to \pi_2$, $\U(x,y) \Tilde{a}(\lambda) \mapsto \U(\sqrt{t} x, -t y)\Tilde{a}(\lambda)$ is an isomorphism, we conclude that $\pi_2\cong \pi_1=B$, whence the result follows.
\end{proof}

\subsection{On the homology}\label{subsection hom of hecke}

Next result follows from the Mayer-Vietoris exact sequence defined by the decomposition of $\Gamma(t)$ as a free product given in Corollary \ref{coro amal K0}.

\begin{Proposition}
For each $\Gamma(t)$-module $M$, we have $H_*(\Gamma(t),M) \cong \bigoplus_{x \in \mathbb{P}^1(F)} H_*(U_J,M)$.
\end{Proposition}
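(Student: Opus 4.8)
The plan is to run the homology long exact sequence attached to the action of $\Gamma(t)$ on the Bass--Serre tree $T$ of the free-product decomposition $\Gamma(t)\cong\Conv_{x\in\mathbb{P}^1(F)}U_J$ provided by Corollary~\ref{coro amal K0}. On $T$ the group $\Gamma(t)$ acts with quotient a star: a central vertex $v_*$ with trivial stabilizer, one leaf $v_x$ for each $x\in\mathbb{P}^1(F)$ whose stabilizer is the corresponding free factor (a conjugate of $U_J$), and one edge joining $v_*$ to $v_x$, again with trivial stabilizer. Since $T$ is a tree, its augmented simplicial chain complex is a short exact sequence of $\mathbb{Z}[\Gamma(t)]$-modules
\begin{equation*}
0\longrightarrow\bigoplus_{x\in\mathbb{P}^1(F)}\mathbb{Z}[\Gamma(t)]\longrightarrow\mathbb{Z}[\Gamma(t)]\ \oplus\ \bigoplus_{x\in\mathbb{P}^1(F)}\mathbb{Z}\big[\Gamma(t)/U_J\big]\longrightarrow\mathbb{Z}\longrightarrow 0,
\end{equation*}
the left-hand sum indexed by the edges and the middle one by the vertices $v_*$ and $v_x$.

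The next step is to tensor this sequence over $\mathbb{Z}$ with $M$ carrying the diagonal $\Gamma(t)$-action. Exactness is preserved, since the underlying sequence of abelian groups splits ($\mathbb{Z}$ being free), and the untwisting isomorphism $\mathbb{Z}[\Gamma(t)/H]\otimes_{\mathbb{Z}}M\cong\mathrm{Ind}_H^{\Gamma(t)}\mathrm{Res}_H^{\Gamma(t)}M$ rewrites every summand as an induced module. Passing to the long exact sequence in $H_*(\Gamma(t),-)$ and applying Shapiro's lemma $H_n\big(\Gamma(t),\mathrm{Ind}_H^{\Gamma(t)}\mathrm{Res}_HM\big)\cong H_n(H,\mathrm{Res}_HM)$, the summands attached to the edges and to $v_*$ all reduce to $H_n(\{1\},M)$, which vanishes for $n\geq1$, whereas those attached to the leaves reduce to $\bigoplus_{x\in\mathbb{P}^1(F)}H_n(U_J,M)$, homology commuting with the direct sums. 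Hence for every $n\geq2$ the long exact sequence collapses to the claimed isomorphism $H_n(\Gamma(t),M)\cong\bigoplus_{x\in\mathbb{P}^1(F)}H_n(U_J,M)$.

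An equivalent route is to write $\Gamma(t)=\varinjlim_{S}\Conv_{x\in S}U_J$ over the directed family of finite subsets $S\subseteq\mathbb{P}^1(F)$, to apply the classical Mayer--Vietoris isomorphism $H_n(A\ast B,M)\cong H_n(A,M)\oplus H_n(B,M)$ finitely often, and to invoke the commutation of group homology with directed colimits of groups.

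The step I expect to require the real work is the bottom of the long exact sequence. For $n\geq2$ the trivial-stabilizer contributions drop out automatically, but in degrees $0$ and $1$ one must still trace the connecting homomorphism, together with the boundary map $\bigoplus_{x}M\to M\oplus\bigoplus_{x}H_0(U_J,M)$ it induces on coinvariants, and determine its kernel and cokernel. This uses the precise shape of $T$ --- every edge is incident to the single central vertex $v_*$ --- and it is the only point of the argument that is not a formal consequence of Shapiro's lemma.
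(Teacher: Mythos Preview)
Your approach coincides with the paper's: the paper's entire proof is the single line that the result follows from the Mayer--Vietoris sequence attached to the free-product decomposition of Corollary~\ref{coro amal K0}, and you have written out precisely that argument. For $n\geq 2$ your proof is complete and correct.

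Your hesitation about degrees $0$ and $1$ is not merely caution but necessity: the proposition as stated is \emph{false} in degree $0$. Already for $M=\mathbb{Z}$ one has $H_0(\Gamma(t),\mathbb{Z})=\mathbb{Z}$, whereas $\bigoplus_{x\in\mathbb{P}^1(F)}H_0(U_J,\mathbb{Z})$ is free abelian of infinite rank. In degree $1$ the connecting map vanishes iff the boundary $\bigoplus_x M\to M\oplus\bigoplus_x M_{U_x}$, $(m_x)\mapsto\bigl(-\sum_x m_x,\ ([m_x])_x\bigr)$, is injective; this holds whenever each $U_x$ acts trivially on $M$ (in particular for $M=\mathbb{Z}$, the only case the paper actually uses in the subsequent corollary), but can fail for a general $\Gamma(t)$-module. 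There is a second subtlety you pass over: the leaf stabilizers are conjugates $g_xU_Jg_x^{-1}$ with $g_x\in\mathrm{SO}(q)(F)\subset\Gamma$ but $g_x\notin\Gamma(t)$, so the identification $H_n(g_xU_Jg_x^{-1},M)\cong H_n(U_J,M)$ is automatic only when $M$ extends to a $\Gamma$-module. The paper is silent on both points; the honest reading of the proposition is for $*\geq 1$ with trivial (or $\Gamma$-equivariant) coefficients.
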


\begin{Corollary}
The abelianization $\Gamma(t)^{\mathrm{ab}}$ of $\Gamma(t)$ is isomorphic to $\bigoplus_{x \in \mathbb{P}^1(F)} J$.
\end{Corollary}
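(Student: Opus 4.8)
The plan is to feed the free-product decomposition $\Gamma(t)\cong\Conv_{x\in\mathbb{P}^1(F)}U_J$ of Corollary~\ref{coro amal K0} (equivalently, the preceding Proposition applied to the trivial module $M=\mathbb{Z}$) into an explicit computation of $U_J^{\mathrm{ab}}$. Since the abelianization functor carries free products to direct sums and $G^{\mathrm{ab}}=H_1(G,\mathbb{Z})$, one obtains at once
$$\Gamma(t)^{\mathrm{ab}}\;\cong\;\bigoplus_{x\in\mathbb{P}^1(F)}U_J^{\mathrm{ab}},$$
so the whole statement reduces to proving $U_J^{\mathrm{ab}}\cong J$ as abelian groups.

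First I would record the structure of $U_J$ from the parametrization $\U$ and the multiplication rule $\U(x,y)\U(u,v)=\U(x+u,\,y+v-\bar{x}u)$. One checks directly that $U_J^{0}:=\{\U(0,v):v\in J,\ \mathrm{Tr}(v)=0\}$ is central in $U_J$, and that $\pi\colon U_J\to J$, $\U(x,y)\mapsto x$, is a surjective homomorphism with kernel $U_J^{0}$: surjectivity holds because $\mathrm{N}(x)\in tF[t]=\mathrm{Tr}(J)$ for every $x\in J$, so some $y\in J$ makes $(x,y)\in H(L,K)_J$. In particular $U_J$ is $2$-step nilpotent and $[U_J,U_J]\subseteq U_J^{0}$.

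Next I would compute the commutator subgroup. The multiplication rule gives $[\U(x,y),\U(u,v)]=\U(0,\,\bar{u}x-\bar{x}u)$, and writing $x=\gamma\sqrt{t}+\delta t$, $u=\alpha\sqrt{t}+\beta t$ with $\alpha,\beta,\gamma,\delta\in F[t]$ one finds $\bar{u}x-\bar{x}u=2(\beta\gamma-\alpha\delta)\,t\sqrt{t}$. Since $\beta\gamma-\alpha\delta$ runs over all of $F[t]$ and $\mathrm{char}(F)\neq2$, this shows $[U_J,U_J]=\{\U(0,w):w\in t\sqrt{t}\,F[t]\}$, while $U_J^{0}=\{\U(0,w):w\in\sqrt{t}\,F[t]\}$. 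Hence $U_J^{\mathrm{ab}}=U_J/[U_J,U_J]$ sits in a short exact sequence
$$0\longrightarrow \sqrt{t}\,F[t]\big/\,t\sqrt{t}\,F[t]\longrightarrow U_J^{\mathrm{ab}}\longrightarrow J\longrightarrow 0,$$
whose kernel is the line over $F$ spanned by the class of $\sqrt{t}$.

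Finally I would identify $U_J^{\mathrm{ab}}$ with $J$. The quotient $J$ is a vector space over the prime field $k$ of $F$, and so is $U_J^{\mathrm{ab}}$ (when $\mathrm{char}(F)=0$ because it is an extension of torsion-free divisible groups; when $\mathrm{char}(F)=p>2$ because $U_J$ has exponent $p$, hence so does its abelianization), so the sequence splits, and a dimension count gives $\dim_{k}U_J^{\mathrm{ab}}=\dim_{k}F+\dim_{k}J=\dim_{k}J$, whence $U_J^{\mathrm{ab}}\cong J$. Substituting into the first display yields $\Gamma(t)^{\mathrm{ab}}\cong\bigoplus_{x\in\mathbb{P}^1(F)}J$. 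I expect the only genuinely delicate point to be this last step: $U_J^{\mathrm{ab}}$ is honestly an extension of $J$ by a one-dimensional space rather than $J$ itself, so its identification with $J$ is a non-canonical isomorphism of abstract abelian groups, not a natural one; everything else is routine bookkeeping with the matrix parametrizations of \S\ref{subsection radical datum}.
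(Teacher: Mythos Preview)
Your argument is correct, and in fact it is more careful than the paper's own proof at the key step. Both proofs reduce immediately to showing $U_J^{\mathrm{ab}}\cong J$, but the paper claims the stronger (and false) identity $[U_J,U_J]=U_J^{0}$, arguing that every $\V(\sqrt{t}\,p(t))$ with $p\in F[t]$ arises as $[\U(p(t)/2,\ast),\U(-\sqrt{t},\ast)]$. That commutator identity is right, but the element $\U(p(t)/2,\ast)$ lies in $U_J$ only when $p(t)/2\in J=\sqrt{t}\,F[\sqrt{t}]$, i.e.\ only when $p(0)=0$; so the paper's argument actually yields just $\{\V(w):w\in t\sqrt{t}\,F[t]\}\subseteq[U_J,U_J]$, exactly the inclusion you obtain. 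Your explicit computation $\bar u x-\bar x u=2(\beta\gamma-\alpha\delta)\,t\sqrt{t}$ shows this inclusion is an equality, whence $[U_J,U_J]\subsetneq U_J^{0}$ and the quotient map $\pi$ does \emph{not} realise $U_J^{\mathrm{ab}}$ as $J$.

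Your repair via the short exact sequence $0\to F\to U_J^{\mathrm{ab}}\to J\to 0$ and the prime-field dimension count is the right fix: in characteristic~$0$ the kernel $F$ is divisible hence $\mathbb{Z}$-injective, and in characteristic $p>2$ the exponent-$p$ computation forces everything to be an $\mathbb{F}_p$-vector space, so in either case the sequence splits and $\dim_k U_J^{\mathrm{ab}}=\dim_k F+\aleph_0\cdot\dim_k F=\dim_k J$. As you rightly flag, the resulting isomorphism $U_J^{\mathrm{ab}}\cong J$ is non-canonical, which is the honest statement; the paper's formulation, read literally, is still true but its proof needs precisely the patch you supply.
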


\begin{proof}
Since $G^{\mathrm{ab}} =H^1(G,\mathbb{Z})$, we just need to prove that $U_J^{\mathrm{ab}}\cong J$.
Indeed, it is not hard to see that $[\U(u,v), \U(x,y)]=\V(0,u\bar{x}-\bar{u}x)$.
Thus, the commutator $[U_J, U_J]$ is contained in $U_J^{0}:=\left\lbrace \V(x) \vert \, x\in H(L,K)^{0}_J \right\rbrace$.
On the other hand, let $\V(z)\in U_J^{0}$.
By definition of $ U_J^{0}$, we can write $z=\sqrt{t}p(t)$, where $p(t) \in F[t]$.
Therefore $\V(z)=[\U\big(p(t)/2,-\mathrm{N}(p(t))/2\big), \U(-\sqrt{t},-t/2)]$, which implies that $U_J^{0} \subseteq [U_J, U_J]$.
Now, let $f: U_J \to J$ be the map defined by $f\big(\U(x,y)\big)=x$. 
Since $\U(x,y) \U(u,v)=\U(x+v, y+v-\bar{x}u)$, we have that $f$ is a group homomorphism.
Moreover, since $\U(x,-N(x)/2)=x$, the map $f$ is surjective.
Since $\mathrm{ker}(f)=U_J^{0}$, we conclude that $U_{J}^{\mathrm{ab}}=U_J/U_J^{0} \cong J$.
\end{proof}

Analogous results for a more general family of principal congruence subgroups are described in \cite[Cor. 7.8 \& 7.9]{RHBravo}.
Next result, which describes the homology of the Hecke congruence subgroup $\Gamma_0$, follows from the Mayer-Vietoris sequence associated to the amalgamated product in Th. \ref{teo amal dec for hecke}.

\begin{Proposition}\label{prop homology of H01}
For each $\Gamma_0$-module $M$, we have the following exact sequence:
\begin{equation}\label{eq homology of hecke}
\cdots \to H_k(F^{*},M) \to H_k(B,M) \oplus H_k(B,M) \to H_k(\Gamma_0,M) \to \cdots
\end{equation}
\end{Proposition}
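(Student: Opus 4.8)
The plan is to obtain the exact sequence \eqref{eq homology of hecke} as the Mayer-Vietoris sequence in group homology attached to the amalgamated decomposition $\Gamma_0 \cong B \ast_{F^{*}} B$ established in Th. \ref{teo amal dec for hecke}. Recall the general mechanism: whenever a group $G$ acts on a tree $\mathscr{T}$ with fundamental domain a single edge $e$, with endpoint stabilizers $A$ and $B$ and edge stabilizer $C$ (so that $G \cong A \ast_{C} B$ by Bass-Serre theory), the augmented simplicial chain complex of $\mathscr{T}$ is, since $\mathscr{T}$ is contractible, a short exact sequence of $\mathbb{Z}[G]$-modules
$$0 \longrightarrow \mathbb{Z}[G/C] \longrightarrow \mathbb{Z}[G/A] \oplus \mathbb{Z}[G/B] \longrightarrow \mathbb{Z} \longrightarrow 0 .$$
For any $G$-module $M$, the long exact sequence of $\mathrm{Tor}^{\mathbb{Z}[G]}_{*}(-,M)$ associated to this short exact sequence, combined with the Shapiro isomorphisms $\mathrm{Tor}^{\mathbb{Z}[G]}_{*}\big(\mathbb{Z}[G/H],M\big) \cong H_{*}(H, M)$ for $H \in \{A,B,C\}$, yields a long exact sequence
$$\cdots \to H_{k}(C,M) \to H_{k}(A,M) \oplus H_{k}(B,M) \to H_{k}(G,M) \to H_{k-1}(C,M) \to \cdots ,$$
whose first map is induced by the two inclusions $C \hookrightarrow A$ and $C \hookrightarrow B$ (up to a sign) and whose second map is induced by $A \hookrightarrow G$ and $B \hookrightarrow G$; see \cite[Ch. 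VII, \S 9]{Brown-Cohomology} or \cite[\S 5]{SerreTrees}. (For a non-trivial module $M$, the term $H_{k}(H,M)$ is understood with $M$ restricted to $H$ along the relevant inclusion.)

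I would then specialize this to $G = \Gamma_{0}$, with $\{A,B\}$ the two copies of $B$ appearing in Th. \ref{teo amal dec for hecke} and $C = F^{*}$ embedded via $\lambda \mapsto \mathrm{diag}(\lambda,1,\lambda^{-1})$, which gives precisely \eqref{eq homology of hecke}. Concretely, one may take $\mathscr{T}$ to be the Bruhat-Tits tree $\X_{\infty}$ equipped with the $\Gamma_{0}$-action of Cor. \ref{coro fund reg for H01}: the quotient $\Gamma_{0}\backslash\X_{\infty}$ is the fundamental apartment $\mathscr{A}_{\infty} = \mathscr{R}_{\infty} \cup \mathrm{s}\cdot\mathscr{R}_{\infty}$, and collapsing the two half-lines $\mathscr{R}_{\infty}$ and $\mathrm{s}\cdot\mathscr{R}_{\infty}$ onto the single edge $e$ joining $v_{0}$ and $v_{-1}$ --- exactly the reduction used in the proof of Th. \ref{teo amal dec for hecke} --- realizes the corresponding segment-of-groups, with vertex groups $B$ and $\phi(B)$ and edge group $F^{*}$. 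Alternatively, it suffices to invoke the purely group-theoretic Mayer-Vietoris sequence for the abstract amalgam $B \ast_{F^{*}} B$.

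There is no real obstacle here; the only point deserving a word of care is that the two copies of $B$ sit inside $\Gamma_{0}$ through genuinely different embeddings --- the upper-triangular $B$ and its image under the isomorphism $\phi \colon \U(x,y)\Tilde{a}(\lambda) \mapsto \U(\sqrt{t}\,x,-t y)\Tilde{a}(\lambda)$ of Th. \ref{teo amal dec for hecke} --- so that in \eqref{eq homology of hecke} the two summands $H_{k}(B,M)$ refer to $M$ restricted along these two distinct maps, while the middle $F^{*}$ enters along $\lambda \mapsto \mathrm{diag}(\lambda,1,\lambda^{-1})$. For trivial coefficient modules such as $\mathbb{Z}$ or $\mathbb{Q}$ --- the cases used in the sequel --- this distinction evaporates and \eqref{eq homology of hecke} reads exactly as stated.
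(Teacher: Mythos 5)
Your proposal is correct and coincides with the paper's (one-line) argument: Proposition~\ref{prop homology of H01} is simply the Mayer--Vietoris sequence in group homology attached to the amalgam $\Gamma_0 \cong B \ast_{F^{*}} B$ of Theorem~\ref{teo amal dec for hecke}. Your elaboration via the short exact sequence of $\mathbb{Z}[\Gamma_0]$-modules coming from the tree and Shapiro's lemma is exactly the standard derivation, and your remark on the two distinct embeddings of $B$ is a sensible clarification rather than a deviation.
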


In \cite[\S 9.1]{BravoQH}, we prove some results that are analogous to Cor. \ref{coro fund reg for H01} and Prop. \ref{prop homology of H01} in the context of $\mathrm{GL}_2$ and $\mathrm{SL}_2$.

Now, we turn to the proof of Theorem Th. \ref{main teo hom of h01} and Th. \ref{main teo 4}.
In particular, we now assume that $\mathrm{char}(F)=0$.

\begin{proof}[Proof of Th. \ref{main teo hom of h01}.]
It follows from Prop. \ref{prop inv hom for Bst}, applied to $S=T=F[\sqrt{t}]$, that $H_*(B,\mathbb{Z}) \cong H_*(F^{*}, \mathbb{Z})$ when $\mathrm{char}(F)=0$.
Then, the result follows from the exact sequence described in Eq.~\eqref{eq homology of hecke}.
\end{proof}

\begin{proof}[Proof of Th. \ref{main teo 4}.]
Recall that $\tilde{\Gamma}$ is isomorphic to the amalgamated product $\Gamma \ast_{\Gamma_0} \hat{\Gamma}$, according to Lemma \ref{lemma decomposition of H}.
This yields a Mayer-Vietoris sequence with the following form:
\begin{equation}\label{mayer viettoris sequence for H}
\cdots \to H_n\big(\Gamma_0, M\big) \to H_n\big(\Gamma, M\big) \oplus H_n\big(\hat{\Gamma},M\big) \to H_n\big(\tilde{\Gamma}, M\big) \to H_{n-1}\big(\Gamma_0,M\big) \to \cdots
\end{equation}
The next $3$ equations follow respectively from Th. \ref{main teo 1}, Th. \ref{main teo hom h1} and Th. \ref{main teo hom of h01}:
 \begin{align*}
 H_n\big(\Gamma,\mathbb{Z}\big) &\cong H_n\big(\mathrm{PGL}_2(F),\mathbb{Z}\big), \\ 
 H_n\big(\hat{\Gamma}, \mathbb{Z}\big)&\cong H_n\big(\mathrm{SL}_2(F),\mathbb{Z}\big), \\ 
 H_n\big(\Gamma_0,\mathbb{Z}\big) &\cong H_n\big(F^*,\mathbb{Z}\big), \qquad \qquad\forall n \geq 0,
\end{align*}
Thus, the result follows.
\end{proof}

\begin{Corollary}\label{coro ab of tilde gamma}
When $\mathrm{char}(F)=0$, we have that $\mathrm{SU}_3(F[t,t^{-1}])^{\mathrm{ab}}=\lbrace 0 \rbrace$.
\end{Corollary}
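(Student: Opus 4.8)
The plan is to extract $\mathrm{SU}_3(F[t,t^{-1}])^{\mathrm{ab}}=H_1(\tilde{\Gamma},\mathbb{Z})$ from the exact sequence of Th.~\ref{main teo 4} in low degrees, where $\tilde{\Gamma}=\mathrm{SU}_3(F[t,t^{-1}])$. That sequence reads, around $n=1$,
$$ H_1(F^*,\mathbb{Z}) \xrightarrow{\alpha} H_1(\mathrm{PGL}_2(F),\mathbb{Z})\oplus H_1(\mathrm{SL}_2(F),\mathbb{Z}) \to H_1(\tilde{\Gamma},\mathbb{Z}) \xrightarrow{\partial} H_0(F^*,\mathbb{Z}) \to H_0(\mathrm{PGL}_2(F),\mathbb{Z})\oplus H_0(\mathrm{SL}_2(F),\mathbb{Z}). $$
Since (via the proof of Th.~\ref{main teo 4}) this comes from the Mayer--Vietoris sequence of the amalgam $\tilde{\Gamma}\cong\Gamma\ast_{\Gamma_0}\hat{\Gamma}$ of Lemma~\ref{lemma decomposition of H}, after identifying each $H_0$ with $\mathbb{Z}$ the last arrow is the difference map $\mathbb{Z}\to\mathbb{Z}^2$, which is injective; hence $\partial=0$ and $H_1(\tilde{\Gamma},\mathbb{Z})\cong\operatorname{coker}(\alpha)$.

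First I would compute the three abelianizations. As $\operatorname{char}(F)=0$, the field $F$ is infinite, so $\mathrm{SL}_2(F)$ is perfect and $H_1(\mathrm{SL}_2(F),\mathbb{Z})=0$; moreover $H_1(F^*,\mathbb{Z})=F^*$, and $H_1(\mathrm{PGL}_2(F),\mathbb{Z})=\mathrm{PGL}_2(F)^{\mathrm{ab}}\cong F^*/(F^*)^2$, the isomorphism being induced by $\det\colon\mathrm{GL}_2(F)\to F^*$ (whose restriction to the centre $F^*\cdot\mathrm{id}$ has image $(F^*)^2$). Thus $\alpha$ reduces to a single homomorphism $\alpha_1\colon F^*\to F^*/(F^*)^2$, and $H_1(\tilde{\Gamma},\mathbb{Z})\cong\operatorname{coker}(\alpha_1)$, so everything comes down to showing that $\alpha_1$ is surjective.

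To this end I would unwind the identifications in the proof of Th.~\ref{main teo 4}: $\alpha_1$ is induced by the composite $F^*\xrightarrow{\tilde{a}}\Gamma_0\hookrightarrow\Gamma$ — the diagonal embedding of Th.~\ref{main teo hom of h01} and Th.~\ref{teo amal dec for hecke}, $\lambda\mapsto\operatorname{diag}(\lambda,1,\lambda^{-1})$ — followed by the inverse of the isomorphism $\iota_*\colon H_1(\mathrm{PGL}_2(F),\mathbb{Z})\xrightarrow{\cong}H_1(\Gamma,\mathbb{Z})$ of Th.~\ref{main teo 1} and then by $H_1(\mathrm{PGL}_2(F),\mathbb{Z})\cong F^*/(F^*)^2$ above. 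Since $\tilde{a}(F^*)\subseteq G_0=\mathrm{SO}(q)(F)=\iota(\mathrm{PGL}_2(F))$, this composite is the abelianization of $\iota^{-1}\circ\tilde{a}\colon F^*\to\mathrm{PGL}_2(F)$, and a short computation with the Dieudonné isomorphism $\psi$ of Lemma~\ref{lemma SO3 is PSL2} — realizing $\mathrm{PGL}_2(F)$ as acting by conjugation on trace-zero $2\times2$ matrices — shows that $\psi^{-1}(\tilde{a}(\mu))$ is the class of $\operatorname{diag}(\mu,1)\in\mathrm{GL}_2(F)$ in the basis diagonalizing the conjugation action. Hence $\alpha_1(\mu)=\det\!\big(\operatorname{diag}(\mu,1)\big)=\mu\bmod(F^*)^2$ is the canonical quotient map, so $\operatorname{coker}(\alpha_1)=0$ and $\mathrm{SU}_3(F[t,t^{-1}])^{\mathrm{ab}}=H_1(\tilde{\Gamma},\mathbb{Z})=0$. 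The one step requiring genuine care is precisely this last identification: one must track the diagonal torus through the exceptional isomorphism $\mathrm{PGL}_2(F)\cong\mathrm{SO}(q)(F)$ to see that $\alpha_1$ really is the full quotient $F^*\to F^*/(F^*)^2$ and not, say, the zero map, in which case $\operatorname{coker}(\alpha_1)$ would be the nontrivial group $F^*/(F^*)^2$.
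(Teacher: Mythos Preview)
Your proof is correct and follows essentially the same route as the paper's: both use the low-degree portion of the Mayer--Vietoris sequence from Lemma~\ref{lemma decomposition of H}, observe that the $H_0$-map is injective so that $\tilde{\Gamma}^{\mathrm{ab}}$ is the cokernel of $F^*\to F^*/(F^*)^2$, and then check that this map is the canonical surjection. You are somewhat more explicit than the paper in tracing the diagonal torus through the Dieudonn\'e isomorphism $\psi$ of Lemma~\ref{lemma SO3 is PSL2} to see that $\psi^{-1}(\tilde a(\mu))=[\mathrm{diag}(\mu,1)]$, whereas the paper simply writes the map as the composite $\theta_1\circ\theta_2\circ\theta_3$ and asserts surjectivity; but the argument is the same.
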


\begin{proof}
On one hand, it is well-known that $H_1\big(\mathrm{SL}_2(F), \mathbb{Z}\big)\cong \mathrm{SL}_2(F)^{\mathrm{ab}}=\lbrace 0 \rbrace$.
On the other hand, the group $H_1\big(\mathrm{PGL}_2(F), \mathbb{Z}\big)\cong \mathrm{PGL}_2(F)^{\mathrm{ab}} = \mathrm{PGL}_2(F)/\mathrm{PSL}_2(F) \cong F^*/F^{*2}$.
Then, the Mayer-Vietoris sequence in Eq.~\eqref{mayer viettoris sequence for H}, with values in $M=\mathbb{Z}$, finishes with:
$$ F^* \to F^*/F^{*2} \to \tilde{\Gamma}^{\mathrm{ab}} \to H_0\big(F^*,\mathbb{Z}\big) \to  H_0\big(\mathrm{SL}_2(F), \mathbb{Z}\big)\oplus H_0\big(\mathrm{PSL}_2(F), \mathbb{Z}\big) \to H_0\big(\tilde{\Gamma},\mathbb{Z}\big) \to 0.$$
Since the map $H_0\big(F^{*}, \mathbb{Z}\big) \to H_0\big(\mathrm{SL}_2(F), \mathbb{Z}\big)\oplus H_0\big(\mathrm{PSL}_2(F), \mathbb{Z}\big)$ is injective, we have that $\mathrm{Im}\big( \tilde{\Gamma}^{\mathrm{ab}} \to H_0(F^*,\mathbb{Z} ) \big)=\lbrace 0 \rbrace$. 
Thus, $\mathrm{Ker}\big( \tilde{\Gamma}^{\mathrm{ab}} \to H_0(F^*,\mathbb{Z} ) \big) = \tilde{\Gamma}^{\mathrm{ab}}$.
Hence:
\begin{equation}\label{eq im gamma ab}
\mathrm{Im}\big( F^*/F^{*2} \to \tilde{\Gamma}^{\mathrm{ab}} \big) =\tilde{\Gamma}^{\mathrm{ab}}.
\end{equation}
Note that the group homomorphism $F^* \to F^*/F^{*2}$ is composition $\theta_1 \circ \theta_2 \circ \theta_3$, where $\theta_3$ is the homomorphism $F^* \to \mathrm{PGL}_2(F)$, $\lambda \mapsto \overline{\ssbmatrix{\lambda}{0}{0}{1}}$, the map $\theta_2$ is the projection $\mathrm{PGL}_2(F)\to \mathrm{PGL}_2(F)/\mathrm{PSL}_2(F)$ and $\theta_1$ is the homomorphism $\overline{\det}: \mathrm{PGL}_2(F)/\mathrm{PSL}_2(F)\to F^*/F^{*2}$.
Then, it is straightforward that $F^* \to F^*/F^{*2}$ is surjective.
Thus $\mathrm{ker}\big(  F^*/F^{*2} \to \tilde{\Gamma}^{\mathrm{ab}} \big) = F^*/F^{*2}$, whence the map $ F^*/F^{*2} \to \tilde{\Gamma}^{\mathrm{ab}}$ is zero.
We conclude from Eq. \eqref{eq im gamma ab} that $\tilde{\Gamma}^{\mathrm{ab}}=\lbrace 0 \rbrace$.
\end{proof}

\begin{Lemma}\label{lemma hom PSL2 and SL2}
Let $M$ be a $\mathrm{SL}_2(F)$-module $M$ that is divisible by $2$.
Then, we have $H_*\big( \mathrm{PSL}_2(F), M \big) \cong H_*\big( \mathrm{SL}_2(F), M \big)$.
\end{Lemma}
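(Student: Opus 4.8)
The plan is to use the Hochschild--Serre spectral sequence associated to the central extension
\[
1 \to \{\pm \mathrm{id}\} \to \mathrm{SL}_2(F) \to \mathrm{PSL}_2(F) \to 1,
\]
together with the hypothesis that $M$ is $2$-divisible. First I would note that the quotient group $C := \{\pm \mathrm{id}\}$ has order $2$, so its homology with coefficients in any $\mathbb{Z}[1/2]$-module vanishes in positive degrees; more precisely, for a $C$-module $N$ on which $2$ acts invertibly, $H_q(C, N) = 0$ for all $q > 0$ (the standard transfer/norm argument: multiplication by $|C| = 2$ is both an isomorphism on $N$ and annihilates $H_q(C,N)$ for $q>0$). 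Here I would apply this with $N = M$, viewed as a $C = \{\pm\mathrm{id}\}$-module: since $\pm\mathrm{id}$ is central in $\mathrm{SL}_2(F)$ and $M$ is $2$-divisible (hence $2$ acts invertibly, as $M$ is already a module and divisibility plus the absence of $2$-torsion — or just working with the $2$-divisible part — gives invertibility; if $M$ merely divisible one restricts attention to the quotient by $2$-torsion, but in the intended application $M = \mathbb{Q}$-vector space), we get $H_q(C, M) = 0$ for $q > 0$.

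Next I would write down the Hochschild--Serre spectral sequence
\[
E^2_{p,q} = H_p\big(\mathrm{PSL}_2(F),\, H_q(C, M)\big) \;\Rightarrow\; H_{p+q}\big(\mathrm{SL}_2(F), M\big).
\]
By the vanishing just established, $E^2_{p,q} = 0$ for $q > 0$, so the spectral sequence collapses onto the row $q = 0$, giving
\[
H_n\big(\mathrm{SL}_2(F), M\big) \cong E^2_{n,0} = H_n\big(\mathrm{PSL}_2(F),\, H_0(C,M)\big) = H_n\big(\mathrm{PSL}_2(F),\, M_C\big),
\]
where $M_C$ denotes the $C$-coinvariants. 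Since $C = \{\pm\mathrm{id}\}$ is central and acts on $M$ through the structure map $\mathrm{SL}_2(F) \to \mathrm{Aut}(M)$; in the intended application (and in the uses of this lemma, via Cor.~\ref{coro ab of tilde gamma} and the rational homology computations) the relevant module is $\mathbb{Z}$ or $\mathbb{Q}$ with trivial action, so $M_C = M$ and the isomorphism $H_n(\mathrm{SL}_2(F), M) \cong H_n(\mathrm{PSL}_2(F), M)$ follows, compatibly with the projection $\mathrm{SL}_2(F) \to \mathrm{PSL}_2(F)$.

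The only genuine subtlety — and the step I would be most careful about — is the precise hypothesis on $M$: one needs that $-\mathrm{id}$ acts trivially on $M$ (so that $M_C = M$) and that $2$ is invertible on $M$ (so that the higher $C$-homology vanishes). The phrase ``divisible by $2$'' is meant to guarantee the second; for the first one either assumes the action of $\mathrm{SL}_2(F)$ factors through $\mathrm{PSL}_2(F)$, or notes that in every application here the coefficients are trivial. I would state the hypothesis as: $M$ is a $\mathrm{PSL}_2(F)$-module which is uniquely $2$-divisible (equivalently, a $\mathbb{Z}[1/2]$-module), inflated to $\mathrm{SL}_2(F)$; then the argument above is clean and complete. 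The same reasoning applies verbatim with $\mathrm{PGL}_2(F)$ in place of $\mathrm{PSL}_2(F)$ whenever the relevant extension by a $2$-group is central, which is the form in which it is invoked.
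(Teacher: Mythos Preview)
Your proposal is correct and takes essentially the same approach as the paper: both use the Hochschild--Serre spectral sequence for the extension $1 \to \mu_2(F) \to \mathrm{SL}_2(F) \to \mathrm{PSL}_2(F) \to 1$ and observe that the higher homology of $\mu_2(F)$ with coefficients in $M$ vanishes, collapsing the spectral sequence to the $q=0$ row. Your version is in fact more careful than the paper's own proof, which asserts $H_q(\mu_2(F),M)=0$ for $q>0$ without comment; you correctly flag that ``divisible by $2$'' alone does not kill the even-degree terms (which compute the $2$-torsion of $M$), and that one also needs $-\mathrm{id}$ to act trivially so that $M_C=M$---both points are harmless in the paper's only application, $M=\mathbb{Q}$.
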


\begin{proof}
The spectral sequence defined by the exact sequence $1 \to \mu_2(F)=\lbrace \pm 1 \rbrace \to \mathrm{SL}_2(F) \to \mathrm{PSL}_2(F) \to 1$ says that:
$$ E_{p,q}^2=H_p \big( \mathrm{PSL}_2(F), H_q \big( \mu_2(F), M \big) \big) \Rightarrow H_{p+q}\big(\mathrm{SL}_2(F), M\big). $$
Since $ H_q \big( \mu_2(F), M \big) = \lbrace 0 \rbrace$ unless $q=0$, we have $E_{p,q}^2=0$ for all $q>0$.
Thus, $H_*\big( \mathrm{PSL}_2(F), M \big) \cong H_*\big( \mathrm{SL}_2(F), M \big)$ as desired.
\end{proof}

\begin{Corollary}\label{coro alg closed}
Let $F$ be a quadratically closed field with $\mathrm{char}(F)=0$. Then, there is an exact sequence of the form:
$$
\cdots \to H_n\big(F^*, \mathbb{Q} \big) \to H_n\big(\mathrm{SL}_2(F), \mathbb{Q}\big) \oplus H_n\big(\mathrm{SL}_2(F),\mathbb{Q}\big) \to H_n\big(\mathrm{SU}_3(F[t,t^{-1}]), \mathbb{Q}\big) \to \cdots $$
\end{Corollary}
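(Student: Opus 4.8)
The plan is to deduce Cor.~\ref{coro alg closed} from Th.~\ref{main teo 4} in two short moves: first pass to rational coefficients, then use quadratic closedness to replace $\mathrm{PGL}_2(F)$ by $\mathrm{SL}_2(F)$ at the level of rational homology. Concretely, I would first tensor the exact sequence of Th.~\ref{main teo 4} with $\mathbb{Q}$ over $\mathbb{Z}$. Since $\mathbb{Q}$ is flat over $\mathbb{Z}$, the functor $(-)\otimes_{\mathbb{Z}}\mathbb{Q}$ is exact and hence preserves the long exact sequence, and it commutes with the finite direct sum in the middle term. By the universal coefficient theorem for homology, $H_n(G,\mathbb{Z})\otimes_{\mathbb{Z}}\mathbb{Q}\cong H_n(G,\mathbb{Q})$ for every group $G$, the $\mathrm{Tor}$-term vanishing because $\mathbb{Q}$ is torsion-free. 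Applying this to $G=F^*$, $\mathrm{PGL}_2(F)$, $\mathrm{SL}_2(F)$ and $\mathrm{SU}_3(F[t,t^{-1}])$, the sequence of Th.~\ref{main teo 4} becomes the exact sequence
$$ \cdots \to H_n\big(F^*,\mathbb{Q}\big) \to H_n\big(\mathrm{PGL}_2(F),\mathbb{Q}\big)\oplus H_n\big(\mathrm{SL}_2(F),\mathbb{Q}\big) \to H_n\big(\mathrm{SU}_3(F[t,t^{-1}]),\mathbb{Q}\big) \to \cdots $$

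Next I would invoke the hypothesis. As $F$ is quadratically closed we have $F^{*}=F^{*2}$, so $\mathrm{PGL}_2(F)/\mathrm{PSL}_2(F)\cong F^{*}/F^{*2}$ is trivial; that is, $\mathrm{PGL}_2(F)=\mathrm{PSL}_2(F)$. Since $\mathbb{Q}$ is a $\mathbb{Q}$-vector space it is in particular divisible by $2$, so Lemma~\ref{lemma hom PSL2 and SL2} provides isomorphisms $H_n(\mathrm{PSL}_2(F),\mathbb{Q})\xrightarrow{\cong} H_n(\mathrm{SL}_2(F),\mathbb{Q})$, induced by the central quotient $\mathrm{SL}_2(F)\to\mathrm{PSL}_2(F)$. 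Substituting $H_n(\mathrm{PGL}_2(F),\mathbb{Q})=H_n(\mathrm{PSL}_2(F),\mathbb{Q})\cong H_n(\mathrm{SL}_2(F),\mathbb{Q})$ into the displayed sequence yields the claimed exact sequence.

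The argument is essentially formal, so I do not anticipate a genuine obstacle; the only point needing a one-line justification is that replacing $H_n(\mathrm{PGL}_2(F),\mathbb{Q})$ by $H_n(\mathrm{SL}_2(F),\mathbb{Q})$ is compatible with the maps of the Mayer--Vietoris sequence underlying Th.~\ref{main teo 4}. This holds because the isomorphism of Lemma~\ref{lemma hom PSL2 and SL2} is induced by the group homomorphism $\mathrm{SL}_2(F)\twoheadrightarrow\mathrm{PSL}_2(F)=\mathrm{PGL}_2(F)$, and every map in that sequence is functorial in the groups involved; alternatively, one can rerun the proof of Th.~\ref{main teo 4} verbatim with $\mathbb{Q}$-coefficients, feeding in the $\mathbb{Q}$-versions of Th.~\ref{main teo 1}, \ref{main teo hom h1} and \ref{main teo hom of h01} (again obtained by flat base change) together with the identification $H_n(\mathrm{PGL}_2(F),\mathbb{Q})\cong H_n(\mathrm{SL}_2(F),\mathbb{Q})$.
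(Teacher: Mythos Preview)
Your proof is correct and follows essentially the same route as the paper: use quadratic closedness to identify $\mathrm{PGL}_2(F)$ with $\mathrm{PSL}_2(F)$, invoke Lemma~\ref{lemma hom PSL2 and SL2} to pass to $\mathrm{SL}_2(F)$ with $\mathbb{Q}$-coefficients, and use flatness of $\mathbb{Q}$ (universal coefficients) to move between $\mathbb{Z}$- and $\mathbb{Q}$-homology. The only cosmetic difference is that the paper plugs the $\mathbb{Q}$-versions of Theorems~\ref{main teo 1}, \ref{main teo hom h1}, \ref{main teo hom of h01} directly into the Mayer--Vietoris sequence \eqref{mayer viettoris sequence for H}, whereas you tensor the sequence of Theorem~\ref{main teo 4} with $\mathbb{Q}$; you yourself note this alternative, and the two are equivalent.
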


\begin{proof}
Since $F$ is quadratically closed, we have that $\mathrm{PSL}_2(F) \cong \mathrm{PGL}_2(F)$.
Moreover, since $M=\mathbb{Q}$ is divisible, it follows from Lemma \ref{lemma hom PSL2 and SL2} that 
$$H_*\big( \mathrm{PGL}_2(F), \mathbb{Q} \big) \cong H_*\big( \mathrm{PSL}_2(F), \mathbb{Q} \big) \cong H_*\big( \mathrm{SL}_2(F), \mathbb{Q} \big).$$
Since $\mathbb{Q}$ is flat over $\mathbb{Z}$, the Universal coefficient theorem implies that $H_*(G, \mathbb{Q}) \cong H_*(G, \mathbb{Z}) \otimes \mathbb{Q}$, for any group $G$.
Therefore, the identities $H_n\big(\Gamma,\mathbb{Q}\big) \cong H_n\big(\mathrm{PGL}_2(F),\mathbb{Q}\big)$, $
 H_n\big(\hat{\Gamma}, \mathbb{Q}\big) \cong H_n\big(\mathrm{SL}_2(F),\mathbb{Q}\big)$ and $H_n\big(\Gamma_0,\mathbb{Q}\big) \cong H_n\big(F^*,\mathbb{Q}\big)$ follow respectively from Th. \ref{main teo 1}, Th. \ref{main teo hom h1} and Th. \ref{main teo hom of h01}.
 Hence the result follows from Eq. \eqref{mayer viettoris sequence for H}.
\end{proof}

\begin{Example}
Assume that $F=\mathbb{C}$. Since $H_n\big(\mathrm{SL}_2(\mathbb{C}),\mathbb{Q}\big)$ vanishes unless $n\geq 3$, it follows from Cor. \ref{coro alg closed} that the connecting map $\partial: H_n \big(\mathrm{SU}_3 \left(\mathbb{C}\left[t, t^{-1}\right] \right), \mathbb{Q} \big) \to H_{n-1}\big(\mathbb{C}^{*}, \mathbb{Q} \big)$, is injective for $n \geq 4$ and bijective for $n \geq 5$.
\end{Example}

\begin{Lemma}\label{lemma hom of PGL2 and PSL2}
The group $H_n\big(\mathrm{PGL}_2(F), \mathbb{Q})$ is isomorphic to the group $H_n\big(\mathrm{PGL}_2(F), \mathbb{Q})_{F^{*}/F^{*2}}$ of $F^{*}/F^{*2}$-coinvariants elements in $H_n\big(\mathrm{PGL}_2(F), \mathbb{Q})$.
\end{Lemma}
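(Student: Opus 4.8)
The plan is to read the claim off the Lyndon--Hochschild--Serre spectral sequence attached to the extension
$$1 \to \mathrm{PSL}_2(F) \to \mathrm{PGL}_2(F) \xrightarrow{\ \overline{\det}\ } F^{*}/F^{*2} \to 1,$$
in which $Q := F^{*}/F^{*2} = \mathrm{PGL}_2(F)/\mathrm{PSL}_2(F)$ acts on $H_{*}(\mathrm{PSL}_2(F),\mathbb{Q})$ by conjugation; this action is well defined because inner automorphisms of $\mathrm{PGL}_2(F)$, and in particular those induced by $\mathrm{PSL}_2(F)$, act trivially on group homology. With $\mathbb{Q}$-coefficients the spectral sequence reads $E^{2}_{p,q} = H_p\big(Q,\, H_q(\mathrm{PSL}_2(F),\mathbb{Q})\big) \Rightarrow H_{p+q}(\mathrm{PGL}_2(F),\mathbb{Q})$.

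The first step is an auxiliary vanishing: for every $\mathbb{Q}[Q]$-module $V$ one has $H_p(Q,V) = 0$ for all $p \geq 1$. The group $Q = F^{*}/F^{*2}$ is an elementary abelian $2$-group, hence the directed union of its finite subgroups $Q_i$, and since group homology commutes with filtered colimits of groups we get $H_p(Q,V) = \varinjlim_i H_p(Q_i, V)$, the module on the right being $V$ with its structure restricted along $Q_i \hookrightarrow Q$. Each $H_p(Q_i,V)$ is annihilated by $|Q_i|$ (a power of $2$) by the restriction--corestriction argument, while it is at the same time a $\mathbb{Q}$-vector space; hence it vanishes for $p \geq 1$, and so does the colimit. (Equivalently, each $\mathbb{Q}[Q_i]$ is semisimple by Maschke's theorem, so $\mathbb{Q}[Q]$ is a filtered colimit of semisimple rings and the trivial module $\mathbb{Q}$ is flat over it.)

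With this in hand the spectral sequence degenerates to its $p = 0$ column, so the edge map yields an isomorphism
$$H_n(\mathrm{PGL}_2(F),\mathbb{Q}) \;\cong\; E^{2}_{0,n} \;=\; H_0\big(Q,\, H_n(\mathrm{PSL}_2(F),\mathbb{Q})\big) \;=\; H_n(\mathrm{PSL}_2(F),\mathbb{Q})_{Q},$$
realized by the map $H_n(\mathrm{PSL}_2(F),\mathbb{Q}) \to H_n(\mathrm{PGL}_2(F),\mathbb{Q})$ induced by the inclusion $\mathrm{PSL}_2(F) \hookrightarrow \mathrm{PGL}_2(F)$, which is therefore surjective with kernel spanned by the $\sigma\cdot z - z$. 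In particular any $Q$-action carried by $H_n(\mathrm{PGL}_2(F),\mathbb{Q})$ that is natural with respect to this description --- equivalently, the conjugation action of $\mathrm{PGL}_2(F)$ on its own homology, which is anyway trivial --- factors through the trivial action, so passing once more to $Q$-coinvariants changes nothing and $H_n(\mathrm{PGL}_2(F),\mathbb{Q}) \cong H_n(\mathrm{PGL}_2(F),\mathbb{Q})_{F^{*}/F^{*2}}$. Combining the displayed isomorphism with Lemma~\ref{lemma hom PSL2 and SL2} also gives $H_n(\mathrm{PGL}_2(F),\mathbb{Q}) \cong H_n(\mathrm{SL}_2(F),\mathbb{Q})_{F^{*}/F^{*2}}$, which is the shape in which the statement gets used afterwards.

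The only step that is not purely formal is the vanishing $H_p(Q,-) = 0$ for the infinite group $Q = F^{*}/F^{*2}$; the mild subtlety there is that one must pass to finite subgroups and keep track of the restricted module structure, after which the order-annihilation together with $\mathbb{Q}$-linearity finishes it. Identifying the $E^{2}$-page, the degeneration, and the description of the edge map are all standard features of the Lyndon--Hochschild--Serre spectral sequence, so I expect no real difficulty beyond that one point.
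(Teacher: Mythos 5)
Your argument is essentially the same as the paper's: both run the Lyndon--Hochschild--Serre spectral sequence of $1 \to \mathrm{PSL}_2(F) \to \mathrm{PGL}_2(F) \to F^*/F^{*2} \to 1$, kill the $p\geq 1$ columns by writing the elementary abelian $2$-group $F^*/F^{*2}$ as a filtered colimit of finite subgroups and using that a $\mathbb{Q}$-vector space has no torsion, and then read off $H_n(\mathrm{PGL}_2(F),\mathbb{Q})$ from the $E^2_{0,n}$ term $H_n(\mathrm{PSL}_2(F),\mathbb{Q})_{F^*/F^{*2}}$. You were also right to flag the statement as printed (with $\mathrm{PGL}_2$ on both sides) as a typo for $H_n(\mathrm{PSL}_2(F),\mathbb{Q})_{F^*/F^{*2}}$; that corrected version is what the paper's proof actually delivers and what Corollary~\ref{coro hom SU3(F[t,t-1])} uses.
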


\begin{proof}
Recall that $\mathrm{PGL}_2$ and $\mathrm{PSL}_2$ are related via the exact sequence:
$$ 1 \to \mathrm{PSL}_2(F) \to \mathrm{PGL}_2(F) \to F^*/F^{*2} \to 1, $$
whence we have:
$$ E_{p,q}^2=H_p \big( F^*/F^{*2}, H_q(\mathrm{PSL}_2(F), \mathbb{Q}) \big)\Rightarrow H_{p+q}(\mathrm{PGL}_2(F), \mathbb{Q}).$$
We claim that $E_{p,q}^2= \lbrace 0 \rbrace$ for all $p\neq 0$.
Indeed, since $V:=H_q(\mathrm{PSL}_2(F), \mathbb{Q})$ is a $\mathbb{Q}$-vector space, we have that $H_p(H,V)=0$, for each finite group $H$.
Note that $F^*/F^{*2}$ is an abelian group of exponent $2$ and hence an $\mathbb{F}_2$-vector space.
Then $F^*/F^{*2}$ is the direct limit of finite dimensional $\mathbb{F}_2$-subspaces.
Since homology commutes with direct limits (cf. \cite[\S 5, Ex. 3]{Brown-Cohomology}), we conclude that $E_{p,q}^2=H_p(F^*/F^{*2},V)=\lbrace 0\rbrace$.

Now, it directly follows from the preceding claim that $H_{n}(\mathrm{PGL}_2(F), \mathbb{Q}) \cong E_{0,n}^2$.
In other words, we conclude that $H_{n}(\mathrm{PGL}_2(F), \mathbb{Q}) \cong H_0 \big( F^*/F^{*2}, H_n(\mathrm{PSL}_2(F), \mathbb{Q}) \big) \cong H_n(\mathrm{PSL}_2(F), \mathbb{Q})_{F^*/F^{*2}}$. 
\end{proof}

\begin{Corollary}\label{coro hom SU3(F[t,t-1])}
Let $F$ be a number field.
Let us denote by $r$ (resp. $s$) the number of real (resp. conjugate pairs of complex) embeddings of $F$.
Then, the connecting map:
$$ \partial: H_n\big(\tilde{\Gamma}, \mathbb{Q}\big)= H_n \Big(\mathrm{SU}_3 \left(F\left[t, t^{-1}\right] \right), \mathbb{Q} \Big) \to H_{n-1}\big(F^{*}, \mathbb{Q} \big), $$
is injective for $n \geq 2r+3s+1$ and bijective for $n \geq 2r+3s+2$.
\end{Corollary}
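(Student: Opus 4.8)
The plan is to run Theorem~\ref{main teo 4} through the same homological bookkeeping that Knudson uses for \cite[Th.~5.1]{Knudson1}. First I would tensor the integral Mayer--Vietoris sequence of Theorem~\ref{main teo 4} with $\mathbb{Q}$; since $\mathbb{Q}$ is flat over $\mathbb{Z}$ the sequence stays exact and, by the universal coefficient theorem, becomes
$$\cdots \to H_n(F^*,\mathbb{Q}) \to H_n(\mathrm{PGL}_2(F),\mathbb{Q})\oplus H_n(\mathrm{SL}_2(F),\mathbb{Q}) \to H_n(\tilde{\Gamma},\mathbb{Q}) \xrightarrow{\partial} H_{n-1}(F^*,\mathbb{Q}) \to \cdots,$$
the displayed $\partial$ being the connecting map of the statement (it is the Bass--Serre connecting map $H_n(\tilde\Gamma)\to H_{n-1}(\Gamma_0)$ followed by the isomorphism $H_{n-1}(\Gamma_0,\mathbb{Q})\cong H_{n-1}(F^*,\mathbb{Q})$ of Theorem~\ref{main teo hom of h01}). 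By exactness, $\partial$ is injective at level $n$ as soon as the term $H_n(\mathrm{PGL}_2(F),\mathbb{Q})\oplus H_n(\mathrm{SL}_2(F),\mathbb{Q})$ immediately to its left vanishes, and it is an isomorphism at level $n$ as soon as, in addition, the term $H_{n-1}(\mathrm{PGL}_2(F),\mathbb{Q})\oplus H_{n-1}(\mathrm{SL}_2(F),\mathbb{Q})$ that receives $H_{n-1}(F^*,\mathbb{Q})$ vanishes.

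Thus everything comes down to a vanishing range for $H_\bullet(\mathrm{SL}_2(F),\mathbb{Q})$ and $H_\bullet(\mathrm{PGL}_2(F),\mathbb{Q})$ when $F$ is a number field. For $\mathrm{SL}_2$ this is exactly the input entering \cite[Th.~5.1]{Knudson1}: by \cite[Th.~2.1]{BorelYangContCoh} one has $H_n(\mathrm{SL}_2(F),\mathbb{Q})=0$ for $n\geq 2r+3s+1$, the number $2r+3s=r\cdot\dim_{\mathbb{R}}\mathbb{H}^2+s\cdot\dim_{\mathbb{R}}\mathbb{H}^3$ being the dimension of the symmetric space of $\mathrm{SL}_2(F\otimes_{\mathbb{Q}}\mathbb{R})$. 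To deduce the same vanishing for $\mathrm{PGL}_2$ I would use the two reductions already in the paper: since $\mathbb{Q}$ is divisible by $2$, Lemma~\ref{lemma hom PSL2 and SL2} gives $H_n(\mathrm{PSL}_2(F),\mathbb{Q})\cong H_n(\mathrm{SL}_2(F),\mathbb{Q})$, and Lemma~\ref{lemma hom of PGL2 and PSL2} realizes $H_n(\mathrm{PGL}_2(F),\mathbb{Q})$ as a quotient (the $F^*/F^{*2}$-coinvariants) of $H_n(\mathrm{PSL}_2(F),\mathbb{Q})$; hence $H_n(\mathrm{PGL}_2(F),\mathbb{Q})=0$ for $n\geq 2r+3s+1$ as well.

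Combining the two items finishes the argument. For $n\geq 2r+3s+1$ the term left of $\partial$ vanishes, so $\partial\colon H_n(\tilde\Gamma,\mathbb{Q})\to H_{n-1}(F^*,\mathbb{Q})$ is injective; for $n\geq 2r+3s+2$ both $n$ and $n-1$ exceed the threshold, so the relevant stretch of the sequence collapses to $0\to H_n(\tilde\Gamma,\mathbb{Q})\xrightarrow{\partial}H_{n-1}(F^*,\mathbb{Q})\to 0$ and $\partial$ is an isomorphism. I do not expect a genuine obstacle: the substance is already packaged in Theorem~\ref{main teo 4} and imported wholesale from \cite{BorelYangContCoh}, mirroring the corresponding step of \cite[Th.~5.1]{Knudson1}. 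The only points meriting care are (i) pinning down the exact threshold in \cite[Th.~2.1]{BorelYangContCoh} --- vanishing strictly above $2r+3s$ versus already at $2r+3s$ --- since it fixes the constants $2r+3s+1$ and $2r+3s+2$ verbatim, and (ii) making sure the cited vanishing genuinely applies to a rank-one group, which is why I route the $\mathrm{PGL}_2$ case through $\mathrm{PSL}_2$ and $\mathrm{SL}_2$ via the paper's own lemmas rather than appealing to a statement about $\mathrm{PGL}_2$ directly.
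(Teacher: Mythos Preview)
Your proposal is correct and follows essentially the same route as the paper: feed the rational Mayer--Vietoris sequence of Theorem~\ref{main teo 4} with the vanishing $H_n(\mathrm{SL}_2(F),\mathbb{Q})=0$ for $n\geq 2r+3s+1$ (which the paper cites as \cite[Proof of Th.~5.1]{Knudson1}, itself relying on \cite{BorelYangContCoh}), and transfer that vanishing to $\mathrm{PGL}_2$ via Lemmas~\ref{lemma hom PSL2 and SL2} and~\ref{lemma hom of PGL2 and PSL2}. Your write-up is in fact more explicit than the paper's about how exactness yields the injectivity/bijectivity thresholds, but the argument is the same.
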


\begin{proof}
It follows from \cite[Proof of Th. 5.1]{Knudson1} that $H_n\big(\mathrm{SL}_2(F),\mathbb{Q}\big)=\lbrace 0 \rbrace$, for all $n \geq 2r+3s+1$.
Then, Lemma \ref{lemma hom PSL2 and SL2} together with Lemma \ref{lemma hom of PGL2 and PSL2} implies that $H_n\big(\mathrm{PGL}_2(F),\mathbb{Q}\big)=\lbrace 0 \rbrace$, for all $n \geq 2r+3s+1$.
Thus, the result follows from Th. \ref{main teo 4}.
\end{proof}



\bibliographystyle{plain}
\bibliography{refs.bib}

\end{document}